\newtheorem{thm}{Theorem}[section]
\newtheorem*{thm*}{Theorem}
\newtheorem{cor}[thm]{Corollary}
\newtheorem*{cor*}{Corollary}
\newtheorem{lem}[thm]{Lemma}
\newtheorem*{lem*}{Lemma}
\newtheorem{prop}[thm]{Proposition}
\newtheorem*{prop*}{Proposition}
\newtheorem*{thma}{Theorem A}
\theoremstyle{definition}
\newtheorem{defn}[thm]{Definition}
\newtheorem*{defn*}{Definition}
\newtheorem{conjecture}[thm]{Conjecture}
\newtheorem*{conjecture*}{Conjecture}
\newtheorem*{condition*}{Condition}
\newtheorem*{assumption*}{Assumption}
\theoremstyle{remark}
\newtheorem{rem}[thm]{Remark}
\newtheorem*{rem*}{Remark}
\newtheorem{example}[thm]{Example}
\newtheorem*{problem*}{Problem}
\DeclareMathOperator{\move}{move}
\DeclareMathOperator{\diag}{diag}
\DeclareMathOperator{\pfi}{PF}
\DeclareMathOperator{\npfi}{NPF}
\DeclareMathOperator{\area}{area}
\DeclareMathOperator{\red}{red}
\DeclareMathOperator{\dimv}{dinv}
\newcommand{\ratdyck}{\tilde{\pi}}
\newcommand{\areaseq}{a}
\newcommand{\uio}{D}
\newcommand{\coareaseq}{b}
\newcommand{\dleq}{\trianglelefteq}
\newcommand{\invo}{\iota}
\newcommand{\vanset}{\mathcal{A}}
\newcommand{\ddel}{\delta}
\newcommand{\vdinv}{d}
\newcommand{\nablat}[1]{\nabla'_{#1}}
\newcommand{\csf}{\mathcal{X}}
\newcommand{\csfa}{\xi}
\newcommand{\zgz}{\mathbb{Z}_{\geq 0}}
\newcommand{\bruleq}{\leq_{bru}}
\newcommand{\afflag}{\tilde{\mathcal{F}}}
\newcommand{\bF}{\mathbf F}
\newcommand{\BP}{\mathbb P}
\newcommand{\Bk}{\mathbf k}
\newcommand{\CE}{\mathcal E}
\DeclareMathOperator{\sort}{sort}
\newcommand{\trans}[1]{t(#1)}
\newcommand{\wpos}{W^+}
\newcommand{\ol}{\overline}
\newcommand{\aut}{\mathrm{aut}}
\newcommand{\frob}{\mathcal{F}}
\newcommand{\acts}{\righttoleftarrow}
\newcommand{\cO}{\mathcal{O}}
\newcommand{\ii}{a}
\newcommand{\labtocmp}{\alpha}
\DeclareMathOperator{\coarea}{coarea}
\DeclareMathOperator{\affsp}{X}
\DeclareMathOperator{\revl}{rev}
\DeclareMathOperator{\edges}{edges}
\DeclareMathOperator{\paff}{aff}
\DeclareMathOperator{\shuff}{Std}
\DeclareMathOperator{\Hilb}{Hilb}
\DeclareMathOperator{\Hom}{Hom}
\DeclareMathOperator{\Aut}{Aut}
\DeclareMathOperator{\weight}{weight}
\DeclareMathOperator{\Nilp}{Nilp}
\DeclareMathOperator{\pExp}{Exp}
\DeclareMathOperator{\trace}{Tr}
\DeclareMathOperator{\Ext}{Ext}
\DeclareMathOperator{\inv}{inv}
\DeclareMathOperator{\dinv}{dinv}
\newcommand{\pakstan}{{\mathbf w}}
\newcommand{\ratsp}[2]{\affsp_{#1,#2}}
\newcommand{\raths}[2]{\mathcal{H}_{#1,#2}(q,t)}
\newcommand{\ratfrob}[2]{\mathcal{F}_{#1,#2}[Y,X;q,t]}
\newcommand{\pardiag}{T}
\newcommand{\dyckpath}{\pi}
\newcommand{\Ht}{\tilde{H}}
\newcommand{\nlist}{\mathbf}
\newcommand{\an}{{\nlist{a}}}
\newcommand{\bn}{{\nlist{b}}}
\newcommand{\cn}{{\nlist{c}}}
\newcommand{\dn}{{\nlist{d}}}
\newcommand{\mn}{{\nlist{m}}}
\newcommand{\xn}{{\nlist{x}}}
\newcommand{\yn}{{\nlist{y}}}
\newcommand{\zn}{{\nlist{z}}}
\newcommand{\wn}{{\nlist{w}}}
\newcommand{\Z}{\mathbb{Z}}
\newcommand{\C}{\mathbb{C}}
\newcommand{\ppmap}{\phi}
\DeclareMathOperator{\image}{Im}
\DeclareMathOperator{\rank}{rank}
\DeclareMathOperator{\type}{type}
\DeclareMathOperator{\labs}{labs}
\author{Erik Carlsson, Anton Mellit}
\begin{document}
\title{A Combinatorial formula for the nabla operator}
\maketitle

\begin{abstract}

We present an LLT-type formula for a general power
of the nabla operator of \cite{bergeron1999science}
applied to the Cauchy product for the modified Macdonald
polynomials, and use it to deduce a new proof of the
generalized
shuffle theorem describing $\nabla^k e_n$ \cite{haglund2005combinatoriala,carlsson2018proof,mellit2016toric}, 
and the formula for $(\nabla^k p_1^n,e_n)$ from
\cite{elias2016computation,gorsky2017hilbert} as corollaries.
We give a direct proof of the theorem by 
verifying that the LLT expansion satisfies the
defining properties of $\nabla^k$,
such as triangularity in the dominance order,
as well as a geometric proof
based on a method for counting bundles on $\mathbb{P}^1$ due to the second author
\cite{mellit2017poincarea}.
These formulas are related to an affine paving of the 
type A unramified
affine Springer fiber studied by Goresky, Kottwitz,
and MacPherson in \cite{goresky2004unramified},
and also to Stanley's chromatic symmetric functions.
\end{abstract}

\section{Introduction}

There is a well-studied connection between the
combinatorics of the nabla operator of 
\cite{bergeron1999science,bergeron1999identities},
and the homology or cohomology of the
affine Springer fibers $\affsp_{\gamma}$ of the sort
studied in \cite{goresky2003purity},
see for instance
\cite{lusztig1991fixed,goresky2004unramified,gorsky2013compactified,gorsky2014torus,gorsky2014affine,hikita2014affine,oblomkov2014geometric,carlsson2018affine,kivinen2020unramified}. 
In this picture, objects such as parking functions $(\pi,w)$
are seen to
be in bijection with cells in an affine paving of $\affsp_\gamma$,
and combinatorial statistics such as $\dinv(\pi,w)$ 
that appear in the shuffle
theorem \cite{haglund2008catalan,haglund2005combinatoriala,carlsson2018proof} and other nabla-type formulas are essentially the dimensions of 
the corresponding cells. In this way, combinatorial formulas
may be interpreted as graded characters of the homology
of some $\affsp_\gamma$, with the $q$-degree representing
half the homological degree, the $t$-degree being more subtle.

For example, consider the following
power series
\begin{equation}
\raths{m}{n}=\label{eq:raths}
\frac{1}{(1-q)^{\gcd(n,m)}}\sum_{\substack{w\in \wpos_n
\\ \text{$w^{-1}$ is $m$-stable}}}
t^{\area(w)} q^{\dimv_m(w)}.
\end{equation}
Here $\wpos_n$ is a set of extended affine permutations
in which $w_i\geq 1$ for $1\leq i \leq n$, $v$ is
$m$-stable if $v_{i+m}> v_i$ for all $i$, and $\area$
and $\dinv_m$ are defined in 
Section \ref{sec:aff}.
A slightly different version of this series was
presented in \cite{gorsky2014affine} 
in the case when $n,m$ are relatively prime, which 
the authors showed describes the combinatorics of the rational
version of the shuffle theorem \cite{bergeron2016compositional,mellit2016toric}.
The corresponding Springer fiber in this case is
$\affsp_\gamma=\ratsp{n}{m}\subset \afflag_n$, which 
is the one associated to the nil-elliptic
operator $\gamma=N^m$, where $N(e_i)=e_{i+1}$ for $i\leq n$,
$N(e_n)=te_1$.

There is an extension of $\ratsp{n}{m}$ for $n,m$
not relatively prime, which in the case of 
$m=kn$, becomes the unramified affine Springer fiber
studied in \cite{goresky2004unramified}.
In this case, the equivariant homology 
$H_*^T(\affsp_{n,kn})$ for a standard
torus action $T\acts \affsp_{n,kn}$
is equipped with two commuting actions of the symmetric
group, generally known as the ``dot'' and ``star'' actions which
act on the left and right respectively,
due to Knutson and Tymoczsko
\cite{knutson2003schubert,tymoczko2008permutation}.
The dot action comes from a  space-level action
on the affine flag variety, which permutes different
fibers $\affsp_{n,kn}$, whereas the right action comes
from the Springer action.
In this paper, we present an LLT-type expansion
\[\Omega_k[X,Y]=
\sum_{[\mn, \an, \bn]}  \frac{t^{|\mn|} 
q^{\dinv_k(\mn, \an,\bn)}}{(1-q)^n \aut_q(\mn,\an,\bn)}
X_{\an} Y_{\bn},\]
for $k\geq 1$, 
where the quantities in the summand are defined in Section \ref{sec:comb}.
We predict that $\Omega_k[X,Y]$
corresponds to a Frobenius 
character extension of \eqref{eq:raths}, namely
$\Omega_k[X,Y]=\ratfrob{n}{kn}$,
where the $Y$-variables represent the dot action,
and the $X$-variables correspond to star.
In particular, the coefficient of the monomial
with all exponents equal to one in $\Omega_k[X,Y]$
is shown to agree with $\raths{n}{kn}$.

Our main theorem is that $\Omega_k[X,Y]$ is
computed by powers of the 
$\nabla$-operator applied to the Cauchy product
for the modified Macdonald polynomials, shown in plethystic
notation:
\begin{thma}
\label{thm:thma}
For $k\geq 1$, we have
\[\nabla^k e_n\left[\frac{XY}{(1-q)(1-t)}\right] 
= \Omega_k[X,Y].\]
\end{thma}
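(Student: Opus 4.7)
The plan is to follow the direct approach flagged in the introduction, verifying that $\Omega_k[X,Y]$ satisfies the characterizing properties of $\nabla^k$ applied to the modified Macdonald Cauchy kernel. By the Cauchy identity
\[
e_n\!\left[\frac{XY}{(1-q)(1-t)}\right]=\sum_{\mu\vdash n}\frac{\Ht_\mu[X]\,\Ht_\mu[Y]}{w_\mu},
\]
together with $\nabla\Ht_\mu=T_\mu\Ht_\mu$ (where $T_\mu=\prod_{s\in\mu} t^{a(s)}q^{l(s)}$), the left-hand side of Theorem A unfolds to
\[
\sum_{\mu\vdash n}\frac{T_\mu^k\,\Ht_\mu[X]\,\Ht_\mu[Y]}{w_\mu}.
\]
The proof therefore reduces to producing this same bi-Macdonald expansion for $\Omega_k[X,Y]$.

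I would first establish the structural features of $\Omega_k[X,Y]$ forced by its combinatorial definition: that it is a symmetric function in $X$ and separately in $Y$, and that it is invariant under the swap $X\leftrightarrow Y$ (via the involution $\an\leftrightarrow \bn$ on the summation domain). These symmetries are not immediate from the formula for $\dinv_k(\mn,\an,\bn)$ and will themselves need a careful combinatorial check, but once in hand they ensure that if $\Omega_k[X,Y]$ is diagonal in one variable in the $\Ht_\mu$-basis, it is automatically diagonal in both with matching coefficients.

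The heart of the proof is the promised triangularity in the dominance order, which I would use to pin the $X$-expansion into the $\Ht_\mu$-basis. Concretely, substitute $X\mapsto X(1-q)$ and show that the coefficient of $s_\lambda[X]$ in $\Omega_k[X(1-q),Y]$ vanishes unless $\lambda$ dominates a partition naturally extracted from $(\mn,\an)$. This matches the defining dominance triangularity of $\Ht_\mu[X(1-q)]$ in the Schur basis, forcing $\Omega_k[X,Y]=\sum_\mu\Ht_\mu[X]\Phi_\mu[Y]$ to have a triangular Macdonald-to-Macdonald transition in $Y$; the $X\leftrightarrow Y$ symmetry then collapses this to the desired diagonal form. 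The eigenvalue $c_\mu=T_\mu^k/w_\mu$ is identified by a leading-term computation at an extremal specialization (for instance a small alphabet that isolates one $\Ht_\mu$), where the combinatorial sum collapses to a single term whose $t^{|\mn|}q^{\dinv_k}$ weight must reproduce $T_\mu^k$.

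The main obstacle will be the triangularity step. Matching the combinatorial support of $\Omega_k[X(1-q),Y]$ to the dominance order calls for an involution or sign-reversing cancellation on the LLT data, analogous to the role played by the $\zeta$-map in the classical shuffle theorem proofs; tracking $\dinv_k$ through such a cancellation is where the delicate part of the argument lies. Once the triangularity and eigenvalue calculations are complete, the formulas for $\nabla^k e_n$ and for $(\nabla^k p_1^n, e_n)$ will follow immediately as the specializations advertised in the abstract.
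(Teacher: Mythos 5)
Your high-level strategy is the same one the paper's first proof uses: interpret $\Omega_k[X,Y]$ as defining an operator, and verify that it satisfies the characterizing properties of $\nabla^k$ (symmetry in $X\leftrightarrow Y$, triangularity in dominance order under the appropriate plethystic substitutions, and the correct leading coefficient). You correctly identify the reduction and the three ingredients.

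However, there is a genuine gap, and you flag it yourself: you defer the triangularity step to an unspecified ``involution or sign-reversing cancellation on the LLT data.'' That cancellation \emph{is} the proof; essentially everything else in Section~\ref{sec:proofa} is bookkeeping around it. Concretely, the paper first rewrites $\Omega_k[X(t-1),Y(q-1)]$ (note the substitution is performed simultaneously in \emph{both} alphabets, not just $X$ as you propose) as a signed sum over quadruples $(l,\an,\mn,\bn)$ in a combinatorial set $\vanset(n,k)$, where $l$ records the ``boundary'' introduced by the superization/standardization formula and the sign is $(-1)^l$. It then constructs an explicit involution $\invo_k$ via a ``move'' operation that transfers an entry across the dividing line at position $l$, and shows via Lemmas~\ref{lemma:tbound} and \ref{lemma:domb} that fixed points of $\invo_k$ can only occur when $\lambda(\an)\dleq\mu(\bn)'$ and that there is a unique fixed point on the diagonal $\lambda=\mu'$, yielding the correct leading term $q^{kn(\lambda')}t^{kn(\lambda)}$. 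Checking that $\move_i$ preserves $\vdinv_k$ and $|\mn|$ and that it is well-defined on $\vanset(n,k)$ is the nontrivial combinatorial work you would still need to supply, and tracking $\dinv_k$ through the move is exactly where the delicacy lies. Two smaller remarks: using $X\mapsto X(1-q)$ alone and Schur coefficients is a plausible variant, but the paper's two-sided substitution with monomial coefficients is what makes the quasi-symmetric expansion and the involution tractable; and the paper also gives a second, entirely independent proof (Section~\ref{sec:bundles}) by counting parabolic bundles on $\BP^1$ over a finite field, which you may find worth comparing, as it sidesteps the involution entirely.
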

Notice that unlike most combinatorial
formulas involving the nabla operator
formulas, the one in Theorem A completely 
determines $\nabla^k$, and could
therefore be taken as a definition.
This is the key point to our first
proof, which is done by verifying
that $\Omega_k[X,Y]$ satisfies the
defining properties of $\nabla^k$,
similar to the approach taken in 
\cite{haglund2005combinatorial}.
We give a second proof based on a
method developed by the second
author for counting bundles on 
$\mathbb{P}^1$ over a finite field,
which we hope will lead to further
connections with geometry and number theory.

We also deduce as corollaries some well-known
formulas involving the $\nabla$-operator, namely
the generalization of the shuffle theorem 
\cite{haglund2005combinatoriala,carlsson2018affine}
for arbitrary powers $\nabla^k e_n$, 
and the Elias-Hogancamp expression
for $\nabla^k p_1^n$ \cite{elias2016computation,gorsky2017hilbert}.
Our primary motivation for studying this formulas is
part of an ongoing study of Tor groups of
certainly polygraph-type modules, in connection with
the nabla positivity conjecture 
of Bergeron, Garsia, Haiman, and Tesler \cite{bergeron1999identities}, which predicts that
the coefficients of the Schur expansion of 
$\nabla^k s_\lambda$ are polynomials in $c_{\lambda,\mu}(q,t)$
whose coefficients are entirely positive or entirely negative,
which we will address in future papers.

\emph{Acknowledgments}. E. Carlsson was supported by
NSF DMS-1802371 during part of this project. A. Mellit was supported by the projects Y963-N35 and
P31705 of the Austrian Science Fund.

\section{Premilinary definitions and notations}
In this section we give general background on plethysm,
affine permuations, and the combinatorial 
constructions that appear in our main theorem.

\subsection{Macdonald polynomials}
\label{sec:mac}

Given a symmetric function $f$, we will adopt the usual plethysm
notation of $f[X]$ when $X$ is an element of some $\lambda$-ring,
so that $f[x_1+\cdots+x_N]$ is the substitution
$f(x_1,...,x_N)$. If $X=(x_1,x_2,...)$ is some alphabet, we
will use the same letter $X$ to denote the sum in plethystic formulas.
For details, we refer the
reader to \cite{Haiman01vanishingtheorems}.

Let $\Ht_\lambda=\Ht_\lambda(X;q,t)$ 
denote the modified Macdonald polynomial
\cite{bergeron1999identities}, defined by
\[\Ht_\lambda(X;q,t)=
t^{n(\lambda)} J_{\lambda}[X/(1-t^{-1});q,t^{-1}].\]
Let $\nabla$ be the Garsia-Haiman-Bergeron-Tesler operator
\begin{equation}
\label{eq:nabladef}    
\nabla \Ht_\lambda(X;q,t)=q^{n(\lambda')} t^{n(\lambda)} \Ht_\lambda(X;q,t),
\end{equation}
where
\[n(\lambda)=\sum_{i} (i-1)\lambda_i\]
is the usual statistic from Macdonald's book
\cite{macdonald1995symmetric}. 
In this paper, $\nabla$ will always denote an operator
applied to the $X$ variables.

\subsection{Combinatorial definitions}
\label{sec:comb}
Fix $n$ and define a label to be an
$n$-tuple of positive integers
$\an =(a_1,...,a_n)$ with $a_i\geq 1$. 
We will write $\labs(n)$ for the set of all labels of length
$n$, and will also call the individual $a_i$ labels.
For any label $\an$, we have a multiset 
$A=A(\an)=(|A|,m_A)$ where
$|A|=\{a_1,...,a_n\}$ is the total set,
and $m_A:|A|\rightarrow \Z_{\geq 1}$ is the multiplicity.
We define a (strict) composition of $n$
\[\labtocmp(\an)=(\alpha_1,...,\alpha_{l}),\quad
|A|=\{c_1<\cdots < c_l\},\quad \alpha_i=m_A(c_i).\]
In other words, $\alpha(\an)$ is the result of sorting $\an$ in increasing order, and reading of the sizes of the groups, for instance
\[\alpha((1,1,1,4,4,2,1,4))=(4,1,3).\]
We may also define the corresponding partition 
$\mu(\an)=\mu(\alpha(\an))$ which
is the result of sorting $\alpha(\an)$ in decreasing order, so
$\mu(\an)=(4,3,1)$ in the above example.
Given a multiset $A$, let $\labs(A)$ denote the 
set of labels $\an$ with $A(\an)=A$, with similar
definitions for $\labs(\alpha)$ and $\labs(\mu)$.

If $\mathcal{A},\mathcal{B},...$ are totally ordered sets, 
we define the ordering on $\mathcal{A}\times \mathcal{B}\times \cdots $
as the corresponding lexicographic order, breaking ties from left
to right. If $\an \in \mathcal{A}^n$, $\bn \in \mathcal{B}^n$,... 
are some elements, we define $[\an,\bn,...]$ to be the
sorted representative of the simultaneous action of $S_n$ on
all components.
In other words, view $(\an,\bn,...,)$ as a matrix,
transpose the matrix, sort according to the order on 
$\mathcal{A}\times \mathcal{B}\times \cdots $, and transpose back.
For instance, in the case $\an\in \mathcal{A}^n$,
$\bn\in \mathcal{B}^n$ for $\mathcal{A}=\mathcal{B}=\mathbb{Z}_{\geq 1}$,
we have
\[[(1,2,1,1,2,1),(3,2,3,1,1,3)]=((1,1,1,1,2,2),(1,3,3,3,1,2)).\]
We can then define $\alpha(\an,\bn,...)$ using the same rules as
above, so in the above example $\alpha(\an,\bn)=(1,3,1,1)$.
We make a similar definition for $\mu$, which also applies when 
the sets are unordered.

\subsection{The dinv statistic}
\label{sec:dinv}

Let $\an,\bn$ be labels,
let $\mn \in \mathbb{Z}_{\geq 0}^n$, 
with the decreasing order on the $m_i$,
so that a triple $[\mn,\an,\bn]$ means one sorted as in the following way.
\begin{defn} 
\label{def:mab}
Let $\mn \in \zgz^n$ and let $\an,\bn$ be labels.
We will say that $(\mn,\an,\bn)$ is sorted if
for every $i<j$ we have
\begin{enumerate}
    \item $m_i \geq m_j$, and
\item if $m_i=m_j$ then $a_i\leq a_j$, and
\item if $m_i=m_j$ and $a_i=a_j$ then $b_i\leq b_j$.
\end{enumerate}
\end{defn}
For instance,
\[[(1,0,1,0),(2,1,1,1),(1,2,2,1)]=((1,1,0,0),(1,2,1,1),(2,1,1,2)).\]
We will often write such lists as arrays, as in
Example \ref{ex:dinvdp} below.

We now define a statistic $\dinv_k(\mn,\an,\bn)$ on triples
which are sorted according to Definition \ref{def:mab}. 
\begin{defn}
\label{defn:dinv}
Let $\mn\in \mathbb{Z}_{\geq 0}^n$,
let $\an,\bn\in \labs(n)$ be labels,
and suppose that $(\mn,\an)$ are sorted. We define
\begin{subequations}
\label{eq:dinvdef}
\begin{equation}
    \label{eq:dinvdefa}
    \dinv_k(\mn,\an,\bn)=\sum_{i<j} \dinv^{i,j}_k(\mn,\an,\bn)
\end{equation}
where
\begin{equation}
\label{eq:dinvdefb}    
\dinv_k^{i,j}(\mn,\an,\bn)=\max\left(
m_j-m_i-1+k+\delta(a_i>a_j)+\delta(b_i>b_j),0\right),
\end{equation}
\end{subequations}
and $\delta(a_1>a_2)$ is one if $a_1>a_2$, zero otherwise.
\end{defn}
We similarly define $\dinv_k(\mn,\an)$ as the result
of removing
$\delta(b_i>b_j)$, which is the
same as setting $\bn=(1^n)$
by default.

Recall that a Dyck path is a path 
of North and East steps in the 
$n\times n$ grid beginning at the origin
$(0,0)$, placed in the South-West,
or lower left corner, and ending
at $(n,n)$, which never goes
below the diagonal. It is determined
uniquely by the set
\[\uio(\pi)=\left\{
(i,j) : \text{$1\leq i<j\leq n$
is between the path and the diagonal}
\right\}.\]
\begin{defn}
\label{defn:dinv1}
Fix $k\geq 0$, 
suppose $(\mn,\an)$ is sorted, and let $i<j$.
We will say that $i$ $k$-attacks $j$ 
(or just attacks) if
\[m_j-m_i-1+k+\delta(a_i>a_j)\geq 0.\]
\end{defn}
In other words, $i$ $k$-attacks $j$ if
switching the order of $b_i,b_j$ has an effect 
on $\dinv_k$.
For instance, for $k=1$ we have
that $i$ attacks $j$ if
\begin{enumerate}
    \item $m_i=m_j+1$ and $a_i>a_j$, or
\item $m_i=m_j$.
\end{enumerate}
Note that in the second condition,
we necessarily have $a_i\leq a_j$.
\begin{defn}
\label{def:pima}
    Let $\dyckpath=\dyckpath_k(\mn,\an)$ denote the Dyck path such that the elements of 
    $\uio(\dyckpath)$, 
are the pairs $i<j$ for which 
$i$ $k$-attacks $j$.
\end{defn}
We now have that
\begin{equation}
\label{eq:dinv}
\dinv_k(\mn, \an, \bn) = 
\dinv_k(\mn,\an)+\inv_{\dyckpath_k(\mn,\an)}(\bn)
\end{equation}
where
\begin{equation}
    \label{eq:invpi}
  \inv_\dyckpath(\bn)  =
    \#\{(i,j)\in \uio(\dyckpath) : b_i>b_j\}.
\end{equation}

\begin{example}
\label{ex:dinvdp}
Let $\mn,\an$ be given in array notation by
\[\left(\begin{array}{c|cccccc}
\mn&3&3&3&2&0&0\\
\an&1&1&5&4&2&5\\ 
\end{array}\right),\]
which is a sorted term for $n=6$. Then we find that
$\pi_2(\mn,\an)$ is the Dyck path given in Figure \ref{fig:dyckpath66},
as the attacking pairs are the elements of $\uio(\pi)$ listed in the caption.
\end{example}
\begin{figure}
\begin{centering}
\begin{tikzpicture}
\draw[help lines] (0,0) grid (6,6);
\draw[dashed,color=gray] (0,0)--(6,6);
\draw[-,very thick] (0,0)--(0,1);
\draw[-,very thick] (0,1)--(0,2);
\draw[-,very thick] (0,2)--(0,3);
\draw[-,very thick] (0,3)--(0,4);
\draw[-,very thick] (0,4)--(1,4);
\draw[-,very thick] (1,4)--(2,4);
\draw[-,very thick] (2,4)--(3,4);
\draw[-,very thick] (3,4)--(3,5);
\draw[-,very thick] (3,5)--(4,5);
\draw[-,very thick] (4,5)--(4,6);
\draw[-,very thick] (4,6)--(5,6);
\draw[-,very thick] (5,6)--(6,6);
\end{tikzpicture}
\caption{A Dyck path of size $(6,6)$
with area sequence
$\areaseq(\pi)=(0, 1, 2, 3, 1, 1)$,
and $\uio(\pi)=\{(1,2),(1,3),(1,4),(2,3),(2,4),(3,4),(4,5),(5,6)\}$.}
\label{fig:dyckpath66}
\end{centering}
\end{figure}
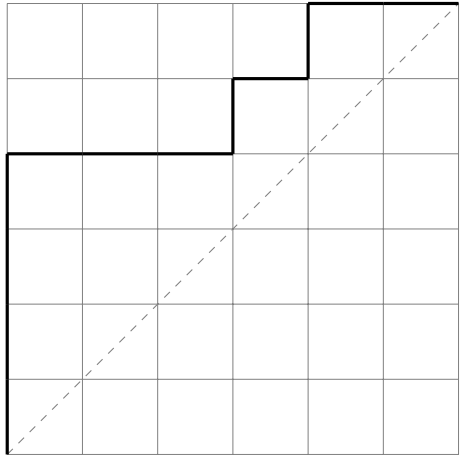

\subsection{Examples}
\label{sec:combex}
A sum over all $\an$ will mean the infinite sum
over all labels, unless some upper bound is specified, $a_i\leq N$.
We will adopt a convenient convention that a sum over
$[\an,\bn,...]$ means a sum over cosets, with the assumption that
$(\an,\bn,...)$ is the sorted representative in the summand.
We will also allow for some summands in which only some of the
summands are grouped, which means that just those terms are sorted. For instance, the symbol
\[\sum_{[\an,\bn],\cn,[\dn]}\cdots\]
indicates the sum over quadruples $(\an,\bn,\cn,\dn)$ so that
for every $i<j$ we have
$a_i\leq a_j$, $b_i\leq b_j$ if $a_i=a_j$, $d_i\leq d_j$,
and there are no constraints on $\cn$.
We also define automorphism factors for the cosets
\[\aut(\an,\bn,...)=\prod_i \mu_i!,\quad \aut_q(\an,\bn,...)
=\prod_i [\mu_i]_q!,\]
where $\mu=\mu(\an,\bn,...)$, and 
\[[k]_q=1+q+\cdots q^{k-1},\quad
[k]_q!=\prod_{j=1}^k [j]_q,\]
are the $q$-number and $q$-factorial.

We give some examples in symmetric functions. Let
\[X_\an=x_{a_1}\cdots x_{a_n}=\prod_{a\in A} x_a^{m(a)}\]
be the associated monomial to $\an$, where $(A,m)$ is the 
associated multiset.

\begin{example}
\label{ex:complete1}
The complete and monomial symmetric functions are given by
\[h_n(x_1,x_2,...)=
\sum_{\an} \frac{1}{\aut(\an)} X_{\an}=
\sum_{[\an]} X_{\an}=\sum_{\mu} m_\mu(x_1,x_2,...),\] 
\[m_{\mu}(x_1,x_2,...)
=\frac{1}{\mu_1!\cdots \mu_l!}
\sum_{\mu(\an)=\mu} X_{\an}.\]
We also have the quasi-symmetric monomials defined by
\[m_\mu=\sum_{\mu(\alpha)=\mu} M_\alpha,\quad
M_\alpha(x_1,x_2,...)
=\frac{1}{\alpha_1!\cdots \alpha_l!} \sum_{\alpha(\an)=\alpha}
X_{\an}.\]
\end{example}
\begin{example}
We have
\[e_n\left[\frac{X}{1-q}\right]=
\sum_{[\an]} \frac{q^{n(\mu(\an)')}}{(1-q)^n \aut_q(\an)} X_{\an},\]
which follows from using the Cauchy product, and
the well-known specializations for 
$h_\mu(1,q,...)$. Replacing $e_n$ with $h_n$
simply removes the $q^{n(\mu(\an)')}$ factor.
\end{example}

\begin{example}
\label{ex:cauchymab}
We have the Cauchy product for the modified Macdonald polynomials
from Section \ref{sec:mac}:
\begin{equation}
\label{eq:hnqt}    
e_n\left[\frac{XY}{(1-q)(1-t)}\right]=
\sum_{[\mn,\an,\bn]} \frac{t^{|\mn|}q^{n(\mu(\an)')}}{(1-q)^n \aut_q(\mn,\an,\bn)}
X_{\an}Y_{\bn}.
\end{equation}
The order we chose for 
$\mn$ does not affect the answer here,
but it does make a difference in
Theorem \ref{thm:thm1},
which is about powers of the nabla operator applied to this
expression. Again, replacing $e_n$ with $h_n$
simply removes the $q^{n(\mu(\an)')}$.
\end{example}

\section{Motivation}
We explain the underlying motivation behind Theorem A,
which was discovered experimentally
using conjectural relations between Haiman's polygraph rings
and the homology of the unramified affine Springer fiber
$H^T_*(\affsp_{n,kn})$.
We connect the combinatorics of Section \ref{sec:comb} to 
cells in $\affsp_{n,kn}$.

\subsection{Affine permutations}
\label{sec:aff}
We describe the connection between affine permutations
and the combinatorics of the $\dinv$ statistic and rational
slope parking functions, following \cite{gorsky2014affine}.

Define the set of positive affine permutations as
\[\wpos_n=\left\{w: \mathbb{Z}\rightarrow \mathbb{Z}:
w(i+n)=w(i)+n, \mbox{$w_i\geq 1$ for $i\geq 1$}\right\}.\] 
Each one is determined by its values in window notation,
$w=(w_1,...,w_n)$.
Notice that these all have positive numbers as entries,
and are not normalized in the usual way so that the sum 
of the entries is $n(n+1)/2$, which is a condition for 
affine $SL_n$ as opposed to $GL_n$. 
Let
\[\wpos_{n,d}=\left\{w\in \wpos_{n} :w_1+\cdots+w_n=dn+n(n+1)/2\right\}.\]
We may still multiply any two such permutations, which results
in adding the values of $d$. For each $d$ we have the Bruhat
order $\leq_{bru}$ on $\wpos_{n,d}$. 

Following \cite{gorsky2014affine}, we have
\begin{defn}
\label{def:stable}
An affine permutation is called $m$-stable if
$w_{i+m}>w_i$ for all $i$, and is called $m$-restricted
if $w^{-1}$ is $m$-stable. 
\end{defn}
These are the fixed points of the affine Springer fiber
$\affsp_{n,m}=\affsp_{\gamma}$
of the type studied in \cite{goresky2003purity},
in which $\gamma$ is the topologically nilpotent operator
\[\gamma (e_i)=a_{i} e_{i+m}.\]
Here $e_i \in \C^n((t))$ is the standard basis vector in $\C^n$
for $1\leq i \leq n$, and otherwise $e_{i+n}=te_i$,
and the $a_i$ are distinct nonzero complex numbers for for 
$1\leq i \leq d$, and $a_{i+d}=a_i$ for $d=\gcd(n,m)$.
For $m=kn$, we have
$\gamma=\diag(a_1t^k,...,a_nt^k)$,
corresponding to the unramified
case studied in \cite{goresky2004unramified}.
In this case the 
$n$-dimensional torus $T\subset GL_n(\C)$ acts by multiplication
on the left via, as well as the extended $(n+1)$-dimensional torus
$\widetilde{T}$, which includes loop rotation, both having discrete
fixed points described by Definition \ref{def:stable}.

For integers $a,b \in \Z$ which are not congruent modulo $n$,
we have an affine transposition $t_{a,b}$ which switches the two. 
Given an $m$-restricted permutation, let
\begin{equation}
    \edges_m(w)=
    \left\{t_{a,b} : t_{a,b} w\leq_{bru}w,\ 
    |a-b|<m\right\}.
\end{equation} 
The statistic $|a-b|$ does not
depend on the representatives $a,b$ or their order,
and is called the height of the 
transposition.
The set $\edges_m(w)$ represent directed edges 
$w\rightarrow v$ with $v=t_{a,b}w$ in the GKM graph
of $\affsp_{n,m}$, corresponding to the one-dimensional
orbits under $\widetilde{T}$.

Recall that an $(n,m)$-rational slope Dyck path is one that begins
at $(0,0)$ and ends at $(m,n)$, never crossing the
line of slope $n/m$. Again, we have the area and coarea sequences
$\area(\pi),\coarea(\pi)$,
and also $\uio(\pi)$.
For any $m$-restricted permutation $w$, 
there is a rational $(n,m)$-Dyck path
with coarea sequence
\[\coareaseq(\ratdyck_{m}(w))=\sort(\pakstan_m(w),<),\]
where 
\[\pakstan_m(w)_j =\#\left\{t_{a,b} \in \edges_m(w):
w^{-1}t_{a,b}w=t_{i,j} \text{ for some $i<j$}\right\}.
\]
This is the underlying Dyck path of sequence 
$\pakstan_m(w)=\mathcal{PS}_{w^{-1}}$ of \cite{gorsky2014affine},
which is shown to define a bijection from the set of 
$m$-stable affine permutations in $W_n$
to rational parking functions for $(n,m)$ coprime.

We define
\begin{defn}
    \label{def:dimv}
    Let
    \[\dimv_m(w)=\area(\pi_{n,m})-\#\edges_m(w)=\area(\ratdyck_m(w)),\]
    where $\pi_{n,m}=(1^n0^m)$ is the
    $(n,m)$-Dyck path of maximal area.
\end{defn}

We now explain the connection with the
statistic $\dinv_k(\mn,\an,\bn)$ of Section \ref{sec:comb}.
Recall the definition of 
Standardization from \cite{haglund2005combinatorial}:
\begin{defn}
\label{def:standardization}
The standardization of a label
is the unique permutation $\sigma=\shuff(\an)$ such that
$\an_{\sigma^{-1}}$ is weakly increasing, and
the restriction of $\sigma$ to $\an^{-1}(\{x\})$
is increasing if $x$ is positive, decreasing
if $x$ is negative. 
\end{defn}
We will also define $\shuff_<(\an)$
and $\shuff_>(\an)$ with respect to the usual, and reverse
order on $\mathbb{Z}_{\geq 1}$, so that $\shuff=\shuff_<$.
For instance, if $\an=(3,3,3,1,2,3,1)$, then 
\[\shuff_<(\an)=
(4, 5, 6, 1, 3, 7, 2),\quad 
\shuff_>(\an)=
(1, 2, 3, 6, 5, 4, 7).\]
In particular, $\dinv_k$ respects
standardization, i.e.
$\dinv_k(\mn,\shuff(\an),\shuff(\bn))=
\dinv_k(\mn,\an,\bn)$.

Now given a tuple $(\mn,\an,\bn)$ which is sorted, we
define an affine permutation
\begin{equation}
    \label{eq:paff}
    \paff(\mn,\an,\bn)=
    \shuff_{>}(\revl(\bn)) \trans{\mn}
    \shuff_{<}(\an)^{-1},
\end{equation}
where
$\trans{\mn}=(n+m_1 n,...,1+m_nn)$ is the maximal representative
of its coset in $S_n\backslash \wpos_n /S_n$,
and $\revl(\bn)$ is the result of writing
$\bn$ in the reverse order.
We similarly define $\paff(\mn,\an)$ as the left coset
$S_n \paff(\mn,\an,\bn)$, which is independent of $\bn$.
The proof of the following proposition is tedious,
and will be omitted.
\begin{prop}
\label{prop:paff}
Fix multisets $A,B$ of size $n$
with $|A|,|B|\subset \mathbb{Z}_{\geq 1}$.
Then 
$\paff(\mn,\an,\bn)$ defines a bijection from the set of sorted triples 
\[(\mn,\an,\bn)\in \zgz^n \times \labs(A)\times \labs(B)\]
to the double coset 
$S_{\revl(\alpha(B))}\wpos_n S_{\alpha(A)}$,
where $S_\alpha$ is the Young subgroup,
and $\paff(\mn,\an,\bn)$ is the unique representative of its
double coset of maximal length.
Moreover, we have
\[\dinv_k(\mn,\an,\bn)=
\dimv_{kn}\left(\paff(\mn,\an,\bn)\right),\]
and the Dyck path is determined by
\[\areaseq(\dyckpath_k(\mn,\an))=\areaseq(\ratdyck_{kn}(w_{min}))-
\areaseq(\ratdyck_{kn}(w_{max})),\]
where $w_{min},w_{max}$ are the unique representatives of the
coset $\paff(\mn,\an)$ which are minimal and maximal in the Bruhat order.
\end{prop}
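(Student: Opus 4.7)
The plan is to treat the three assertions of the proposition as distinct steps, building on the observation that $\paff(\mn,\an,\bn)$ is essentially a parabolic double coset parameterization of extended affine permutations, dressed up by the standardization maps so that the window statistics match $\dinv_k$.

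First I would establish the double coset picture. The element $\trans{\mn}$ with window values $((n-i+1)+m_i n)_{i=1}^n$ lies in $\wpos_n$, and because $\mn$ is weakly decreasing these window values decrease within each block of constant $m_i$; this is exactly the condition for $\trans{\mn}$ to be the longest element of its $S_n\times S_n$ double coset. Conjugating by standardization permutations on the two sides refines this to a $S_{\revl(\alpha(B))}\times S_{\alpha(A)}$ double coset: the sorting conditions of Definition \ref{def:mab} say precisely that $\shuff_<(\an)$ lies in the minimum-length coset representative of $S_{\alpha(A)}$ and $\shuff_>(\revl(\bn))$ lies in the minimum-length representative of $S_{\revl(\alpha(B))}$, up to the ties that are resolved by the choices in standardization. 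Bijectivity and the maximality of $\paff(\mn,\an,\bn)$ in its double coset then follow by a length computation: the left and right multiplications add inversions, and the sortedness constraints prevent any further length-increasing moves within the cosets.

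Second, for the identity $\dinv_k(\mn,\an,\bn)=\dimv_{kn}(\paff(\mn,\an,\bn))$, I would decompose both sides into contributions from pairs $i<j$ in the window. On the combinatorial side, $\dinv_k^{i,j}$ as defined in \eqref{eq:dinvdefb} is already a pairwise contribution. On the affine side, one writes $\dimv_{kn}(w)=\area(\pi_{n,kn})-\#\edges_{kn}(w)$ and groups the affine transpositions $t_{a,b}$ of height $<kn$ in $\edges_{kn}(w)$ according to the pair of window indices $(i,j)$ they conjugate to under $w$. The key calculation is that, for the specific shape of $w=\paff(\mn,\an,\bn)$, the window values differ in the $i$-th and $j$-th slots by $(m_i-m_j)n + (j-i)$ adjusted by the standardizations of $\an$ and $\bn$, and the resulting count of descending affine transpositions of height less than $kn$ matches $\dinv_k^{i,j}$ up to the constant contribution from $\area(\pi_{n,kn})$. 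The $\max(\cdot,0)$ in Definition \ref{defn:dinv} corresponds exactly to truncating heights outside the strip $|a-b|<kn$.

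Third, the Dyck-path identity is essentially a corollary of the previous step, since both $\ratdyck_{kn}(w_{\min})$ and $\ratdyck_{kn}(w_{\max})$ are constructed from the same recipe of \cite{gorsky2014affine} applied to representatives of the left coset $\paff(\mn,\an)$. Passing from $w_{\max}$ to $w_{\min}$ removes exactly the descents coming from the $\bn$-standardization on the left, and what remains are precisely the pairs $i<j$ counted by Definition \ref{defn:dinv1}. Matching area sequences entry by entry then reduces to the pairwise count already carried out in the previous step.

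The main obstacle is the pair-by-pair matching in the second step: three layers of data (the multiplicities $\mn$, the labels $\an$ controlling the right standardization, and the labels $\bn$ controlling the left) interact nontrivially with the parabolic structure of the double coset, and one must carefully track which affine transpositions $t_{a,b}$ with $|a-b|<kn$ are actually Bruhat-less than $w$. This is what makes the proof tedious; once the bookkeeping is carried out on a pair of window slots, the global identity follows by summation, and parts (a) and (c) are essentially formal consequences.
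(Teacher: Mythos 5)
The paper explicitly omits the proof of this proposition, calling it ``tedious,'' so there is no reference argument to compare your proposal against; it has to be judged on its own merits. The overall structure you propose is the natural one and I believe correct: show $\trans{\mn}$ is the longest element of its $S_n\backslash\wpos_n/S_n$ double coset, interpret the standardization permutations as minimal parabolic coset representatives so that $\paff$ selects the longest element of the refined $S_{\revl(\alpha(B))}\backslash\wpos_n/S_{\alpha(A)}$ double coset, then decompose both $\dinv_k$ and $\dimv_{kn}$ into pairwise contributions and match them window slot by window slot. You correctly locate the crux of the argument in this pairwise bookkeeping.

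That said, nearly every substantive claim in your sketch is asserted rather than derived, and some of them need more than a wave at ``a length computation.'' For the maximality of $\trans{\mn}$, the strictly decreasing window gives $\trans{\mn}s_i<\trans{\mn}$ for all $i$, but you also need $s_i\trans{\mn}<\trans{\mn}$, which requires computing $\trans{\mn}^{-1}(i)=(n-i+1)-m_{n-i+1}n$ and using that $\mn$ is weakly decreasing; you only address the right descents. Bijectivity onto the set of double cosets (well-definedness, injectivity, and surjectivity) is not argued at all. For the identity $\dinv_k(\mn,\an,\bn)=\dimv_{kn}(w)$, the needed pairwise statement is $\dinv_k^{i,j}=k-\#\edges^{\bar\imath,\bar\jmath}_{kn}(w)$ for a suitable matching of triple-indices to residue pairs; this requires the a priori bound $\#\edges^{\bar\imath,\bar\jmath}_{kn}(w)\leq k$ (to be compatible with the $\max(\cdot,0)$ in Definition \ref{defn:dinv}), the fact that $\area(\pi_{n,kn})=k\binom{n}{2}$, and a careful account of how the two standardization permutations alter the window-value differences $(j-i)+(m_i-m_j)n$ of $\trans{\mn}$; none of this is spelled out. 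Finally the third part is where your sketch is thinnest: $\areaseq(\ratdyck_{kn}(w_{\min}))$ and $\areaseq(\ratdyck_{kn}(w_{\max}))$ come from \emph{sorted} sequences of the $\pakstan$ statistic, so an entrywise difference of area sequences is not automatically a pairwise comparison, and one must argue that the two sorts and the attacking-pair count align position by position. The phrase ``removes exactly the descents coming from the $\bn$-standardization'' gestures at the right idea but does not establish this alignment.
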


\begin{example}
Take $(\mn,\an,\bn)=((2,1,0,0),(2,3,1,1),(1,2,1,1))$, which is sorted.
Then we have
\[\trans{\mn}=(12,7,2,1),\quad \shuff_<(\an)=(3, 4, 1, 2),\quad \shuff_>(\revl(\bn))=(2,3,1,4),\]
which gives $w=\paff(\mn,\an,\bn)=(3,2,12,5)$. 
This is the maximal length element in the double coset
\[S_{(1,3)} w S_{(2,1,1)}=\left\{
(2, 3, 12, 5), (2, 4, 11, 5),\right.\]
\[\left.(3, 2, 12, 5), (3, 4, 10, 5), (4, 2, 11, 5),
    (4, 3, 10, 5)\right\}.\]
Now for $k=1$, we have
\[\edges_{4}(w)=
\left\{
(2, 3, 12, 5), (3, 2, 9, 8), (4, 2, 11, 5), (3, 4, 10, 5)
\right\}\]
so that
$\dimv_{4}(w)=6-4=2$.
On the other hand, $(\mn,\an)$ has
three attacking pairs, $\left\{(2,3),(2,4),(3,4)\right\}$.
Since $(2,3)$ and $(2,4)$ are the pairs for which $b_i>b_j$,
we see that $\dinv_1(\mn,\an,\bn)=2$, 
in agreement with Proposition \ref{prop:paff}.
\end{example}

\begin{example}
Let us compute the Dyck path
for the terms $\mn,\an$ from Example
\ref{ex:dinvdp}, using Proposition \ref{prop:paff}.
Then we have that
\[w_{min}=(19, 20, 5, 16, 21, 6), \quad
w_{max}=(24, 23, 2, 15, 22, 1)\]
 are the minimal and maximal representatives of the left coset
 of $\paff(\mn,\an)\in S_n\backslash \wpos_n$.
 Then
 \[\areaseq(\pakstan_{12}(w_{min}))=(0, 2, 4, 4, 1, 2),
 \quad \areaseq(\pakstan_{12}(w_{max}))=(0, 1, 2, 1, 0, 1),\]
and the unique $(n,n)$-Dyck path
 whose area sequence is the difference $(0, 1, 2, 3, 1, 1)$
 is the expected one from Figure \ref{fig:dyckpath66}.
\end{example}

\subsection{Polygraphs and the Hilbert scheme}
\label{sec:sf}
If $M$ is a representation of $S_n\times \cdots \times S_n$ with
$k$ factors,
we will denote the Frobenius character by 
\[\frob_{X_1,...,X_k} M \in \C[x_{i,j}]^{S_n\times \cdots \times S_n},\]
which is a function in $k$ sets of variables,
$X_i=(x_{i,1},x_{i,2},...)$, individually symmetric in 
each one. 
For doubly graded modules, 
the Frobenius character encodes
the degrees with the $q,t$ variables, namely
\[\frob M=\sum_{i,j} q^it^j \frob M^{(i,j)}\]
where $M^{(i,j)}$ is
the homogeneous component of the bigrading.

In Haiman's theory \cite{Haiman01vanishingtheorems},
the sum $\Omega_k[X,Y]$ is the equivariant index
of a sheaf on the Hilbert scheme of points
in the complex plane $\Hilb_n \C^2$, with respect to 
the usual torus action $T=\{(q^{-1},t^{-1})\}\acts \Hilb_n \C^2$.
Let $P$ be the Procesi bundle of rank $n!$
whose fibers carry an action of $S_n$ isomorphic to the
regular representation.
The modified Macdonald polynomial is the Frobenius
character
$\Ht_\lambda=\frob P\big|_{\lambda}$ of the fibers 
of torus-fixed points, which are
isomorphic to the Garsia-Haiman module. Then we have
\begin{equation}
    \label{eq:nabcauchy}
     \nabla^k e_n\left[\frac{XY}{(1-q)(1-t)}\right]=
    \sum_{i}(-1)^i \frob_{Y,X} R^i\Gamma\left(P\otimes P^{*}\otimes \mathcal{L}^k\right).
\end{equation}

Now recall Haiman's polygraph modules \cite{haiman2001hilbert}:
Fix $n$ and let $\xn$ denote the set of variables $(x_1,...,x_n)$,
for some variable $x$. 
Let $\C[\xn,\yn]\cdot S_n$ denote the free left $\C[\xn,\yn]$-module
with one free generator for each permutation $\tau\in S_n$.
Consider the following variant of Haiman's map from \cite{haiman2001hilbert}
equation (152):
\begin{equation}
\label{eq:ppmap}
  \ppmap : 
  \C[\xn,\yn,\zn,\wn]\rightarrow \C[\xn,\yn]\cdot S_n,\quad
g(\xn,\yn,\zn,\wn)\mapsto \sum_{\tau \in S_n} g(\xn,\yn,\tau(\xn),\tau(\yn))\tau.
\end{equation}
We define a module $M$ as the image of
$\ppmap$, 
as a $\C[\xn,\yn]$-module.
We have the usual bidgrading on $M$ compatible with
the grading on the ring $\C[\xn,\yn,\zn,\wn]$,
in which the degree of the $\xn,\zn$ variables are
$(1,0)$, and the $\yn,\wn$ variables have degree $(0,1)$.
Note that $\xn,\yn$ have nothing to do with the symmetric function variables $X,Y$.

There is an action of $S_n\times S_n$ on $M$, which may
also be interpreted as a
commuting left and right action by
\[(\sigma_1,\sigma_2)\cdot f(\xn,\yn)\tau=
\sigma_1 \cdot f(\xn,\yn)\tau \cdot \sigma_2^{-1}=
  f(\sigma_1(\xn),\sigma_1(\yn)) \left(\sigma_1 \tau \sigma_2^{-1}\right).\]
Then $\ppmap$ intertwines this action with the 
one where the first factor simultaneously permutes $\xn,\yn$, and the 
second factor permutes $\zn,\wn$.
Notice that the left $S_n$-action on $M$ is compatible with the
action by permuting the variables, whereas the right $S_n$-action
does not act on the variables.
Another way to say this is that $M$ is a bigraded
module over the smash product $\C[\xn,\yn]\rtimes S_n$,
which is the noncommutative ring by adjoining a generator for each 
$\sigma \in S_n$ with the relation
\[\sigma x_i= x_{\sigma_i} \sigma,\quad 
\sigma y_i= y_{\sigma_i},\]
and that the right action of $S_n$ acts by
automorphisms of $M$.
\begin{conjecture}
\label{conj:bkr}
As a module over $\C[\xn,\yn] \rtimes S_n$, 
$M$ is the image of
the Procesi bundle $P$ under
the Haiman-Bridgeland-King-Reid isomorphism
\[F \mapsto R\Gamma_{\Hilb_n}(P\otimes F).\]
The higher derived functors $R^i\Gamma(P\otimes P)$
vanish, and so $M\cong \Gamma_{\Hilb_n}(P\otimes P)$.
Moreover, we have that $M$ is free 
when regarded as a module
over $\C[\xn]$, in other words forgetting the
$\C[\yn]$-action.
\end{conjecture}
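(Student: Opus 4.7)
The plan is to identify $M$ with the coordinate ring of a concrete subvariety of $(\C^2)^{2n}$ and then bridge to the sheaf-theoretic side via Haiman's work on the isospectral Hilbert scheme.

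First I would compute $\ker \ppmap$. A polynomial $g(\xn,\yn,\zn,\wn)$ lies in this kernel precisely when $g(\xn,\yn,\tau(\xn),\tau(\yn))$ vanishes identically for every $\tau \in S_n$, so $\ker\ppmap$ is the ideal of the reduced subscheme
\[
V \;=\; \image\bigl(S_n \times (\C^2)^n \longrightarrow (\C^2)^{2n},\ (\tau,(\xn,\yn)) \longmapsto (\xn,\yn,\tau(\xn),\tau(\yn))\bigr).
\]
Geometrically $V$ is the union of the $n!$ graphs of the diagonal $S_n$-action on $(\C^2)^n$, and it sits inside the Haiman polygraph $Z(n,n)$ of \cite{haiman2001hilbert} as the component indexed by bijective functions $[n]\to[n]$. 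Consequently $M \cong \C[V]$ as a bigraded $\C[\xn,\yn]\rtimes S_n$-module, with the right $S_n$-action coming from the $S_n$-factor in the parameterization.

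Second, for freeness of $M$ over $\C[\xn]$, I would invoke Haiman's Polygraph Theorem from \cite{haiman2001hilbert}, which asserts that $\C[Z(n,l)]$ is free over $\C[\yn]$ and by symmetry over $\C[\xn]$. To pass from the full polygraph to the bijective component $V$ I would either extract $\C[V]$ as a direct summand using an idempotent of the permutation action on the second $n$-tuple, or rerun Haiman's inductive Koszul-type argument with the parameter function $f\colon[n]\to[n]$ restricted to bijections.

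Third, to identify $M$ with $R\Gamma_{\Hilb_n}(P\otimes P)$ and to establish the vanishing of higher derived functors, I would use the Bridgeland-King-Reid-Haiman equivalence, under which $P$ corresponds to the structure sheaf $\cO_{X_n}$ of the isospectral Hilbert scheme. The functor $F\mapsto R\Gamma_{\Hilb_n}(P\otimes F)$ is realized by pull-back to $X_n$ followed by push-forward to $(\C^2)^n/S_n$, and evaluated on $F=P$ it produces the derived coordinate ring of the fiber product $X_n \times_{\Hilb_n} X_n$, whose underlying reduced scheme is exactly $V$. A character comparison against the $k=0$ case of the Cauchy formula \eqref{eq:hnqt} together with a direct bigraded character computation for $M$ would pin down the identification. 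The main obstacle is the derived-vanishing step itself: since $X_n$ is Cohen-Macaulay but not flat over $\Hilb_n$, the fiber product can a priori carry higher Tor sheaves, and ruling out embedded primes or excess components near the most singular torus-fixed points of $\Hilb_n$ is the key geometric difficulty, which I would attack using the local freeness of $P$ and Haiman's vanishing theorems for tensor powers of $P$.
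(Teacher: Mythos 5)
The statement you are proving is Conjecture~\ref{conj:bkr}; the paper does not prove it, and says so explicitly in the Remark immediately following it (``to the best of our knowledge Conjecture~\ref{conj:bkr} is not known''), so there is no paper proof to compare against. Your outline in fact tracks the paper's own motivating remark quite closely: the identification of $M$ with the coordinate ring of the union $V$ of permutation graphs, the use of $P=\pi_*\cO_{X_n}$ to identify $\Gamma_{\Hilb_n}(P\otimes P)$ with $\Gamma(\cO_{X_n\times_{\Hilb_n\C^2}X_n})$, and the $n!$-fold cover of that fiber product by copies of $X_n$ are all present in the paper's remark. But the paper isolates exactly where that argument stops: one needs (i) surjectivity of the natural map $\Gamma(\cO_{X_n})\otimes\Gamma(\cO_{X_n})\to\Gamma(\cO_{X_n\times_{\Hilb_n\C^2}X_n})$, so that $\Gamma_{\Hilb_n}(P\otimes P)=\image\phi$, and (ii) vanishing of the higher cohomologies of the ideal sheaf of $X_n\times_{\Hilb_n\C^2}X_n$ in $X_n\times X_n$. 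You acknowledge (ii) as ``the key geometric difficulty'' and propose to use Haiman's vanishing theorems, but those theorems cover $P\otimes B^{\otimes l}$, not $P\otimes P$, and the paper even notes that the analogous vanishing already fails for $P^{\otimes 3}$, so this is not a routine citation. Item (i) you do not address at all.

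The freeness step also does not go through as written. Haiman's polygraph theorem gives freeness of $R(n,l)=\C[Z(n,l)]$ over $\C[\yn]$, where $Z(n,l)$ is the union of the subspaces $W_f$ over \emph{all} functions $f\colon[l]\to[n]$. Your $V$ is the union of $W_\sigma$ over the bijections only, hence a proper closed subvariety of $Z(n,n)$, and $M\cong\C[V]$ is therefore a \emph{quotient} of $R(n,n)$, not a direct summand. The permutation action on the second $n$-tuple permutes the components $W_f$ among themselves but does not produce a $\C[\xn]$-module splitting of the restriction map $R(n,n)\to\C[V]$, so there is no idempotent doing what you want. A quotient of a free module is of course not generally free, so Haiman's theorem alone says nothing about $M$, and ``rerunning Haiman's inductive argument restricted to bijections'' is precisely the substantial new work the conjecture is asking for, not a routine modification. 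In short, your proposal reproduces the paper's own heuristic and correctly locates the difficulties, but does not resolve any of them; the statement remains a conjecture.
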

\begin{rem}
Haiman identified the module in the case 
$F= B^{\otimes l}$ as the polygraph module $R(n,l)$
defined in \cite{Haiman01vanishingtheorems}, where
$B$ is the tautological bundle, and
he proved the second two statements for $R(n,l)$.
To the best of our knowledge Conjecture \ref{conj:bkr}
is not known. We point out that the vanishing
statement is definitely false for 
three powers of the Procesi bundle $P^{\otimes 3}$,
which may be seen by observing the Atiyah-Bott localization
actually has negative terms.
\end{rem}
\begin{rem}
The conjecture is motivated by the following geometric picture. Recall the commutative diagram (\cite{haiman2001hilbert})
\[
\begin{tikzcd}
X_n \arrow{r} \arrow{d}{\dyckpath} & \C^{2n} \arrow{d}\\
\Hilb_{n}\C^2 \arrow{r} & \C^{2n}/S_n
\end{tikzcd}
\]
The diagram is a reduced cartesian product and the space $X_n$ is the \emph{isospectral Hilbert scheme}. The map $\dyckpath$ is finite and $P=\pi_* O_{X_n}$. Thus the ring $\Gamma_{\Hilb_n}(P\otimes P)$ is the ring of functions on $X_n\times_{\Hilb_n\C^2} X_n$, which is a closed subscheme of $X_n\times X_n$. On the other hand, $X_n\times_{\Hilb_n\C^2} X_n$ is reduced, so it coincides with the reduced fiber product 
\[
X_n\times_{\Hilb_n\C^2} X_n = \left(\Hilb_n \times_{\C^{2n}/S_n} \left(\C^{2n}\times_{\C^{2n}/S_n} \C^{2n}\right)\right)_{\red}.
\]
The space $\C^{2n}\times_{\C^{2n}/S_n} \C^{2n}$ is covered by graphs of permutations viewed as maps $\C^{2n}\to \C^{2n}$. This induces a covering of $X_n\times_{\Hilb_n\C^2} X_n$ by $n!$ copies of $X_n$. Passing to the rings of functions we obtain ring homomorphisms:
\[
\Gamma(\cO_{X_n}) \otimes \Gamma(\cO_{X_n}) \to \Gamma(\cO_{X_n\times_{\Hilb_n\C^2} X_n}) \to \bigoplus_{\sigma\in S_n} \Gamma(\cO_{X_n}),
\]
whose composition is the map $\phi$ of \eqref{eq:ppmap}. The second map above is injective because the functor $\Gamma$ is left exact. If we knew that the first map is surjective, we would have
\[
\Gamma_{\Hilb_n\C^2}(P\otimes P) = \Gamma(\cO_{X_n\times_{\Hilb_n\C^2} X_n}) = \image \phi.
\]
The conjecture is then reduced to the  vanishing of the higher cohomologies of the ideal sheaf of $X_n\times_{\Hilb_n\C^2} X_n$ in $X_n \times X_n$.
\end{rem}

\subsection{Relation with GKM theory}
\label{sec:sphilb}

The space $\affsp_{n,m}$ has an paving by affines
by the results of $\cite{lusztig1991fixed}$ in the coprime
case, and \cite{goresky2003purity} for the general case,
including $\affsp_{n,m}$ for general $(n,m)$ in type A.
In the unramified case of $m=kn$ studied in \cite{goresky2004unramified},
the equivariant cohomology $H_T^*(X_{n,kn})$ is a submodule 
of the free $\C[\xn]$-module with basis $W_n$, and 
there is a basis in which the leading coefficient in the 
Bruhat order is
\[a_{w,k}(\xn)=\prod_{i<j} (x_i-x_j)^{\#\edges_{kn}^{i,j}(w)}.\]
Here $\edges^{i,j}_{m}(w)$ is the set of transpositions
$t_{a,b}\in \edges_m(w)$ for which 
$\{\bar{a},\bar{b}\}=
\{\bar{i},\bar{j}\}$, where the bar is the congruence class
modulo $n$. The standard description of the corresponding
homology is given as a subspace of the  $\C(\xn)$-vector space
with the same fixed point basis, which is different from
the description of \cite{goresky2004unramified}.
For instance, in the case of $k=\infty$, 
we can compare
the coefficient $a_{w,\infty}(\xn)$ with the leading terms
in Kostant and Kumar's nil Hecke ring \cite{kostant1986nil,lam2014schur},
which encodes the equivariant homology of the affine flag variety
\[A_{w}=\sum_{v\bruleq w} c_{v,w}(\xn) v,\quad c_{w,w}(\xn)=
\ii_{w,\infty}(\xn)^{-1}.\]

We expect that $M$ embeds
as a submodule of the $GL_n$ version 
of $H_*^T(\affsp_{n,kn})$, in
which the
fixed points only consist of positive permutations
\[M\subset \bigoplus_{\mn,\tau} \C[\xn]y^{\mn} \tau
=\bigoplus_{w\in \wpos} e_w,\quad 
w=(\tau_1+m_{\tau_1}n,...,\tau_n+m_{\tau_n}n).\]
A construction of this type was used in \cite{carlsson2018affine}
for instance, in which the authors exhibited an isomorphism
$DR_n\cong H_*(\affsp_{n,n+1})$ related to the ones
studied in \cite{oblomkov2014geometric}, and used it
to study the diagonal coinvariant algebra $DR_n$ 
as a module over $\C[\xn]$.
In another example, O. Kivinen 
showed that Haiman's alternant ideal 
$J_n\subset \C[\xn,\yn]$ in general
Lie type satisfies a suitable
version of the GKM relations, and therefore injects into the
equivariant Borel-Moore homology of the Grassmannian version
of $\affsp_{n,kn}$. In type A,
when combined with Haiman's results,
it follows that the map is an isomorphism when the
$y$-variables are inverted \cite{kivinen2020unramified}.

Now let 
\[b_{w,k}(\xn)=\prod_{i<j}(x_i-x_j)^{k-\#\edges^{i,j}_{kn}(w)},\]
whose degree is $\dimv_{kn}(w)$. The following conjecture
illustrates the connection with Theorem A in the case $k=1$:
\begin{conjecture}
\label{conj:free}
There exist free generators 
$A_w=\sum_{v} c_{v,w}(\xn) e_v \in M$ as $v,w\in \wpos_n$
satisfying the following properties:
\begin{enumerate}
    \item The $A_w$ freely generate $M$ as a $\C[\xn]$-module.
\item  The coefficients satisfy $c_{v,w}(\xn)=0$ unless $v\bruleq w$,
and the leading term is given by $c_{w,w}(\xn)=b_{w,1}(\xn)$.
\item For any compositions 
$\alpha,\beta$, if $w$ is the element of 
maximal length in $S_\alpha \backslash \wpos_n / S_\beta$,
then $A_w \in M^{\alpha,\beta}$, 
the invariant subspace with respect to the 
product of the corresponding Young subgroups.
\end{enumerate}
In particular, there is the expected freeness of 
$M$ over $\C[\xn]$.
\end{conjecture}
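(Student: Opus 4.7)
The strategy is to construct the generators $A_w$ explicitly as images under the polygraph map $\phi$ of equation \eqref{eq:ppmap}, verify triangularity and the leading-term formula directly, and then deduce the freeness statement by comparing Hilbert series via Theorem A. To begin, I would reindex the natural $S_n$-basis of $\C[\xn,\yn] \cdot S_n$ by positive affine permutations via the identification $w = (\tau_1 + m_{\tau_1}n, \ldots, \tau_n + m_{\tau_n}n)$, so that the monomial $y^{\mn} \tau$ corresponds to $e_w$. Under this identification, the coefficient $c_{v,w}(\xn)$ of an element $\phi(g)$ at a fixed point $e_v$ is obtained by extracting the appropriate monomial in $\yn$ from $g(\xn, \yn, \tau(\xn), \tau(\yn))$, where $\tau$ is the underlying permutation of $v$.

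Next, I would construct $g_w(\xn, \yn, \zn, \wn)$ for each $w \in \wpos_n$ so that $A_w = \phi(g_w)$ satisfies (2) and (3). A natural candidate is a product of linear factors modeled on affine Schubert polynomials, engineered so that the evaluation at any fixed point $v$ strictly below $w$ in the Bruhat order vanishes, while the evaluation at $w$ itself produces $b_{w,1}(\xn)$. The leading-term identity should reduce via Proposition \ref{prop:paff} to the combinatorial identification of $\#\edges^{i,j}_n(w)$ with the set of attacking pairs for the sorted triple $(\mn, \an)$ associated to $w$. The invariance property (3) then follows by choosing $g_w$ to respect Young-subgroup symmetries whenever $w$ is the maximal double-coset representative.

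For the freeness assertion (1), I would combine the triangularity of (2) with a Hilbert series comparison. Assuming Conjecture \ref{conj:bkr}, the bigraded $S_n \times S_n$-Frobenius character of $M$ equals $\nabla e_n[XY/((1-q)(1-t))]$, which by Theorem A coincides with $\Omega_1[X, Y]$. Proposition \ref{prop:paff} converts the combinatorial expansion of $\Omega_1$ into a sum over $w \in \wpos_n$ weighted by $q^{\dimv_n(w)} t^{|\mn(w)|}$, matching exactly the Poincar\'e series of a free $\C[\xn]$-module with generators of the predicted bidegrees. Since the $A_w$ are linearly independent by triangularity and the leading terms $b_{w,1}(\xn)$ are nonzero, the Hilbert series count then forces them to span all of $M$.

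The main obstacle is the explicit construction of the polynomials $g_w$ with both the Bruhat vanishing property and the correct leading term; no affine analog of double Schubert polynomials adapted to polygraphs is known to us, and the GKM-style leading term $b_{w,1}(\xn)$ involving the combinatorially defined $\edges^{i,j}_n(w)$ may well require new interpolation techniques. A secondary difficulty is the dependence on Conjecture \ref{conj:bkr}; without it, one would need to establish the Frobenius character formula for $M$ directly, extending Haiman's polygraph analysis beyond the submodules $R(n, l)$ that he treated.
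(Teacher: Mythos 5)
This statement is labeled \emph{Conjecture} \ref{conj:free} in the paper and is not proved there; the authors explicitly present it as open, offering only heuristic motivation via the GKM presentation of $H_*^T(\affsp_{n,kn})$ and the Kostant--Kumar nil Hecke picture. So there is no proof in the paper to compare against, and any evaluation must rest on whether your proposal would actually close the gap.

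It would not, and you already concede the decisive point: the entire mathematical content of the conjecture is the existence of the interpolation classes $A_w$ with Bruhat triangularity, the prescribed leading coefficient $b_{w,1}(\xn)$, and the double-coset invariance property, and your ``natural candidate is a product of linear factors modeled on affine Schubert polynomials'' is a description of what is wanted rather than a construction. Without a concrete $g_w$ and a verification of the vanishing conditions at fixed points $v \ndleq w$, nothing downstream can be checked. The Hilbert-series argument you sketch for item (1) is sound in shape --- triangular generators with nonzero, homogeneous leading coefficients $b_{w,1}(\xn)$ give linear independence over $\C[\xn]$, and a matching graded Frobenius character would then force spanning --- but it has two further problems. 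First, it invokes Conjecture \ref{conj:bkr}, which is itself unproven and already asserts the freeness you are trying to establish, so you would be proving a conjecture by assuming a stronger one. Second, even granting \ref{conj:bkr} plus Theorem A, one still needs the identification of the bidegree of the proposed generator $A_w$ with the exponent pair $(|\mn(w)|, \dimv_n(w))$ appearing in $\Omega_1[X,Y]$; that identification is exactly what Proposition \ref{prop:paff} gives at the level of combinatorial data, but turning it into a bidegree count for a polynomial $g_w$ that has not been written down is again circular. In short: your proposal correctly identifies the ingredients one would need and the logical skeleton of a GKM-style proof, but it leaves the central construction as an acknowledged black box, and until that box is filled this remains a conjecture, as the paper presents it.
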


To connect this with Theorem A, observe that
combining \eqref{eq:nabcauchy}
with Conjecture \ref{conj:bkr}, we would have
\begin{equation}
\label{eq:nabmod}
\nabla e_n\left[\frac{XY}{(1-q)(1-t)}\right]=
\frob_{Y,X} M.\end{equation}
On the other hand, in light of Conjecture \ref{conj:free},
we expect that $\frob_{Y,X} M=\Omega_1[X,Y]$ with respect
to the dot and star actions mentioned in the introduction.
To see this in the case of the Hilbert series, 
we take the contribution to $\Omega_1[X,Y]$ 
for which $\an,\bn$ have all distinct entries. 
Then the automorphism factor is trivial, and we obtain
the sum in \eqref{eq:raths}, 
using Proposition \ref{prop:paff} to relate the corresponding 
$\dinv$ statistics.

\section{Main results}
We can now state and prove our main Theorem,
and some consequences.
\subsection{Main theorem}
\label{sec:mainthms}
Recall the conventions for summations of sorted representatives
described in Section \ref{sec:combex}. We have our main theorem:
\begin{thm}
\label{thm:thm1}
For any $k\geq 1$, we have
\begin{equation}\label{eq:nablahn}
\nabla^k e_n\left[\frac{XY}{(1-q)(1-t)}\right] 
= \sum_{[\mn, \an, \bn]}  \frac{t^{|\mn|} 
q^{\dinv_k(\mn, \an,\bn)}}{(1-q)^n \aut_q(\mn,\an,\bn)}
X_{\an} Y_{\bn}.
\end{equation}
\end{thm}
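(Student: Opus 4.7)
The plan is to characterize $\Omega_k[X,Y]$ as the unique expression matching the Macdonald-basis expansion of $\nabla^k$ applied to the Cauchy product, following the HHL strategy of \cite{haglund2005combinatorial}. Since $\nabla$ has distinct eigenvalues $q^{n(\lambda')}t^{n(\lambda)}$ on the modified Macdonald basis, the Macdonald--Cauchy formula gives
\[\nabla^k e_n\!\left[\frac{XY}{(1-q)(1-t)}\right] = \sum_\lambda \frac{q^{kn(\lambda')}\,t^{kn(\lambda)}\,\Ht_\lambda[X]\,\Ht_\lambda[Y]}{w_\lambda(q,t)}\]
for the usual Macdonald--Cauchy norm $w_\lambda(q,t)$. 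The goal is then to recover the same Macdonald expansion from the combinatorial sum on the right-hand side of \eqref{eq:nablahn}, reducing Theorem \ref{thm:thm1} to a coefficient matching identity on the $\Ht_\lambda[X]\,\Ht_\lambda[Y]$ basis.

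The heart of the argument is verifying two triangularity conditions in the dominance order on $\lambda$, mirroring those that characterize $\Ht_\lambda$ itself. Applying the plethystic substitution $Y \to Y(1-q)$ rewrites $Y_\bn$ as a signed sum over splittings of the multiset $B(\bn)$, and I expect a sign-reversing involution on sorted triples $(\mn,\an,\bn)$ -- organized by the attacking Dyck path $\dyckpath_k(\mn,\an)$ of Definition \ref{def:pima} and the pairwise contributions $\dinv_k^{i,j}$ -- to cancel all contributions outside the desired dominance class. A parallel argument with $Y \to Y(1-t)$, organized by the fibers of the projection $(\mn,\an,\bn)\mapsto\mn$ and using $|\mn|$ as the grading, should yield the complementary triangularity. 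Applying the analogous substitutions to $X$ and invoking the manifest $X \leftrightarrow Y$ symmetry of the left-hand side then forces the off-diagonal coefficients on $\Ht_\mu[X]\,\Ht_\nu[Y]$ with $\mu \ne \nu$ to vanish.

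Once triangularity is in hand, the diagonal coefficient on $\Ht_\lambda[X]\,\Ht_\lambda[Y]$ is extracted from the extremal sorted triples, which by Proposition \ref{prop:paff} correspond to unique longest representatives in specific affine Weyl double cosets; I expect the statistics $|\mn|$ and $\dinv_k$ on this extremal triple to evaluate to $kn(\lambda)$ and $kn(\lambda')$ respectively, matching the eigenvalue on the nose, with the normalization $w_\lambda$ recovered from the $\aut_q$ factor. The main obstacle is the sign-reversing involution underlying the triangularity step: it must simultaneously track the reshuffling of $\bn$, the change in $\mu(\mn,\an,\bn)$ and hence $\aut_q(\mn,\an,\bn)$, and the subtle behavior of $\dinv_k^{i,j}$ when adjacent rows of the sorted triple are swapped across an attacking pair. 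A natural simplification is to proceed by induction on $k$, taking Example \ref{ex:cauchymab} as the $k=0$ base case and reducing to a single application of $\nabla$, which converts the global triangularity into a step-by-step recursion that is likely more tractable than the full eigenvalue matching at once.
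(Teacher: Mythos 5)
Your high-level strategy is the same one the paper takes: define $\Omega_k[X,Y]$ as an operator via the claimed formula, then show it satisfies the characterizing properties of $\nabla^k$ (self-adjointness with respect to the modified Macdonald pairing, triangularity in dominance order after appropriate plethystic substitutions, and matching leading term), in the spirit of \cite{haglund2005combinatorial}. So you have correctly identified what needs to be proved. But there is a genuine gap: essentially all of the combinatorial content of the proof is concentrated in the sign-reversing involution you defer to as ``the main obstacle,'' and you do not construct it. The paper's argument hinges on the precise bookkeeping of Section \ref{sec:proofa}: rewriting $\Omega_k[X(t-1),Y(q-1)]$ as a signed sum over the explicit set $\vanset(n,k)$ of quadruples $(l,\mn,\an,\bn)$ with a ``dividing line'' position $l$, defining the $\move_i$ operation across that dividing line, the resulting involution $\invo_k$, and then proving (Lemmas \ref{lemma:tbound} and \ref{lemma:domb}) that the fixed points contribute only when $\lambda \dleq \mu'$ and uniquely when $\lambda = \mu'$. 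Without these constructions the ``I expect a sign-reversing involution'' step is not a proof sketch but a restatement of what must be shown, and the subtle interactions you flag (how $\dinv^{i,j}_k$, $\aut_q$, and the sorting conditions transform under a move) are exactly where the paper does real work (e.g.\ Condition~\ref{item:ank attacks} ensuring no repeated triples, the role of $\sigma_i$ in choosing the minimal movable index).

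Two smaller points where your plan diverges from what is actually needed. First, you propose applying $Y\mapsto Y(1-q)$ and $Y\mapsto Y(1-t)$ separately, obtaining two triangularity statements; the paper instead applies a single combined substitution $X\mapsto X(t-1)$, $Y\mapsto Y(q-1)$ and gets one triangularity condition $\lambda\dleq\mu'$, which together with the manifest $X\leftrightarrow Y$ symmetry of $\dinv_k$ and the leading-term computation uniquely pins down $\nabla^k$. Using symmetry to replace one of the two Macdonald triangularities is precisely what makes the operator-level characterization lighter than the polynomial-level HHL characterization, and your version would duplicate effort. Second, the suggestion to induct on $k$ with $k=0$ as the base (Example \ref{ex:cauchymab}) and reduce to one application of $\nabla$ does not obviously work: $\dinv_k$ is not additive in $k$ in a way that factors $\nabla^k$ through $\nabla^{k-1}$ on this combinatorial model, and the paper's involution treats $k$ as a fixed parameter rather than a recursion variable. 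If you want to salvage the induction idea you would need a combinatorial ``raising'' map from $\vanset(n,k)$ to $\vanset(n,k+1)$ intertwining with $\nabla$, which is not supplied. Incidentally, the paper also gives a second, geometric proof via counting parabolic bundles on $\BP^1$ (Section \ref{sec:bundles}), an entirely different route; your proposal does not touch that, which is fine, but it is worth knowing the statement admits a proof that bypasses the involution entirely.
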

Before proving Theorem \ref{thm:thm1}, we state a few
immediate consequences. Let
\begin{equation}
\label{eq:csfa}
\csfa_{\dyckpath}[Y;q]=
\sum_{\bn} q^{\inv_{\dyckpath}(\bn)}Y_{\bn},
\end{equation}
where $\inv_{\dyckpath}(\bn)$ is defined in Section
\ref{sec:comb}.
\begin{prop}
\label{prop:nabcsf}    
The right hand side of \eqref{eq:nablahn} is given by
\[\sum_{[\mn,\an]} 
\frac{t^{|\mn|} q^{\dinv_k(\mn, \an)}}{(1-q)^n \aut_q(\mn,\an)}
X_{\an} \csfa_{\dyckpath_k(\mn,\an)}[Y;q].\]
\end{prop}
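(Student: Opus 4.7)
\textbf{Proof plan for Proposition \ref{prop:nabcsf}.} The plan is to start from the right hand side of \eqref{eq:nablahn}, apply equation \eqref{eq:dinv} to split
\[q^{\dinv_k(\mn,\an,\bn)} \;=\; q^{\dinv_k(\mn,\an)}\,q^{\inv_{\dyckpath_k(\mn,\an)}(\bn)},\]
and reindex by first fixing the sorted pair $[\mn,\an]$ and then summing over the completions $\bn$. After pulling the factors depending only on $(\mn,\an)$ out of the inner sum, matching with the target expression reduces to showing, for each fixed sorted pair $(\mn,\an)$ with $\pi := \dyckpath_k(\mn,\an)$, that
\[\sum_{\bn:\,(\mn,\an,\bn)\ \text{sorted}} \frac{q^{\inv_\pi(\bn)}}{\aut_q(\mn,\an,\bn)}\,Y_\bn \;=\; \frac{1}{\aut_q(\mn,\an)}\sum_{\bn} q^{\inv_\pi(\bn)}\,Y_\bn.\]

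To prove this identity I would partition the set of unrestricted labels $\bn$ into orbits under the Young subgroup $S_{\alpha(\mn,\an)}$ acting by permuting positions within each block of $(\mn,\an)$. Since $Y_\bn=\prod_i y_{b_i}$ depends only on the multiset of values, it is constant on each orbit, and each orbit contains a unique sorted representative $\bn^{\mathrm{rep}}$. The identity then reduces to the per-orbit statement
\[\sum_{\bn\,\in\,\text{orbit of }\bn^{\mathrm{rep}}} q^{\inv_\pi(\bn)} \;=\; \frac{\aut_q(\mn,\an)}{\aut_q(\mn,\an,\bn^{\mathrm{rep}})}\,q^{\inv_\pi(\bn^{\mathrm{rep}})}.\]

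The key structural observation, requiring $k\geq 1$, is that $\pi$ respects the block structure of $(\mn,\an)$ in two complementary senses. First, if $i<j$ lie in the same block then $m_i=m_j$ and $a_i=a_j$, so $m_j-m_i-1+k=k-1\geq 0$ and every such pair belongs to $\uio(\pi)$. Second, whether $i$ $k$-attacks $j$ for $i,j$ in distinct blocks depends only on the data $(m_i,a_i)$ and $(m_j,a_j)$, and hence is determined at the block level. It follows that the between-block contribution to $\inv_\pi(\bn)$ depends only on the multiset of $b$-values in each block and is constant on the orbit, while the within-block contribution is the ordinary inversion count of a multiset permutation. Summing $q^{\text{within-block inv}}$ over arrangements of a fixed multiset in a block of size $\alpha_B$ with multiplicities $\beta_{B,j}$ yields the $q$-multinomial $[\alpha_B]_q!/\prod_j[\beta_{B,j}]_q!$; taking the product over blocks gives precisely $\aut_q(\mn,\an)/\aut_q(\mn,\an,\bn^{\mathrm{rep}})$, and multiplying by the orbit-invariant between-block factor $q^{\inv_\pi(\bn^{\mathrm{rep}})}$ completes the identity.

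The main conceptual step is the block-level factorization of the $k$-attack relation; once this is visible from Definition \ref{defn:dinv1} the rest is standard bookkeeping with $q$-multinomials, so I do not expect serious obstacles beyond careful handling of the orbit decomposition.
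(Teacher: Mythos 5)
Your proposal is correct and takes essentially the same approach as the paper's proof. The paper compresses the whole orbit computation into the single observation that a within-block swap of $b_i<b_j$ (where $m_i=m_j$, $a_i=a_j$) raises $\dinv_k$ by exactly one, so summing over the orbit produces the $q$-multinomial ratio $\aut_q(\mn,\an)/\aut_q(\mn,\an,\bn)$; your within-block versus between-block factorization of $\inv_\pi$, together with the explicit $q$-multinomial per block, is the spelled-out version of that same mechanism.
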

\begin{proof}
Notice that whenever $m_i=m_j$, $a_i=a_j$, and $b_i<b_j$,
switching the order of $b_i$ and $b_j$ always increases
$\dinv_k(\mn,\an,\bn)$ 
by one. Therefore we may remove the
sorting condition in $\bn$, i.e. replace $[\mn,\an,\bn]$
with $[\mn,\an],\bn$ in the sum, and remove it from the
automorphism factor without changing the answer. Then we have
\[\dinv_k(\mn,\an,\bn)=\dinv_k(\mn,\an)+
\inv_{\dyckpath_k(\mn,\an)}(\bn).\]
\end{proof}
We have the following interpretation of
$\xi_{\dyckpath}[Y;q]$. Let
\begin{equation}
\label{eq:csf}
\csf_{\dyckpath}[Y;q]=
\sum_{\bn:(i,j)\in \uio(\dyckpath)\Rightarrow b_i\neq b_j} 
q^{\inv_\dyckpath(\bn)}Y_{\bn}
\end{equation}
be Stanley's chromatic symmetric function.
\begin{prop}
\label{prop:xitocsf}
We have that
\begin{equation} \label{eq:xitocsf}
\csfa_{\dyckpath}[Y;q]=(1-q)^n 
\omega\csf_{\dyckpath}[Y(1-q)^{-1};q].
\end{equation}
In particular, it is a symmetric function.
\end{prop}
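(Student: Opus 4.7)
The plan is to compute $(1-q)^n\,\omega\csf_\dyckpath[Y(1-q)^{-1};q]$ by expanding the plethystic substitution $Y \mapsto Y/(1-q)$ combinatorially, reorganizing the resulting sum, and recognizing it as $\csfa_\dyckpath[Y;q]$.

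First, since $\omega$ commutes with positive plethystic substitutions, the identity is equivalent to
\[
\csfa_\dyckpath[Y(1-q);q] \;=\; (1-q)^n\,\omega\csf_\dyckpath[Y;q],
\]
which I would prefer to work with. I would then group the labelings summed in $\csfa_\dyckpath$ and $\csf_\dyckpath$ by their standardization $\sigma\in S_n$ (in the sense of Definition \ref{def:standardization}, so that $\inv_\dyckpath$ depends only on $\sigma$), obtaining parallel Gessel fundamental quasi-symmetric expansions
\[
\csfa_\dyckpath[Y;q] \;=\; \sum_{\sigma\in S_n} q^{\inv_\dyckpath(\sigma)}\, F_{\mathrm{Des}(\sigma^{-1})}[Y],\qquad \csf_\dyckpath[Y;q] \;=\; \sum_{\sigma\in S_n} q^{\inv_\dyckpath(\sigma)}\, F_{D(\sigma,\dyckpath)}[Y],
\]
where $F_D[Y]$ is Gessel's fundamental quasi-symmetric function and $D(\sigma,\dyckpath)\supseteq \mathrm{Des}(\sigma^{-1})$ includes the extra strict-increase positions forced by requiring $b_i\neq b_j$ across attacking pairs. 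The claim then reduces, term-by-term in $\sigma$, to a classical plethystic identity for the Gessel basis: namely that $(1-q)^n\,\omega\, F_{D(\sigma,\dyckpath)}[Y/(1-q)]$ is precisely $F_{\mathrm{Des}(\sigma^{-1})}[Y]$. This Gessel-basis identity can be checked by expanding both sides in the monomial quasi-symmetric basis.

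I expect the main obstacle to be the precise bookkeeping of the descent-set data $D(\sigma,\dyckpath)$, since the properness constraint across attacking edges is not literally a descent-set condition and must be translated into one. An alternative route that avoids this work is to identify $\csfa_\dyckpath$ with the unicellular LLT polynomial associated to the indifference graph of $\dyckpath$, and invoke the known relation between unicellular LLTs and the Shareshian--Wachs chromatic quasi-symmetric functions (as established in \cite{carlsson2018proof} and in the work of Haglund--Wilson). In either case, the ``in particular'' statement---that $\csfa_\dyckpath$ is symmetric---is immediate from the identity, since the right side is a plethysm of the symmetric function $\omega\csf_\dyckpath$.
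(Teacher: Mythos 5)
Your second route (recognize $\csfa_\dyckpath$ as a unicellular LLT polynomial and cite the known LLT--chromatic relation) is precisely what the paper does: the proof is a one-line citation of Proposition~3.5 of \cite{carlsson2018proof}. So that part is fine.

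Your first route, however, has a concrete gap at the final step. Grouping colorings by their standardization and obtaining Gessel expansions of both $\csfa_\dyckpath$ and $\csf_\dyckpath$ with a single fundamental per $\sigma$ is, after some work, legitimate for indifference graphs. But the claimed term-by-term plethystic identity
\[
(1-q)^n\,\omega\, F_{D(\sigma,\dyckpath)}\!\left[\tfrac{Y}{1-q}\right] \;\stackrel{?}{=}\; F_{\mathrm{Des}(\sigma^{-1})}[Y]
\]
is false. Already for $n=2$, $\sigma=\mathrm{id}$, and $\uio(\dyckpath)=\{(1,2)\}$: here $D(\sigma,\dyckpath)=\{1\}$, so the left-hand side is $(1-q)^2\,\omega\, e_2[Y/(1-q)]=(1-q)^2 h_2[Y/(1-q)] = e_2 + \tfrac{1}{1+q}p_2$, which is not $F_{\emptyset}=h_2=e_2+p_2$. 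The point is that plethysm by $1/(1-q)$ does not act diagonally on the fundamental basis; each $F_D[Y/(1-q)]$ is a $q$-weighted combination of fundamentals, and the identity in Proposition~\ref{prop:xitocsf} only emerges after the full sum over $\sigma$ (weighted by $q^{\inv_\dyckpath(\sigma)}$) is reorganized — a nontrivial resummation across different $\sigma$'s, not a matching of summands. This resummation is exactly what the standardization/superization machinery in \cite{carlsson2018proof} (following Haglund--Haiman--Loehr) carries out, which is why the paper simply cites it rather than redoing the bookkeeping.

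Two small additional points. First, the justification ``$\omega$ commutes with positive plethystic substitutions'' is misleading as stated: $Y\mapsto Y(1-q)$ is not a positive substitution, but $\omega$ does commute with plethysm by any element of the $\lambda$-ring that has $Y$-degree one (such as $Y(1-q)$ or $Y/(1-q)$), which one checks on power sums. Second, the verification that the proper-coloring constraint within a fixed standardization class collapses to a single forced descent set does use the unit-interval (indifference) structure of $\uio(\dyckpath)$ in an essential way; for an arbitrary graph this step would fail, so if you were to push Route~1 through you would need to spell this out.
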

\begin{proof}
We have that $\xi_{\dyckpath}[Y;q]$ is the same as the
LLT polynomial $\chi_{\dyckpath}[Y;q]$ in 
\cite{carlsson2018proof}, and the statement
follows from Proposition 3.5 of that paper.
\end{proof}
\begin{rem}
This can also be seen 
using a conjecture of Shareshian and Wachs
\cite{shareshian2012chromatic}, 
later proved in two different ways in
\cite{brosnan2018hessenberg,guay2016second}, 
which would show that both sides of \eqref{eq:xitocsf}
are equal to the Frobenius character
of the equivariant cohomology 
of the regular semisimple Hessenberg variety.
This should have a geometric interpretation, and we expect
that it corresponds to the
paving of the affine Springer fiber by Hessenberg varieties from
\cite{goresky2003purity}.
\end{rem}
As a corollary, we have the
expression for $(\nabla^k e_1^n,e_n)$ from
\cite{elias2016computation}.
It was later proved in \cite{gorsky2017hilbert}, where it
was shown that both sides equal the
Poincar\'{e} polynomial for the Khovanov-Rozansky knot homology
of $k$th power of the full twist, and also the Hilbert series for the
$k$th power of Haiman's alternant ideal $J_n^k$.
\begin{cor}
We have that
\begin{equation}
    \label{eq:fulltwist}
(\nabla^k e_1^n,e_n)=\frac{1}{(1-q)^n}\sum_{\mn} 
t^{|\mn|}q^{d_k(\mn)},
\end{equation}
where
\[d_k(\mn)=\sum_{i<j} \max(k-m_i+m_j+1,k-m_j+m_i).\]
\end{cor}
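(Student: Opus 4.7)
My plan is to derive the corollary from Theorem~\ref{thm:thm1} by extracting a single-monomial coefficient on both sides. The strategy has three stages: rewrite the main theorem via Proposition~\ref{prop:nabcsf}, apply a coefficient extraction in $X$ and $Y$ that converts the LHS into the scalar $(\nabla^k e_1^n,e_n)$, and then carry out a pairwise combinatorial resummation on the RHS that turns the combined $\dinv_k$-plus-$\aut_q$ data into the $d_k$-sum of the corollary.

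First I use Proposition~\ref{prop:nabcsf} so that the inner $\bn$-summation has been packaged into the chromatic-type function $\xi_{\dyckpath_k(\mn,\an)}[Y;q]$. To obtain $(\nabla^k e_1^n,e_n)$ on the LHS, I invoke the standard plethystic identity
\[
[y_1\cdots y_n]\, e_n[XY/((1-q)(1-t))] \;=\; p_1^n[X]\,/\,((1-q)(1-t))^n,
\]
which follows from the dual Cauchy identity $e_n[XY]=\sum_{\lambda\vdash n} s_\lambda[X]\,s_{\lambda'}[Y]$ together with $[y_1\cdots y_n]s_{\lambda'}[Y]=f^\lambda$. Since $\nabla$ acts on $X$, pairing the resulting expression with $e_n$ in the $X$-alphabet extracts $(\nabla^k p_1^n,e_n)$ together with the explicit prefactor $((1-q)(1-t))^n$.

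On the combinatorial side, the corresponding specialization forces both $\an$ and $\bn$ to be permutations, so $\aut_q(\mn,\an,\bn)=1$. Choosing $\an=(1,\ldots,n)$ as the orbit representative, the RHS reduces to a sum of the form
\[
\frac{1}{(1-q)^n}\sum_{\mn\in\zgz^n}\sum_{\bn\in S_n} t^{|\mn|}\,q^{\dinv_k(\mn,\,\mathrm{id},\,\bn)}.
\]
The final and main step is the pair-by-pair combinatorial identity that converts this double sum into $\sum_\mn t^{|\mn|} q^{d_k(\mn)}$. For each pair $i<j$, I sum out the freedom in $(\delta(a_i>a_j),\delta(b_i>b_j))\in\{0,1\}^2$ simultaneously with the two orderings of $(m_i,m_j)$, and a case split on whether $m_i>m_j$, $m_i=m_j$, or $m_i<m_j$ shows that the combined $q$-contribution at the pair $(i,j)$ is precisely $\max(k-m_i+m_j+1,\,k-m_j+m_i)=d_k^{i,j}(\mn)$.

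The main obstacle is this last per-pair identity. Each individual case is elementary, but one must track carefully how the orbit symmetrization, the $\aut_q$-weight absorbed into the $q$-multinomial coefficient when $\mn$ has collisions, and the $\delta_a,\delta_b$ contributions conspire to produce a clean unsorted statistic on $\mn$ with no residual $S_n$-dependence. Once this identity is in hand, the factor $((1-q)(1-t))^n$ from the plethystic step cancels appropriately against the combinatorial prefactors and the corollary follows.
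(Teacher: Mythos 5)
Your plan diverges from the paper's proof at the coefficient-extraction step, and that divergence introduces genuine gaps.

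The paper's argument is very short: it reads off the coefficient of the single monomial $X_\an Y_\bn = x_1\cdots x_n\,y_1^n$ in \eqref{eq:nablahn}, i.e.\ $\an=(1,\ldots,n)$ (a permutation, so $\aut_q=1$) and $\bn=(1^n)$ (constant, so $\delta(b_i>b_j)$ vanishes identically). Then sorted pairs $[\mn',\an]$ with $\an$ a permutation biject with unsorted compositions $\mn\in\zgz^n$ by recording positions, and $d_k(\mn)=\dinv_k(\mn',\an)$. No residual sum over $\bn$ ever appears.

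You instead extract $[y_1\cdots y_n]$ and then pair the $X$-variables with $e_n$. The plethystic identity $[y_1\cdots y_n]\,e_n[XY/((1-q)(1-t))]=p_1^n[X]/((1-q)^n(1-t)^n)$ is correct, but two things go wrong afterwards. First, pairing with $e_n=s_{1^n}$ under the Hall product extracts the Schur coefficient of $s_{1^n}$, not the coefficient of the square-free monomial $x_1\cdots x_n$; the latter is the pairing with $p_1^n$, i.e.\ the coefficient of $m_{1^n}$. So ``pairing with $e_n$'' does not force $\an$ to be a permutation, and your reduction of the right-hand side to a sum over $\an=\mathrm{id}$, $\bn\in S_n$ is not justified by the operation you perform. (Relatedly, the explicit prefactor you pick up is $1/((1-q)^n(1-t)^n)$, whereas the target only carries $1/(1-q)^n$; the extra $(1-t)^{-n}$ has to be absorbed somewhere, and your account of this is hand-waving.)

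Second, and more seriously, the final ``pair-by-pair'' step cannot work. You are left with a double sum $\sum_{\mn}\sum_{\bn\in S_n} t^{|\mn|} q^{\dinv_k(\mn,\mathrm{id},\bn)}$ and wish to show it equals $\sum_{\mn}t^{|\mn|}q^{d_k(\mn)}$, by summing out $(\delta(a_i>a_j),\delta(b_i>b_j))$ for each pair $(i,j)$ independently. But $(\delta(b_i>b_j))_{i<j}$ ranges over inversion sets of permutations, not over all of $\{0,1\}^{\binom n2}$; the pairs are not independent, so the sum over $\bn$ does not factor across pairs. Concretely, for fixed $\mn$ the inner sum $\sum_{\bn\in S_n} q^{\dinv_k(\mn,\mathrm{id},\bn)}$ is a polynomial with $n!$ terms, and cannot equal the single monomial $q^{d_k(\mn)}$. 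The paper avoids all of this by choosing $\bn=(1^n)$ constant in the $Y$-extraction, so that the $\delta(b_i>b_j)$ terms simply drop out and no sum over $\bn$ ever arises. You should redo the argument by taking the coefficient of $X_{(1,\ldots,n)}Y_{(1^n)}$, note $\aut_q=1$, and invoke the bijection $\mn\leftrightarrow[\mn',\an]$ with $d_k(\mn)=\dinv_k(\mn',\an)$.
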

\begin{proof}
The left hand side of \eqref{eq:fulltwist}
is the result of taking the coefficient
of $X_{\an}$ and $Y_{\bn}$ 
for $\an=(1,...,n)$ and $\bn=(1,...,1)$ in \eqref{eq:nablahn}.
Since the entries of $\an$ are distinct, there is no automorphism
factor. Now notice that the compositions $\mn$
are in bijection with sorted pairs $[\mn',\an]$ where $\an$
has distinct elements, meaning it is a permutation, and that
$d_k(\mn)=\dinv_k(\mn',\an)$.
\end{proof}
\subsection{Proof of Theorem A}
\label{sec:proofa}
We give our first proof of Theorem
\ref{thm:thm1} by taking
equation \eqref{eq:nablahn}
 as the definition of an operator
on symmetric functions, 
 and verifying that it satisfies
 the defining properties of $\nabla^k$, similar
 to the approach 
 in \cite{haglund2005combinatorial}.
Let us define an operator $\nablat{k}$ on symmetric
functions by
\begin{equation}
\label{eq:nablatdef}    
\nablat{k} e_n\left[\frac{XY}{(1-q)(1-t)}\right]=
\Omega_k[X,Y],
\end{equation}
where $\Omega_k[X,Y]$ denotes the right
hand side of \eqref{eq:nablahn}.
By Propositions 
\ref{prop:nabcsf} and 
\ref{prop:xitocsf}, and the symmetry
in $X$ and $Y$, the right
side of \eqref{eq:nablatdef} is symmetric
in both sets of variables, and so
this formula uniquely defines an operator acting
on the $X$-variables.

We will prove the following proposition.
\begin{prop}
    \label{prop:van}
We have the following properties of $\Omega_k[X,Y]$:
\begin{enumerate}
    \item \label{item:sym}
    It is symmetric 
    in the two sets of variables,
    $\Omega_k[X,Y]=\Omega_k[Y,X]$.
\item \label{item:van}
If $\lambda,\mu$
are partitions, then
the coefficient of 
$X^{\lambda} Y^{\mu}$ in 
$\Omega_k[X(t-1),Y(q-1)]$ is zero
unless $\lambda \dleq \mu'$
in the dominance order.
\item \label{item:lt} 
The leading coefficient in front of $X^{\lambda}Y^{\lambda'}$
is $q^{kn(\lambda')}t^{kn(\lambda)}$.
\end{enumerate}
\end{prop}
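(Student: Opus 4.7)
The plan is to verify the three properties in turn, which by an axiomatic characterization of $\nabla^k$ applied to the Cauchy kernel (analogous to the uniqueness result used in \cite{haglund2005combinatorial}) will imply Theorem A. Properties (\ref{item:sym}) and (\ref{item:lt}) should be essentially bookkeeping, while the dominance triangularity in (\ref{item:van}) will be the real obstacle.

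For (\ref{item:sym}), I would rewrite the sum over sorted triples as a sum over the diagonal $S_n$-orbits of $(\mn,\an,\bn)$, with the sorting convention providing a choice of section. Since the local statistic $\dinv_k^{i,j}(\mn,\an,\bn)$ in \eqref{eq:dinvdefb} is manifestly invariant under the simultaneous swap $a_i \leftrightarrow b_i$, $a_j \leftrightarrow b_j$, the swap $X \leftrightarrow Y$ corresponds to choosing a different section of the same orbit decomposition, so the total is preserved. For (\ref{item:lt}), in view of (\ref{item:van}) one only needs the coefficient of $X^{\lambda} Y^{\lambda'}$ in $\Omega_k[X(t-1),Y(q-1)]$ on the dominance boundary; I would identify a single sorted triple $(\mn,\an,\bn)$ contributing there, namely the one in which $\mn$ is the strictly decreasing profile determined by $\lambda$ (each row-coordinate of $\lambda$ contributing $k$ copies in a staircase), while $\an$ sorts to $\lambda$ and $\bn$ sorts to $\lambda'$, and then verify from Definition \ref{defn:dinv} that $|\mn| = k\, n(\lambda)$ and $\dinv_k(\mn,\an,\bn) = k\, n(\lambda')$.

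The main obstacle is (\ref{item:van}). My first step would be to simplify the substitution $Y \to Y(q-1)$ by combining Propositions \ref{prop:nabcsf} and \ref{prop:xitocsf} with the identity $f[-Y] = (-1)^n \omega f[Y]$ for degree-$n$ symmetric $f$; this yields $\csfa_{\dyckpath}[Y(q-1); q] = (q-1)^n \csf_{\dyckpath}[Y;q]$ and hence
\[\Omega_k[X, Y(q-1)] = (-1)^n \sum_{[\mn,\an]} \frac{t^{|\mn|} q^{\dinv_k(\mn,\an)}}{\aut_q(\mn,\an)} X_{\an}\, \csf_{\dyckpath_k(\mn,\an)}[Y; q],\]
so that only the Stanley chromatic symmetric function remains in the $Y$-variables. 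The remaining substitution $X \to X(t-1)$ would then be carried out by expanding $X_{\an}[X(t-1)]$ through a $t$-signed standardization of the label $\an$, which introduces signs indexed by subsets of equal-label blocks.

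The dominance triangularity will then be established by a sign-reversing involution on the resulting signed combinatorial sum, whose fixed points should correspond precisely to configurations with $\mu(\an) \trianglelefteq \mu(\bn)'$. The technical core is constructing this involution so that it preserves both the statistic $\dinv_k$ and the content of the chromatic filling; this is the step I expect to be hardest, as it requires matching up the attacking pairs in the Dyck path $\dyckpath_k(\mn,\an)$ with the equal-$\an$-block structure while respecting the signs produced by standardization. I anticipate that the involution will closely parallel the HHL-type sign-reversing arguments of \cite{haglund2005combinatorial}, adapted to the LLT structure of $\csf_{\dyckpath}$ and the rectangular $(n,kn)$ geometry underlying $\Omega_k$.
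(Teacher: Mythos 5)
Your plan matches the paper's proof essentially step for step: symmetry from the symmetry of $\dinv_k$ in $\an$ and $\bn$, the substitution $Y\mapsto Y(q-1)$ collapsing $\csfa_\pi$ to the chromatic symmetric function, $X\mapsto X(t-1)$ via signed standardization, and a sign-reversing involution whose fixed points are supported on $\lambda\dleq\mu'$ with a unique one at $\lambda=\mu'$ providing the leading coefficient. The one piece you have not actually supplied is the involution itself, which is where the paper invests nearly all of its effort (re-sorting so $\an$ has priority over $\mn$, the rotation operator $\rho$ and the set $\vanset(n,k)$, the movability criterion in Definition \ref{def:invo}, and Lemmas \ref{lemma:tbound}--\ref{lemma:domb}).
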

In particular, we have
\begin{cor}
\label{cor:van}
We have that $\nablat{k}=\nabla^k$, proving
Theorem \ref{thm:thm1}.
\end{cor}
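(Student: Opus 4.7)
The corollary is equivalent, via the definition \eqref{eq:nablatdef} and the Cauchy identity $e_n\bigl[\tfrac{XY}{(1-q)(1-t)}\bigr]=\sum_\mu \Ht_\mu[X]\Ht_\mu[Y]/w_\mu(q,t)$, to the single bisymmetric identity
\[\Omega_k[X,Y]\;=\;\sum_{\mu\vdash n}\frac{q^{kn(\mu')}\,t^{kn(\mu)}\,\Ht_\mu[X]\,\Ht_\mu[Y]}{w_\mu(q,t)},\]
since the right-hand side is the kernel representation of $\nabla^k$ and the $\Ht_\mu$ form a basis of symmetric functions of degree $n$. The plan is to establish this identity by showing that both sides are the unique power series in $X,Y$ satisfying properties (1)-(3) of Proposition \ref{prop:van}.

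First I would verify the three properties for the right-hand side. Property (1) is manifest from the $\mu\leftrightarrow\mu$ symmetry of each summand. For property (2), I would invoke the Macdonald-Haiman Schur triangularities $\Ht_\mu[X(1-t)]\in\mathrm{span}\{s_\alpha:\alpha\trianglerighteq\mu\}$ and $\Ht_\mu[Y(1-q)]\in\mathrm{span}\{s_\beta:\beta\trianglerighteq\mu'\}$, together with the identity $f[-Z]=(-1)^n\omega f[Z]$ for $f$ of degree $n$ and the involution formula $\omega s_\alpha=s_{\alpha'}$. Each summand then contributes to $\Omega_k[X(t-1),Y(q-1)]$ only terms $s_\gamma[X]s_\delta[Y]$ with $\gamma\dleq\mu'$ and $\delta\dleq\mu$; converting to monomials via $s_\gamma=\sum_{\lambda\dleq\gamma}K_{\gamma\lambda}m_\lambda$ yields monomial support on partitions $\lambda\dleq\mu'$ and $\rho\dleq\mu$, whence $\lambda\dleq\mu'\dleq\rho'$, which is exactly the required vanishing. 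Property (3) then follows by identifying the top monomial $X^{\mu'}Y^\mu$, which only the $\mu$-th summand can supply, with coefficient $q^{kn(\mu')}t^{kn(\mu)}$ read off from the explicit top terms of these triangularities.

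For the uniqueness direction, given any bisymmetric $F[X,Y]$ satisfying (1)-(3), I would expand $F=\sum_{\lambda,\nu}A_{\lambda,\nu}\Ht_\lambda[X]\Ht_\nu[Y]$ and apply the same substitution $X\to X(t-1),\,Y\to Y(q-1)$; the resulting transition matrix from $(A_{\lambda,\nu})$ to the monomial coefficients $[m_\rho[X]m_\sigma[Y]]F[X(t-1),Y(q-1)]$ is upper-triangular in the product dominance order with invertible diagonal. Property (2) cascades through this triangular system to force $A_{\lambda,\nu}=0$ whenever $\lambda\neq\nu$, and property (3) identifies the diagonal as $A_{\mu,\mu}=q^{kn(\mu')}t^{kn(\mu)}/w_\mu(q,t)$. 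Since Proposition \ref{prop:van} supplies properties (1)-(3) for $\Omega_k$, the displayed identity holds, yielding $\nablat{k}=\nabla^k$. The main technical obstacle is the joint triangularity bookkeeping in the verification of property (2): one must carefully track how the $\omega$-twist from $X(t-1)=-X(1-t)$ interacts with the Schur-level triangularity, and confirm that the resulting monomial support is as small as needed; the uniqueness step is then essentially a transcription of this computation in reverse.
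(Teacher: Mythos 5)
Your proposal matches the paper's route: Corollary~\ref{cor:van} is deduced by recognizing the three properties of Proposition~\ref{prop:van} as a characterization of $\nabla^k e_n\bigl[XY/(1-q)(1-t)\bigr]$, which the paper compresses into one sentence (self-adjointness in the modified Macdonald inner product, triangularity, correct leading term) and which you unpack on the $\Ht_\lambda[X]\Ht_\nu[Y]$ expansion of the kernel. Two points are worth tightening. First, as the paper emphasizes at the start of its proof, one must record that $\Omega_k[X,Y]$ is a genuine symmetric function in each alphabet separately (Proposition~\ref{prop:xitocsf} together with the $X\leftrightarrow Y$ symmetry) before one is entitled to write $\Omega_k=\sum_{\lambda,\nu} A_{\lambda,\nu}\Ht_\lambda[X]\Ht_\nu[Y]$. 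Second, the sentence ``Property (2) cascades through this triangular system to force $A_{\lambda,\nu}=0$ whenever $\lambda\neq\nu$'' oversells what triangularity alone yields: the transition to monomial coefficients is indeed triangular and invertible, but properties (2) and (3) only pin down the target coefficients $[m_\rho[X]m_\sigma[Y]]$ on the boundary and outside the dominance cone $\rho\dleq\sigma'$, leaving the interior unconstrained, so a bare inversion of the triangular system does not close the argument. One must play the vanishing in (2) off against the $X\leftrightarrow Y$ symmetry $A_{\lambda,\nu}=A_{\nu,\lambda}$ coming from property (1): triangularity plus (2) alone kill $A_{\lambda,\nu}$ only in one direction of the dominance order, and the symmetry supplies the other direction. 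With those two uses made explicit, your argument is exactly the one the paper is invoking.
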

\begin{proof}
We have already shown that $\Omega_k[X,Y]$ is
a symmetric function in the $Y$-variables
in Proposition \ref{prop:xitocsf}, and by the
$X\leftrightarrow Y$ symmetry it is symmetric in both.
The three properties in Proposition \ref{prop:van}
then correspond to the defining properties of 
$\nabla^k$, namely self-adjointness
in the modified Macdonald inner product, triangularity,
and the correct leading term.
\end{proof}
The $X\leftrightarrow Y$ symmetry item is clear from the
symmetry in the definition of $\dinv_k$.
We now turn to the hard part which is
showing triangularity. We first evaluate the plethystic substitution
$Y\mapsto Y(q-1)$ using
\eqref{eq:csf}
and \eqref{eq:xitocsf} to obtain
\begin{equation}
    \label{eq:nabhntv}
\Omega_k[X,Y(q-1)]=(-1)^n
\sum_{\substack{[\mn,\an,\bn] \\
i \text{ $k$-attacks } j\Rightarrow b_i\neq b_j}}  t^{|\mn|}q^{\dinv_k(\mn,\an,\bn)}    
X_{\an}Y_{\bn},
\end{equation}
noting that the automorphism
factors all disappear because we never have
a nonzero term with 
$(m_i,a_i,b_i)=(m_j,a_j,b_j)$ for $i\neq j$.

We would now like to evaluate the substitution
$X\mapsto X(t-1)$. To do this, we must
write equation \eqref{eq:nabhntv} as
a quasi-symmetric function in the $X$-variables.
We first sort the triples in a different
order, so that $[\an,\mn,\bn]'$ is a triple
in which the $a_i$ are in descending order,
the $m_i$ are in increasing order to break
ties, and the $b_i$ are in descending order
to breaking ties. This is the reverse of
the usual order, modified so that $\an$ has
priority over $\mn$.
Define two conditions $\mathrm{W}$ (wrong) and $\mathrm{NW}$ (not wrong) on pairs $\mn,\bn$:
\[\begin{array}{rl}
\mathrm{W}_i: & \text{if}\;m_{i}>m_{i+1}  \;\text{or}\; m_{i}=m_{i+1},\,b_{i}<b_{i+1},\\
\mathrm{NW}_i: & \text{if}\; m_{i}<m_{i+1} \;\text{or}\; m_{i}=m_{i+1},\,b_{i}> b_{i+1}.
\end{array}
\]
We can reconstruct the condition
of when a nonzero
term in \eqref{eq:nabhntv}
must have the inequality $a_i>a_{i+1}$
based on the ordering
$\mn,\bn$, to produce a quasi-symmetric expansion.

For $(\an,\mn,\bn)=[\an,\mn,\bn]'$ 
reverse sorted, we have that
\[\dinv_k([\mn,\an,\bn])=
\vdinv_k(\mn,\bn)=
\sum_{i<j}
\max(d^{i,j}_k(\mn,\bn),0)\]
for every nonzero term in \eqref{eq:nabhntv},
where
\begin{equation}
    \label{eq:vdinvdef}
d_k^{i,j}(\mn,\bn)=
\begin{cases}
k+m_j-m_i+\ddel(b_i>b_j) & m_i>m_j \\
k-1+m_i-m_j+\ddel(b_i<b_j) & m_i\leq m_j
\end{cases}
\end{equation}
and $\ddel$ is the delta function, 1 for
true, 0 for false. 
For any $\mn$ we will say that
$i$ $k$-attacks $j$ if
\[m_i \notin \{m_j-k+1,...,m_j+k\},\]
which is the same as $i'$ and $j'$
attacking each other in $[\mn,\an]$
if $\mn$ is the representative in which
$[\an,\mn]'$ are sorted.

We can now write
\begin{equation}
    \label{eq:hntquasi1}
\Omega_k[X,(q-1)Y]
= (-1)^n\sum_{\substack{\mn, \bn: \\
i \text{ $k$-attacks } j\Rightarrow
b_i\neq b_j}} t^{|\mn|} q^{\vdinv_k(\mn, \bn)} Y_{\bn} \sum_{\substack{a_1\geq \cdots \geq a_n:\\ \mathrm{W}_i\;\Rightarrow\; a_i>a_{i+1}}} X_{\an},
\end{equation}
which is a quasi-symmetric expansion.
We may apply the operator of the
substitution $F[X]\mapsto F[(t-1)X]$
using the standardization approach
from \cite{haglund2005combinatorial} to obtain
\[\Omega_k[(t-1)X,(q-1)Y]
 = \sum_{\substack{\mn, \bn: \\
i \text{ $k$-attacks } j\Rightarrow
b_i\neq b_j}} t^{|\mn|}  q^{\vdinv_k(\mn, \bn)} Y_{\bn}\times\]
\[ \sum_{l=0}^n 
(-t)^l
\sum_{\substack{a_1\leq \cdots \leq a_{n-l}:\\ \mathrm{NW}_i\;\Rightarrow\; a_i<a_{i+1}}} \sum_{\substack{a_{n-l+1}\geq \cdots \geq a_{n}:\\ \mathrm{W}_i\;\Rightarrow\; a_i>a_{i+1}}} X_{\an}=\]
\begin{equation}\label{eq:nabla vanishing1}
 \sum_{\substack{\mn, \bn: \\
i \text{ $k$-attacks } j\Rightarrow
b_i\neq b_j}} t^{|\mn|}  q^{\dinv_k(\mn, \bn)} Y_{\bn} \sum_{l=0}^n (-t)^{l} \sum_{\substack{a_1\leq \cdots \leq a_{n-l}:\\ a_i=a_{i+1}\;\Rightarrow\;\mathrm{W}_i}} \sum_{\substack{a_{n-l+1}\geq \cdots \geq a_{n}:\\ a_i=a_{i+1}\;\Rightarrow\;\mathrm{NW}_i}} X_{\an}.
\end{equation}

Before proving the vanishing, it will be helpful
to write equation \eqref{eq:nabla vanishing1}
in a more convenient form by collecting powers of $t$.
Define the rotation operator $\rho$ on pairs
$(\mn,\bn)$ by
$\rho(\mn, \bn) = (\mn', \bn')$, where 
\begin{equation}
    \label{eq:rotation}
m'_i=m_{i-1},\quad b'_i=b_{i-1},\quad m'_1=m_n+1, \quad b'_1=b_n,
\end{equation}
which satisfies 
\[
\vdinv_k(\rho(\mn, \bn))=\vdinv_k(\mn, \bn),\quad \area(\rho(\mn,\bn))=\area(\mn, \bn)+1,
\]
where $\area(\mn,\bn)=|\mn|$. 
Moreover, for $1\leq i<n-1$, $\mathrm{W}_i$ for $(\mn, \bn)$ is equivalent to $\mathrm{W}_{i+1}$ for $\rho(\mn, \bn)$. 
The triples in \eqref{eq:nabla vanishing1} are 
then bijectively mapped via $\rho^l$ to triples satisfying $m_1,\ldots,m_l\geq 1$, so the
right hand side of \eqref{eq:nabla vanishing1} becomes
\begin{equation}
\label{eq:nabla vanishing 3}
\Omega_k[X(t-1),Y(q-1)]=
\sum_{(l,\an,\mn,\bn)\in \vanset(n,k)}
(-1)^{l}t^{|\mn|}q^{\vdinv_k(\mn,\bn)}
X_{\an} Y_{\bn},
\end{equation}
and the summation set is given in the following definition:
\begin{defn}
Let $\vanset(n,k)$ be the set of all
quadruples $(l,\an,\mn,\bn)$ satisfying
\begin{enumerate}
\item \label{item:ank sorted} The terms $(a_i,m_i,b_i)$ are 
sorted for $l+1 \leq i \leq n$, and in
reverse order
for $1\leq i \leq l$.
    \item \label{item:ank nonzero} $m_i>0$ for $1 \leq i\leq l$.
\item \label{item:ank attacks} If $i$ $k$-attacks $j$, i.e.
$m_i \in \{m_j-k+1,...,m_j+k\}$ 
for $i<j$, then $b_i\neq b_j$.
\end{enumerate}
\label{def:vanset}
\end{defn}
\begin{example}
\label{example:ank}
For instance, we would have
\[A=\left(\begin{array}{c|cc|cccc}
\an&3&2&1&1&2&4\\
\mn&3&1&0&0&0&0\\ 
\bn&1&3&2&4&1&5\end{array}\right)\in \vanset(6,2),\]
where we are drawing a dividing line to indicate that $l=2$.
Below is the table of the contributions (before taking the max with zero)
to $\vdinv_2(\mn,\bn)$:
\[(\vdinv_2^{i,j}(\mn,\bn)) =\left(\begin {array}{rrrrrr} 1&\phantom{-}0&-1&-1&-1&-1\\ -1&1
&2&1&2&1\\ -2&1&1&2&1&2\\ -2&0&1&1&
1&2\\ -2&1&2&2&1&2\\ -2&0&1&1&1&1
\end {array} \right)\]
We see that $\vdinv_2(\mn,\bn)=16$, by adding up the positive entries
above the diagonal.
\end{example}
We now demonstrate the triangularity of equation
\eqref{eq:nabla vanishing 3} by finding an involution
$\invo_k:\vanset(n,k)\rightarrow \vanset(n,k)$ which sends
a quadruple $(l,\an,\mn,\bn)$ to itself, or sends it to
one which cancels it in \eqref{eq:nabla vanishing 3}.
We then show that the set of fixed points are empty
unless the dominance order property is satisfied.
\begin{defn}
\label{def:move}
For any $i$ and any quadruple $(l,\an,\mn,\bn)$, we define
\[\move_i(l,\an,\mn,\bn)=(l',\an',\mn',\bn')\]
where $l'=l-1$ if $i\leq l$ or $l+1$ if $i>l$, and
$(\an',\mn',\bn')$ is the result of inserting $(a_i,m_i,b_i)$
in the unique position on the opposite side of the dividing line
$l$ so that $(l',\an',\mn',\bn')$ is sorted as in 
Condition \ref{item:ank sorted} of Definition \ref{def:vanset}.
\end{defn}
Notice that for any element of $\vanset(n,k)$,
we never have $(a_i,m_i,b_i)=(a_j,m_j,b_j)$ unless $i=j$,
because of condition \ref{item:ank attacks}. We therefore
have a unique permutation $\sigma$
so that $(\an_{\sigma},\mn_{\sigma},\bn_{\sigma})$
is overall sorted, not in reverse order for $i\leq l$.
\begin{defn}
Given $A=(l,\an,\mn,\bn)\in \vanset(n,k)$,
we will say that $i$ is $k$-\emph{movable} if
$\move_i(A)\in \vanset(n,k)$, and
for any $j$ with $\sigma_j<\sigma_i$, we have
$d_k^{i,j}(\mn,\bn),d_k^{j,i}(\mn,\bn)\leq 0$.
\label{def:invo}
Let $\invo_k : \vanset(n,k)\rightarrow \vanset(n,k)$ be the involution
defined by setting
\[\invo_k(A)=\begin{cases}
A & \mbox{no element $i$ is $k$-movable},\\
\move_i(A) & \mbox{$i$ the movable element with smallest
value of $\sigma_i$.}\end{cases}\]
\end{defn}
Then if $A'=\invo_k(A)$ is not a fixed point,
the criteria for being $k$-movable ensures that
\begin{equation}
    \label{eq:dinvo}
    \vdinv_k(\mn',\bn')=\vdinv_k(\mn,\bn),\quad
|\mn'|=|\mn|,\quad
(-1)^l=-(-1)^{l'},
\end{equation}
which turns out to be enough
to cancel terms in \eqref{eq:nabla vanishing 3}.
\begin{example}
\label{example:invo}
Let us compute the involution on the term $A\in \vanset(6,2)$
from Example \ref{example:ank}.
We have that $\sigma=(5,3,1,2,4,6)$.
The smallest element is therefore
$i=3$, which is not moveable because $m_3=0$, so we cannot move it to the left of the dividing line $l=2$ without violating condition \ref{item:ank nonzero} of the definition of $\vanset(n,k)$.
The next smallest values of $i=4,2,5$ cannot be moved because we have
$d_2^{i,j}(\mn,\bn)>0$ or $d_2^{j,i}(\mn,\bn)>0$ 
for some $j$ earlier in the list.
However, $i=1$ is moveable, and we end up with
\[\invo_k(A)=
\left(\begin{array}{c|c|cccccc}
\an& 2& 1&1&2&3&4\\ 
\mn&1&0&0&0&3&0\\
\bn& 3&2&4&1&1&5
\end{array}\right).\]
\end{example}

Let $\vanset'(n,k)$ denote the fixed points of $\invo_k$.
\begin{prop}
\label{prop:dom}
If $\lambda,\mu$ are partitions,
then the set of all $(l,\an,\mn,\bn)\in \vanset'(n,k)$
for which $\alpha(\an),\alpha(\bn)=\lambda,\mu$
is empty unless $\lambda\dleq \mu'$ in the dominance order.
If $\lambda=\mu'$, then it contains a unique element.
\end{prop}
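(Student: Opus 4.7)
The plan is to characterize $\vanset'(n,k)$ by processing the triples $(a_i,m_i,b_i)$ in the order given by $\sigma$, i.e.\ starting from the $\sigma$-minimal triple, and tracking what non-movability forces at each step. Throughout, recall that the triples in an element of $\vanset(n,k)$ are pairwise distinct, by condition \ref{item:ank attacks} of Definition \ref{def:vanset}, so $\sigma$ is unambiguous.

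\textbf{Step 1 (base case).} For the $\sigma$-smallest element $i_1$, the hypothesis on $d_k^{i_1,j}$ and $d_k^{j,i_1}$ with $\sigma_j<\sigma_{i_1}$ is vacuous, so non-movability must come from $\move_{i_1}(A)\notin \vanset(n,k)$. One checks that moving a single triple across the bar cannot by itself violate the $k$-attack/distinct-$b$ condition (since the offending neighbor would already have been forbidden on the original side), leaving only the positivity constraint $m>0$ on the left side as the obstruction. This forces $i_1>l$ and $m_{i_1}=0$.

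\textbf{Step 2 (inductive step).} Having analyzed $i_1,\ldots,i_{r-1}$, consider $i_r$. Non-movability of $i_r$ requires either a $\vanset$ violation after $\move_{i_r}$ (again either $m_{i_r}=0$ forcing $i_r>l$, or an attack-equality with an already-processed neighbor it would cross), or some $d_k^{i_r,j}(\mn,\bn)>0$ or $d_k^{j,i_r}(\mn,\bn)>0$ with $\sigma_j<\sigma_{i_r}$. Using the formula
\[
d_k^{i,j}(\mn,\bn)=\begin{cases}k+m_j-m_i+\delta(b_i>b_j) & m_i>m_j, \\ k-1+m_i-m_j+\delta(b_i<b_j) & m_i\leq m_j,\end{cases}
\]
together with the fact that $(a_{i_r},m_{i_r},b_{i_r})\geq (a_j,m_j,b_j)$ in the overall sort for every previously-placed $j$, case analysis splits into a bounded number of scenarios depending on the sign of $m_{i_r}-m_j$ and the relative $b$-values. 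Each scenario gives a rigid local rule: equal-$a$ blocks must be contiguous in the sort, their $m$-values must form a specific consecutive pattern, and the $b$-values must decrease within each block while being distinct at each $m$-level (forced by the $k$-attack constraint).

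\textbf{Step 3 (dominance).} Aggregate the local rules into a global inequality. For a fixed point, condition \ref{item:ank attacks} and Step 2 imply that for each $a$-value $c$ appearing with multiplicity $\lambda_{(c)}$ (where $(c)$ indexes the block), the associated $b$-values must be pairwise distinct and moreover must fit into $m$-strata governed by $\mn$. Counting, for each $r$, the number of triples with $b_i\leq r$, one obtains the standard inequality on partial sums defining $\lambda\trianglelefteq \mu'$. If the inequality fails at some $r$, the attack-distinctness constraint forces a $b$-repetition within a $k$-attacking pair, contradicting membership in $\vanset(n,k)$. Hence $\vanset'(n,k)$ is empty when $\lambda\ntrianglelefteq\mu'$.

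\textbf{Step 4 (uniqueness at $\lambda=\mu'$).} In the boundary case $\lambda=\mu'$, every inequality in Step~3 is tight, which forces $l=0$, $m_i=0$ for all $i$, $\an$ weakly increasing with $a$-multiplicities given by $\lambda$, and $\bn$ taking each value $j$ exactly $\mu_j=\lambda'_j$ times in the positions forced by the sort. This configuration is easily checked to lie in $\vanset'(n,k)$, giving existence and uniqueness.

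\textbf{Main obstacle.} The hardest part is Step~2 together with its use in Step~3. The two-case formula for $d_k^{i,j}$, combined with the interplay between the reverse-sorted left block and the sorted right block of the partition at $l$, produces many subcases. Turning the resulting local structural rigidity into a clean global partial-sum inequality — in particular, matching exactly the dominance order on $\lambda$ versus $\mu'$ rather than some weaker order — is where the combinatorial content resides and requires the most care.
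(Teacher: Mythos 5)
Your proof is a plan rather than a proof, and the crucial steps are not actually carried out. Step 1 is fine: for the $\sigma$-smallest element $i_1$ the move is just a shift of the dividing line, so only Condition \ref{item:ank nonzero} of Definition \ref{def:vanset} can obstruct it, forcing $m_{i_1}=0$; this matches the first step in the paper's Lemma \ref{lemma:tbound}. But Steps 2 and 3 do not establish what you claim. The structural ``rigid local rules'' you describe (equal-$a$ blocks contiguous, consecutive $m$-patterns, distinct $b$-values within each $a$-block) are overstated: a $b$-value \emph{can} repeat inside an $a$-block provided the corresponding $m$-values are far enough apart that the pair does not $k$-attack, and the $m$-values are not forced into a single consecutive pattern but only bounded above. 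More seriously, the aggregation in Step 3 from ``counting triples with $b_i\leq r$'' to the dominance inequality is not a correct reduction: the partial sums $\lambda_1+\cdots+\lambda_r \leq \mu'_1+\cdots+\mu'_r$ compare the number of cells in the first $r$ rows (grouped by $a$-value) with $\sum_b \min(\mu_b, r)$, and the thing you actually need to bound is the number of \emph{repetitions} of each $b$-value cumulatively over the first $r$ rows, not the number of triples whose $b$-value is at most $r$.

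The paper proceeds differently and more globally. It introduces the diagram $\pardiag(A)$ (filling row $r$ with the pairs $(m_i,b_i)$ for the $r$-th smallest $a$-value) and proves two lemmas. Lemma \ref{lemma:tbound} bounds the $m$-values in row $r$ by $(r-1)k$, using exactly the kind of non-movability argument you sketch in Step 1, but applied to the first element of each row rather than element-by-element in $\sigma$-order. Lemma \ref{lemma:domb} then shows any fixed $b$-value appears at most $r$ times in the first $r$ rows: if it appeared $r+1$ times, the attack-distinctness condition (Condition \ref{item:ank attacks}) forces consecutive sorted $m$-values to differ by at least $k$, pushing the largest above $rk$ and contradicting Lemma \ref{lemma:tbound}. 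Dominance $\lambda\dleq\mu'$ then follows immediately since $\lambda_1+\cdots+\lambda_r$ is the number of cells in the first $r$ rows and each $b$ contributes at most $\min(\mu_b,r)$ cells, whose sum is $\mu'_1+\cdots+\mu'_r$. Uniqueness at $\lambda=\mu'$ comes from tightness in Lemma \ref{lemma:domb} forcing $l=0$ and a specific $\mn$ of the form $(0^{\lambda_1},k^{\lambda_2},2k^{\lambda_3},\ldots)$, not $m_i=0$ for all $i$ as you state in Step 4 (which is only correct for $k=0$ or trivial shapes). You should either carry out the case analysis in Step 2 in full detail and correctly state the cumulative repetition bound, or adopt the diagram-and-lemma structure, which cleanly separates the $m$-bound from the $b$-repetition bound.
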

We will prove it through some lemmas. Let
$\pardiag(A)=\pardiag(\an,\mn,\bn)$ be the 
result of filling the $r$th row of the composition
$\alpha=\alpha(\an)$ with the
pairs $(m_i,b_i)$ for which $a_i=\alpha_r$, in such a way that 
the order is compatible with the reading order. For instance,
\begin{equation}
\label{eq:pardiag ex1}    
\pardiag\left(\begin{array}{c|ccccccc}
\an&2&2&1&2&1&3&1\\
\mn&1&0&0&1&2&1&0\\
\bn&1&1&3&2&3&1&1
\end{array}\right)=
\begin{ytableau}
23&01&03\\
11&12&01\\
11\\
\end{ytableau}
\end{equation}
Notice that the diagram does not depend on the ordering
or on $l$.
\begin{lem}
\label{lemma:tbound}
If $A\in \vanset'(n,k)$, then every $m$-number 
in row $r$ of $\pardiag(A)$ is at most $(r-1)k$.
\end{lem}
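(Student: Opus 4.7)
The plan is to induct on $r$ and exploit the fact that within row $r$ of $\pardiag(A)$ the entries appear in $\sigma$-order sorted by $m$-value decreasing. Let $i^*$ denote the column in row $r$ with smallest $\sigma_{i^*}$; then $m_{i^*}$ is the maximum $m$-value appearing in row $r$, so it suffices to prove $m_{i^*}\le (r-1)k$. We would argue by contradiction, assuming $m_{i^*}>(r-1)k$ and showing $i^*$ itself must be $k$-movable, contradicting $A\in\vanset'(n,k)$.

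For the base case $r=1$ the column $i^*$ is $\sigma$-minimal globally, so the quantifier ``for all $j$ with $\sigma_j<\sigma_{i^*}$'' in Definition~\ref{def:invo} is vacuous, and the only possible obstruction to movability is $\move_{i^*}(A)\notin\vanset(n,k)$; among the defining conditions only Condition~\ref{item:ank nonzero} is nontrivial, and it is secured as soon as $m_{i^*}>0$. For the inductive step ($r\ge 2$), the choice of $i^*$ ensures every $j$ with $\sigma_j<\sigma_{i^*}$ lies in a strictly earlier row (a same-row predecessor in $\sigma$ would have $m_j\geq m_{i^*}$, violating the inductive bound), so by induction $m_j\le (r-2)k$, hence $m_{i^*}-m_j\ge k+1$. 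Plugging into \eqref{eq:vdinvdef} (case $m_{i^*}>m_j$) produces
\[d^{i^*,j}_k(\mn,\bn)\le k+m_j-m_{i^*}+1\le 0,\qquad d^{j,i^*}_k(\mn,\bn)\le k-1+m_j-m_{i^*}+1\le -1,\]
so the earlier $\sigma$-obstructions vanish.

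The main obstacle is checking that $\move_{i^*}(A)$ remains in $\vanset(n,k)$, specifically that Condition~\ref{item:ank attacks} survives the move. Since $\move_{i^*}$ only reshuffles the two sides of the dividing line, pairs $(c_1,c_2)$ disjoint from $i^*$ retain their positional order, so any offending new attacking pair with equal $b$-values must involve $i^*$ and some column $c$ whose relative position to $i^*$ has flipped. A short case analysis of the sort and reverse-sort insertion rules shows that such a flip occurs only when $c$ lies in a row $r'<r$; the inductive hypothesis then gives $m_{i^*}-m_c\ge k+1$, placing $m_c$ outside the attack window $\{m_{i^*}-k+1,\dots,m_{i^*}+k\}$, so no new attack arises. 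Combined with $m_{i^*}>(r-1)k\ge 0$ securing Condition~\ref{item:ank nonzero}, this gives $\move_{i^*}(A)\in\vanset(n,k)$; hence $i^*$ is $k$-movable and the resulting contradiction with $A\in\vanset'(n,k)$ completes the induction.
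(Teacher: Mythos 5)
Your proof is correct and takes essentially the same approach as the paper: in both arguments the key step is that the first (i.e.\ $m$-maximal, $\sigma$-minimal) entry $i^*$ of some row has its $m$-value exceed every $m$-value of strictly smaller $\sigma$ by more than $k$, which forces $d_k^{i^*,j},d_k^{j,i^*}\le 0$ and makes $i^*$ $k$-movable. You package this as a direct induction on the row index $r$ (using $m_j\le(r-2)k$ for earlier rows against $m_{i^*}>(r-1)k$), whereas the paper uses a discrete intermediate-value argument (first entry of row $1$ is $0$, so if row $r$'s first entry exceeds $(r-1)k$, some row's first entry must jump by more than $k$ past everything before it); these are interchangeable. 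The genuine addition in your write-up is the explicit check that $\move_{i^*}(A)$ still satisfies Condition~\ref{item:ank attacks}: you correctly note that only pairs $\{i^*,c\}$ whose positional order flips could create a new attacking pair, that a flip forces $\sigma_c<\sigma_{i^*}$ (hence $c$ lies in an earlier row), and that the gap $m_{i^*}-m_c\ge k+1$ keeps such pairs outside the attack window in either positional order. The paper's proof only mentions Condition~\ref{item:ank nonzero} and leaves this point implicit in the ``big jump'' case, so your extra paragraph tightens the exposition. One small blemish: the parenthetical reason you give for why $\sigma_j<\sigma_{i^*}$ forces $j$ into an earlier row (``would violate the inductive bound'') is circular for row $r$; the claim already follows from $i^*$ being chosen $\sigma$-minimal within its own row, so the appeal to the bound is unnecessary.
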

\begin{proof}
Since the rows of the diagram are decreasing,
it suffices to check the inequality for the first element of each
row. The first element of the first row must be zero,
otherwise the lowest element would be movable by
simply switching the position of the dividing line, which can only
violate Condition \ref{item:ank nonzero} of Definition
\ref{def:vanset}.
If some element is greater than $(r-1)k$,
then there must be a rows whose first entry exceeds
all previous entries by more than $k$, in which case
it is movable according
to \eqref{eq:vdinvdef}.
\end{proof}

\begin{lem}
\label{lemma:domb}
If $A\in \vanset'(n,k)$, then the
same $b$-number can appear in the first $r$ rows of 
$\pardiag(A)$ at most $r$ times. If it appears 
the maximum $r$ times, then they all occur in different
rows, and all occurrences $b_i$ are to the right of the 
dividing line, $i\geq l+1$.
\end{lem}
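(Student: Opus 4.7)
The plan is to bound same-$b$ multiplicities via Condition \ref{item:ank attacks} of Definition \ref{def:vanset} combined with the row-wise $m$-bound of Lemma \ref{lemma:tbound}. Fix a $b$-value $b$, and let $S$ be the set of indices $i$ with $b_i=b$ whose $a_i$ lies among the first $r$ distinct values of $\an$. Unpacking the non-$k$-attacking requirement for any $i<j$ in $S$ yields $m_i-m_j\geq k+1$ or $m_j-m_i\geq k$, so the $m$-values attached to positions in $S$ are pairwise at distance at least $k$. By Lemma \ref{lemma:tbound} these $m$-values lie in $\{0,1,\ldots,(r-1)k\}$, and packing a $k$-separated integer set into an interval of length $(r-1)k$ allows at most $r$ elements; this gives $|S|\leq r$.

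In the tight case $|S|=r$ the packing is rigid, so the $m$-values appearing on $S$ must be exactly $\{0,k,2k,\ldots,(r-1)k\}$. Lemma \ref{lemma:tbound} is then tight row by row: only row $r$ can accommodate $m=(r-1)k$; with that row taken, only row $r-1$ can accommodate $m=(r-2)k$; and inductively each row $r'\leq r$ must host the unique $S$-element with $m$-value $(r'-1)k$, yielding the ``different rows'' claim.

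For the ``all on the right of the dividing line'' claim, the element with $m=0$ satisfies $i\geq l+1$ immediately from Condition \ref{item:ank nonzero} of Definition \ref{def:vanset}. For the positions with $m>0$ I plan to argue by contradiction via the movability machinery: if some $i^*\in S$ with $m_{i^*}>0$ were at position $\leq l$, the rigidity of the $m$-layout together with the attack structure would show that $\move_{i^*}(A)$ still satisfies all three vanset conditions, and the saturated $k$-separation within $S$ would force $\vdinv_k^{i^*,j}(\mn,\bn)\leq 0$ and $\vdinv_k^{j,i^*}(\mn,\bn)\leq 0$ for every $j$ with $\sigma_j<\sigma_{i^*}$, making $i^*$ $k$-movable in the sense of Definition \ref{def:invo} and contradicting $A\in\vanset'(n,k)$.

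The main obstacle is the last step: one must carefully analyze pairs $(i^*,j)$ for $j\in S$ and $j\notin S$ separately, using that the $S$-elements exhaust the allowable $m$-range in each row to rule out any positive contribution to $\vdinv_k$ from the earlier part of the overall sort, and verifying simultaneously that the vanset sort conditions are preserved under $\move_{i^*}$.
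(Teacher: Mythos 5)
Your first step — bounding the multiplicity by $r$ via the $k$-separation of the $m$-values within $\{0,\ldots,(r-1)k\}$ — matches the paper's argument exactly, and you correctly extract the $k$-separation from the non-attacking condition (the paper's equation \eqref{eq:domb xcond}).

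After the rigidity observation, however, your argument diverges and develops a gap, while also missing the paper's key simplification. The paper observes that the asymmetry in \eqref{eq:domb xcond} (``$x_s\leq x_t-k$ or $x_s\geq x_t+k+1$'', for $s<t$ in position order) forces the $m$-values attached to $S$ to appear in \emph{increasing} order along their positions $i_1<\cdots<i_r$, i.e.\ $m_{i_s}=(s-1)k$. This is the crucial fact you omit, and from it everything becomes short: since $m_{i_1}=0$, Condition \ref{item:ank nonzero} forces $i_1\geq l+1$, and then $i_1<i_2<\cdots<i_r$ trivially forces all $i_s\geq l+1$ — no movability needed. Your detour through Definition \ref{def:invo} and the involution machinery is an overcomplication; and, as you yourself admit, the key verifications there are left unresolved.

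The genuine gap is in the ``different rows'' step. You argue ``only row $r$ can accommodate $m=(r-1)k$; with that row taken, only row $r-1$ can accommodate $m=(r-2)k$,'' but Lemma \ref{lemma:tbound} only caps each $m$-value by the row index; it does \emph{not} say a row holds at most one $S$-element. Nothing in that lemma prevents row $r$ from holding both the $(r-1)k$ and the $(r-2)k$ elements, leaving $r-2$ remaining $S$-elements with small $m$-values to be distributed among rows $1,\ldots,r-1$ with no cardinality obstruction. So the induction you describe does not close. The correct argument (the paper's) uses the sort conditions on $\vanset(n,k)$: having already placed all $S$-elements to the right of the dividing line, two $S$-elements in the same row would have equal $a$, equal $b$, and strictly increasing $m$ in position order — which is exactly the wrong order for entries with $i\geq l+1$ under Condition \ref{item:ank sorted}. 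You should replace your cardinality induction with this ordering argument, after first establishing that the $m$-values appear in increasing position order.
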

\begin{proof}
Suppose the number $b$ appears $r+1$ times in rows $1$ through $r$.
Let $x_1,...,x_r$ denote the set of the corresponding values of
$m_i$ in the order they appear in $\mn$, 
for instance $(1,0,1,0)$ for the $b$-value of $1$
in \eqref{eq:pardiag ex1}.
Then we must have that 
\begin{equation}
\label{eq:domb xcond}
x_s \leq x_t-k \mbox{ or } x_s \geq x_t+k+1
\end{equation}
for $1\leq s<t \leq r+1$
by condition \ref{item:ank attacks}
of Definition \ref{def:vanset}.
Now let $0\leq y_1\leq \cdots \leq y_{r+1}$ denote the
same set of numbers as the $x_s$ but in sorted order.
By \eqref{eq:domb xcond} we have that $y_{s+1}\geq y_s+k$,
so that $y_{r+1}\geq rk$, which contradicts Lemma \ref{lemma:tbound}.

To prove the second statement,
define $0\leq y_1\leq \cdots \leq y_r$ as above. 
Then by the same reasoning we have $y_r\geq (r-1)k$
and also $y_r\leq (r-1)k$ by the same lemma, so we must
have $y_s=sk$.
Then only the first case is
possible in \eqref{eq:domb xcond}, and so all the $x_s$
are same order $x_s=y_s$. 
Since $x_1=0$, it must
be to the right of the dividing line because of condition
\ref{item:ank nonzero}, and so the rest are as well.
Then if two $b$-values appear in the same row, there
will be increasing $m$-values for the same $a$-value,
so in the wrong order for the entries to the right of the 
dividing line.
\end{proof}

We now prove Proposition \ref{prop:dom}.
\begin{proof}
The shape of $\pardiag(l,\an,\mn,\bn)$ is just $\lambda$,
and the first statement of Lemma \ref{lemma:domb} easily shows 
that $\lambda\dleq \mu'$. 

For the second statement, if $\lambda=\mu'$, then the 
$i$th lowest $b$-number
appears $\lambda'_i$ times.
By the second statement of Lemma \ref{lemma:domb}, it appears
once in every row up to $\lambda'_i$, to the right of the dividing line, and (by the proof) with corresponding $m$-values $0,k,2k,...$
There is a unique term with these properties, namely
$l=0$ since all elements are to the right, and
\[\an=(1^{\lambda_1},...,l^{\lambda_r}),\ 
\mn=(0^{\lambda_1},...,(rk)^{\lambda_r}),\ 
\bn=(1,...,\lambda_1,...,1,...,\lambda_r).\]
\end{proof}

Finally, we can prove Proposition \ref{prop:van}, and therefore
Theorem \ref{thm:thm1}.
\begin{proof}
The first statement is clear from the symmetry of 
$\dinv_k$ in $\an$ and $\bn$. 
The second statement follows from Proposition
\ref{prop:dom}, since all terms in \eqref{eq:nabla vanishing 3}
corresponding to $A\in \vanset(n,k)-\vanset'(n,k)$
cancel with $\invo_k(A)$ by \eqref{eq:dinvo}.
Finally, the leading term from the proof
is easily seen to 
be $q^{kn(\lambda')}t^{kn(\lambda)}$.
\end{proof}

\subsection{A new proof of the shuffle theorem}

We now show how to recover the shuffle
theorem from Theorem \ref{thm:thm1}.

Notice that if we have $b_i=b_j$ for any $i\neq j$
in \eqref{eq:nabhntv}, then we cannot have $m_i=m_j$, 
or $m_i=m_j+1$ and $a_i>a_j$. 
By the first condition, we can uniquely
sort the orbits so that the $b_i$ 
are sorted in reverse order, 
$b_1\geq \cdots \geq b_n$, and if $b_i=b_{i+1}$ then $m_{i}<m_{i+1}$. 
Then the second condition says:
\[
b_{i}=b_{i+1}\Rightarrow m_{i+1}>m_i+1 \text{ or } m_{i+1}=m_i+1\text{ and } a_{i+1}\leq a_i.
\]
For a pair of sequences $(\mn,\an)$ and a position $i$ we
will define two conditions, ``parking function at $i$,''
and ``not parking function at $i$'', noting
that one is the negation of the other:
\[\begin{array}{rl}
\pfi_{k,i}: & \text{if}\;m_{i+1}\leq m_{i}+k-1 \;\text{or}\; m_{i+1}=m_i+k,\,a_{i+1}>a_i,\\
\npfi_{k,i}: & \text{if}\; m_{i+1}>m_i+k \;\text{or}\; m_{i+1}=m_i+k,\,a_{i+1}\leq a_i.
\end{array}\]

We can now write
\begin{equation}
    \label{eq:hntquasi}
\nabla^k h_n\left[\frac{XY}{1-t}\right] 
= \sum_{\mn, \an} X_{\an} t^{|\mn|}  q^{d_k(\mn, \an)} \sum_{\substack{b_1\geq \cdots \geq b_n:\\ \pfi_{k,i}\;\Rightarrow\; b_i>b_{i+1}}} Y_{\bn},
\end{equation}
where $d_k(\mn,\an)$ is the same one defined in \eqref{eq:vdinvdef}.
We would like to evaluate the substitution
$Y\rightarrow (1-t)Y$.
To do this, notice that \eqref{eq:hntquasi}
is a sum of quasi-symmetric functions in $Y$. 
We can therefore compute the substitution
using the standardization approach
from \cite{haglund2005combinatorial}. The result is
\begin{equation}\label{eq:nabla final}
\nabla^k h_n[XY] = \sum_{\mn,\an} t^{|\mn|}  q^{d_k(\mn, \an)} \sum_{l=0}^n (-t)^l X_{\an} \sum_{\substack{b_1\geq \cdots \geq b_{n-l}:\\ \pfi_{k,i}\;\Rightarrow\; b_i>b_{i+1}}} 
\sum_{\substack{b_{n-l+1}\leq \cdots \leq b_{n}:\\ \npfi_{k,i}\;\Rightarrow\; b_i<b_{i+1}}} Y_{\bn}.
\end{equation}
Finally, let us make the evaluation $Y=-1$ in 
\eqref{eq:nabla final} adding an extra sign $(-1)^n$,
which amounts to counting only the terms in 
which the quasi-symmetric functions have
strict inequalities. We obtain
\begin{equation}\label{eq:shuffle conjecture 1}
\nabla^k e_n[X] = \sum_{l=0}^n (-t)^l \sum_{\substack{\mn, \an:\\\pfi_{k,i} \;\text{for}\; 1\leq i\leq n-l-1,\\\npfi_{k,i} \;\text{for}\; n-l+1\leq i 
\leq n-1}} X_{\an} t^{|\mn|}  q^{d_k(\mn, \an)}.
\end{equation}

We would like to cancel certain terms in the right hand side
of \eqref{eq:shuffle conjecture 1}, this time
using the rotation operator $\rho$ 
defined in \eqref{eq:rotation}.
Let
\[\mathcal{A}(n,k)=\left\{(l,\mn,\an):
\pfi_{k,i} \text{ for $1\leq i\leq n-l-1$},
\right.\]
\[\left.
\npfi_{k,i} \text{ for $n-l+1\leq i \leq n-1$}\right\},\]
and notice that for $1\leq i<n-1$, we have
that $\pfi_{k,i}$ for $(\mn, \an)$ is equivalent to $\pfi_{k,i+1}$ for $\rho(\mn, \an)$. 
Consider those triples 
$(l,\mn,\an)\in \mathcal{A}(n,k)$ satisfying
\begin{enumerate}
	\item[(1A)] $l>0$,
	\item[(2A)] $PF_{k,1}$ for $\rho(\mn, \an)$ if $l<n$.
\end{enumerate}
The image of $\rho$ on these triples is the set of triples $(l,\mn,\an)$ satisfying
\begin{enumerate}
	\item[(1B)] $l<n$,
	\item[(2B)] $NPF_{k,n-1}$ for $\rho^{-1}(\mn, \an)$ if $l>0$,
	\item[(3B)] $m_1>0$.
\end{enumerate}
It is clear that these are the sets of all 
$A\in \mathcal{A}(n,k)$ so that 
$\rho(A)\in \mathcal{A}(n,k)$ for the first set,
or $\rho^{-1}(A) \in \mathcal{A}(n,k)$ for the second.

We can now check the following proposition,
which implies that the two sets have no elements in
common, and so the terms coming
from the two sets cancel each other out in
\eqref{eq:shuffle conjecture 1}.
\begin{prop}
\label{prop:rho}
	The set of triples satisfying (1A), (2A), (1B), (2B), (3B) is empty.
\end{prop}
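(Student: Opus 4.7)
The plan is to show that conditions (2A) and (2B) are logically complementary, so they cannot both hold on any single triple. First I would observe that (1A) combined with (1B) forces $0 < l < n$, which means that the ``if'' clauses guarding (2A) and (2B) are automatically satisfied; both wrap-around conditions therefore become unconditional requirements on $(l,\mn,\an)$.

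Next I would translate each wrap-around condition into an explicit comparison of the boundary data $m_1, m_n, a_1, a_n$, using the definition of the rotation $\rho$ from \eqref{eq:rotation}. For (2A), substituting $m'_1 = m_n + 1$, $m'_2 = m_1$, $a'_1 = a_n$, $a'_2 = a_1$ into the definition of $\pfi_{k,1}$ yields: either $m_1 \leq m_n + k$, or $m_1 = m_n + k + 1$ with $a_1 > a_n$. For (2B), the inverse rotation satisfies $m''_{n-1} = m_n$, $m''_n = m_1 - 1$, $a''_{n-1} = a_n$, $a''_n = a_1$, where the decrement of $m_1$ is legitimate precisely because (3B) guarantees $m_1 \geq 1$; substituting into $\npfi_{k,n-1}$ gives: either $m_1 > m_n + k + 1$, or $m_1 = m_n + k + 1$ with $a_1 \leq a_n$.

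At this point the contradiction is immediate: the two resulting conditions partition the space of quadruples $(m_1, m_n, a_1, a_n)$, so no triple can satisfy both. The proposition follows. The main obstacle in this proof is purely bookkeeping, namely keeping track of the cyclic index shifts under $\rho$ and $\rho^{-1}$ and recognizing the silent role of (3B) in ensuring that $\rho^{-1}$ lands in a valid configuration; once the translations are in place, the cancellation requires no further combinatorial input.
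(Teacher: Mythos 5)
Your proof is correct, and since the paper leaves this proposition to the reader (it is prefaced with ``We can now check the following proposition'' and no proof appears), you have supplied exactly the argument that was intended. The key step---translating (2A) into ``$m_1 \le m_n + k$, or $m_1 = m_n + k + 1$ and $a_1 > a_n$'' and (2B) into ``$m_1 > m_n + k + 1$, or $m_1 = m_n + k + 1$ and $a_1 \le a_n$''---is carried out correctly, and these two conditions are indeed logical negations of each other (using that $m_1, m_n$ are integers), so they cannot simultaneously hold once (1A) and (1B) have discharged the guards.

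One small refinement: condition (3B) is actually not needed for the contradiction at all. If $m_1 = 0$, then the first disjunct of (2A) holds trivially (since $m_n + k \ge 0$) while both disjuncts of (2B) fail outright (since $m_n + k + 1 \ge 1 > 0 = m_1$), so (2A) $\wedge$ (2B) is already unsatisfiable without invoking (3B). The role of (3B) in the surrounding argument is only to characterize the image of $\rho$ as a subset of $\mathcal{A}(n,k)$ (so that $\rho^{-1}$ produces a legal area sequence with nonnegative entries), not to make the emptiness claim go through. Your proof is fine as written, but you can safely drop the remark about (3B) being essential.
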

We let $\mathcal{A}'(n,k)\subset \mathcal{A}(n,k)$ 
denote the subset of contributing terms, 
which are not in either set.
We can now give a new proof of the shuffle theorem \cite{haglund2005combinatorial,carlsson2018proof},
noting that the conditions of the summation 
in \eqref{eq:shuffle conjecture} mean that $\mn$ 
is the area sequence of a Dyck path, and $\an$ is a word parking function, see \cite{haglund2008catalan}.
\begin{thm}
\label{thm:shuffle}
\begin{equation}\label{eq:shuffle conjecture}
\nabla^k e_n[X] = \sum_{\substack{\mn, \an: PF_{k,i} \;\text{for all $i$},\; m_1=0}} X_{\an} t^{|\mn|}  q^{d_k(\mn, \an)}.
\end{equation}
\end{thm}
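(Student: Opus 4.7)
The plan is to begin from the signed-sum identity \eqref{eq:shuffle conjecture 1}, which already expresses $\nabla^k e_n[X]$ as a sum over triples $(l,\mn,\an)\in\mathcal{A}(n,k)$ with weight $(-t)^l t^{|\mn|} X_{\an} q^{d_k(\mn,\an)}$, and to use the rotation operator $\rho$ of \eqref{eq:rotation} to cancel almost all of these contributions. Concretely, I would decompose $\mathcal{A}(n,k)=\mathcal{A}'(n,k)\sqcup B_1\sqcup B_2$, where $B_1$ consists of the triples satisfying (1A) and (2A), $B_2$ of those satisfying (1B), (2B), and (3B), and $\mathcal{A}'(n,k)$ is the remainder. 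By design $\rho$ maps $B_1$ bijectively onto $B_2$, and Proposition \ref{prop:rho} guarantees the two sets are disjoint so that this pairing is well defined.

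The second step is to verify that this bijection is sign-reversing. A direct inspection of how the conditions defining $\mathcal{A}(n,k)$ interact with $\rho$ shows the bijection has the form $(l,\mn,\an)\mapsto(l-1,\rho(\mn,\an))$, so $l$ decreases by one while $|\mn|$ increases by one, since $\rho$ places $m_n+1$ in the first slot. Combined with the invariance $d_k(\rho(\mn,\an))=d_k(\mn,\an)$ noted just above \eqref{eq:rotation} and the cyclic invariance of $X_{\an}$, one gets $(-t)^{l-1}t^{|\mn|+1}=-(-t)^l t^{|\mn|}$, so the contributions from $B_1$ and $B_2$ cancel pairwise in \eqref{eq:shuffle conjecture 1}.

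The final step is to identify $\mathcal{A}'(n,k)$ with the index set of \eqref{eq:shuffle conjecture}. Failure of (1A) forces $l=0$, in which case failure of (3B) forces $m_1=0$, while all of the conditions $PF_{k,i}$ for $1\le i\le n-1$ hold automatically from the definition of $\mathcal{A}(n,k)$ at $l=0$. The remaining range $0<l<n$ must also be excluded, and this amounts to a small case check of the same flavor as Proposition \ref{prop:rho}: failure of (2A), i.e.\ $NPF_{k,1}$ for $\rho(\mn,\an)$, together with failure of (2B) or (3B), i.e.\ $PF_{k,n-1}$ for $\rho^{-1}(\mn,\an)$ or $m_1=0$, produces an immediate contradiction when one writes out the two cases of the $PF$/$NPF$ definitions in terms of $m_1$ and $m_n$. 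The main obstacle of the whole argument is this combinatorial disjointness bookkeeping around $\rho$; once it is carried out the surviving sum is precisely the area-sequence/word-parking-function expression on the right of \eqref{eq:shuffle conjecture}, as in \cite{haglund2008catalan}.
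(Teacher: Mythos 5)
Your proposal is correct and follows essentially the same route as the paper: apply the rotation $\rho$ as a sign-reversing bijection between the triples in $\mathcal{A}(n,k)$ satisfying (1A),(2A) and those satisfying (1B),(2B),(3B), invoke Proposition~\ref{prop:rho} for disjointness, and then show the surviving terms are exactly those with $l=0$ and $m_1=0$. The only cosmetic difference is that you organize the final case analysis by first splitting on whether (1A) fails, whereas the paper begins by ruling out failure of (2A); also note that when $l=n$ the triple automatically lies in $B_1$ (both (1A) and (2A) hold vacuously), which justifies restricting the leftover case check to $0<l<n$ as you do.
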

\begin{proof}
Using Proposition \ref{prop:rho} 
to cancel terms in \eqref{eq:shuffle conjecture 1}, 
the terms that remain
are the ones that fail to satisfy at least one out of (1A) and (2A), and also fail at least one of (1B), (2B), (3B). If a term does not satisfy (2A), it means $l<n$ and $\npfi_{k,1}$ holds for $\rho(\mn,\an)$. In particular, we have $0< m_n+k\leq m_1$, so the only property that can fail among (1B), (2B), (3B) is (2B). 
Therefore, $l>0$ and $\pfi_{k,n-1}$ holds for $\rho^{-1}(\mn,\an)$, which is equivalent to $\pfi_{k,1}$ for $\rho(\mn,\an)$, 
a contradiction. 
Then among (1A), (2A), the property (1A) is the one that fails,
so we have $l=0$. 
The only property among (1B), (2B), (3B) 
that can fail in the case $l=0$ is (3B), so we have $m_1=0$.
\end{proof}

\section{Parabolic bundles}
\label{sec:bundles}
In this Section we deduce Theorem \ref{thm:thm1} by counting parabolic bundles in two different ways. 

\subsection{Counting formula}
On the first side, we will need a result from \cite{mellit2017poincarea} 
for counting bundles on $\BP^1$ over a finite field.
 Let $q$ be a prime power, and let $\Bk$ be the finite field with $|\Bk|=q$ elements. Let $S=\{s_1,\ldots,s_k\}\subset \BP^1(\Bk)$ be a collection of rational points. Let $N$ be a big integer (this will correspond to the number of variables in each alphabet). We need $k$ alphabets $X_1,\ldots,X_k$. The variables in alphabet $X_i$ are denoted $x_{i,j}$ ($1\leq i\leq k$, $1\leq j\leq N$).

\begin{defn}\label{defn:parabolic_bundle}
	A \emph{parabolic bundle} is a pair $(\CE, \bF)$, where $\CE$ is a vector bundle on $\BP^1$ over $\Bk$, and $\bF=(F_{i,j})_{1\leq i\leq k,0\leq j\leq N}$ is a collection of vector spaces so that for each $i$ we have
	\[
	0=F_{i,0}\subseteq F_{i,1} \subseteq \cdots \subseteq F_{i,N-1} \subseteq F_{i,N}=\CE(s_i).
	\]
	An endomorphism of $(\CE, \bF)$ is an endomorphism of $\CE$ preserving each $F_{i,j}$. An endomorphism $\theta$ is nilpotent if $\theta^n=0$ for some $n$.
\end{defn}
Here $\CE(s_i)$ is the fiber of $\CE$ over $s_i$. If $\CE$ had rank $n$, then $\CE(s_i)$ is an $n$-dimensional vector space. 

Parabolic bundles have the following discrete invariants:
\begin{itemize}
	\item $\rank(\CE)$ = rank of $\CE$,
	\item $\deg(\CE)$ = degree of $\CE$,
	\item $r_{i,j}=\dim(F_{i,j}/F_{i,j-1})$.
\end{itemize}
Note that $r_{i,\bullet}$ is a composition of $n$ for each $i=1,\ldots,k$ (of length $N$ with zeros allowed). These invariants are packaged in the following \emph{weight}:
\[
\weight(\CE,\bF) = t^{\deg} \prod_{i=1}^k \prod_{j=1}^N x_{i,j}^{r_{i,j}}.
\]

It is well-known that over $\BP^1$ every vector bundle is a sum of line bundles, so we can write $\CE=O(m_1)\oplus\cdots\oplus O(m_n)$. We write $\CE\geq 0$ if all $m_i\geq 0$. The following formula has been proved in \cite{mellit2017poincarea}:
\begin{equation}\label{eq:main formula}
\Omega = \sum_{(\CE,\bF):\CE\geq 0} \frac{\weight(\CE,\bF)\cdot |\Nilp(\CE,\bF)|}{|\Aut(\CE,\bF)|} = \sum_{n=0}^\infty \sum_{\lambda\vdash n} \frac{\prod_{i=1}^k \tilde H_{\lambda}[X_i;q,t]}{\prod_{a,l} (q^{a+1}-t^l)(q^a-t^{l+1})}.
\end{equation}
The summation goes over the isomorphism classes of pairs $(\CE,\bF)$. Here $\Nilp(\CE,\bF)$ denotes the set of all nilpotent endomorphisms of $\CE$ which preserve $\bF$, and similarly $\Aut(\CE,\bF)$ is the set of automorphisms. Equivalently, one can sum over the isomorphism classes of triples $(\CE,\bF,\theta)$ where $\theta\in\Nilp(\CE,\bF)$, and each summand is $\frac{\weight(\CE,\bF)}{|\Aut(\CE,\bF,\theta)|}$:
\[
\Omega = \sum_{(\CE,\bF,\theta):\CE\geq 0,\,\theta\in\Nilp(\CE,\bF)} \frac{\weight(\CE,\bF)}{|\Aut(\CE,\bF,\theta)|}.
\]

We transform the summation as follows. Assume $s_1=0$ and $s_2=\infty$. Let us run the summation over $\CE$, $\bF^0:=\bF_{1,\bullet}$, $\bF^\infty:=\bF_{2,\bullet}$ and $\theta$ first, and then go over the possible flags $\bF_{3,\bullet}$,\ldots, $\bF_{k,\bullet}$. We obtain
\[
\Omega = \sum_{(\CE,\bF^0,\bF^\infty)} \frac{t^{\deg} \prod_{j=1}^N x_{1,j}^{r_{1,j}} x_{2,j}^{r_{2,j}}}{|\Aut(\CE,\bF^0,\bF^\infty)|}\sum_\theta \sum_{\bF_{3,\bullet},\dots,\bF_{k,\bullet}} \prod_{i=3}^k  \prod_{j=1}^N x_{i,j}^{r_{i,j}}.
\]
We can interpret triple $\CE,\bF^0,\bF^\infty$ as a parabolic bundle with only two marked points $0$, $\infty$, and $\theta$ as its endomorphism. For fixed $\CE, \theta$ the flag $\bF_{i,\bullet}$ runs over all flags in $\CE(s_i)$ preserved by the restriction $\theta(s_i)$ of the endomorphism $\theta$ to $\CE(s_i)$. In particular, the summation and the product can be interchanged: $\sum_{\bF_{3,\bullet},\dots,\bF_{k,\bullet}} \prod_{i=3}^k = \prod_{i=3}^k \sum_{\bF_{i,\bullet}}$. The contribution 
\[
\sum_{\bF_{i,\bullet}} \prod_{j=1}^N x_{i,j}^{r_{i,j}}
\]
clearly depends only on the Jordan form of $\theta(s_i)$ and is given by the corresponding Hall-Littlewood polynomial $\tilde H_{\type \theta(s_i)}[X_i;q,0]$ (see \cite{mellit2017poincarea} for details). The notation $\type \theta(s_i)$ stands for the partition whose conjugate specifies the sizes of the Jordan blocks of $\theta(s_i)$. We obtain the following
\begin{cor}\label{cor:main_identity2}
    The generating function $\Omega$ can be written as follows:
\[   
\Omega = \sum_{n=0}^\infty \sum_{\lambda\vdash n} \frac{\prod_{i=1}^k \tilde H_{\lambda}[X_i;q,t]}{\prod_{a,l} (q^{a+1}-t^l)(q^a-t^{l+1})}
 \]
 \[
 = \sum_{(\CE,\bF^0,\bF^\infty)} \frac{t^{\deg} \prod_{j=1}^N x_{1,j}^{r_{1,j}} x_{2,j}^{r_{2,j}}}{|\Aut(\CE,\bF^0,\bF^\infty)|}\sum_{\theta\in \Nilp(\CE,\bF^0,\bF^\infty)} \prod_{i=3}^k \tilde H_{\type \theta(s_i)}[X_i;q,0],
\]
where the first summation on the right hand side runs over the isomorphism classes of parabolic bundles with marked points $0,\infty$.
\end{cor}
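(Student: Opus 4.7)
The plan is to start from the alternative form of equation \eqref{eq:main formula} in which $\Omega$ is presented as a weighted groupoid sum over isomorphism classes of triples $(\CE,\bF,\theta)$ with $\theta$ nilpotent, and then to reorganize this sum by grouping according to the coarser datum $(\CE,\bF^0,\bF^\infty)$, integrating out both $\theta$ and the flags at the remaining marked points. Since the first equality in the corollary is just the statement of \eqref{eq:main formula} established in \cite{mellit2017poincarea}, the content lies entirely in the second equality.

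The key bookkeeping step is the groupoid-cardinality identity: for each isomorphism class of reduced parabolic bundles $[(\CE,\bF^0,\bF^\infty)]$, the contribution coming from all completions $(\bF_{3,\bullet},\ldots,\bF_{k,\bullet})$ together with $\theta\in\Nilp(\CE,\bF)$ equals
\[
\frac{1}{|\Aut(\CE,\bF^0,\bF^\infty)|}\sum_{\bF_{3,\bullet},\ldots,\bF_{k,\bullet}}\;\sum_{\theta\in\Nilp(\CE,\bF^0,\bF^\infty)\text{ preserving all }\bF_{i,\bullet}}\!\!1,
\]
where the inner sums are over honest (unquotiented) tuples. This is a two-step orbit-stabilizer computation: first replace the sum over $[(\CE,\bF,\theta)]$ with the sum over $[(\CE,\bF)]$ weighted by $|\Nilp(\CE,\bF)|/|\Aut(\CE,\bF)|$, and then replace the sum over $[(\CE,\bF)]$ with the sum over $[(\CE,\bF^0,\bF^\infty)]$ weighted by the number of completions divided by $|\Aut(\CE,\bF^0,\bF^\infty)|$. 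The weight $\weight(\CE,\bF)$ factors cleanly as $t^{\deg}\prod_j x_{1,j}^{r_{1,j}} x_{2,j}^{r_{2,j}}$ times $\prod_{i=3}^k\prod_j x_{i,j}^{r_{i,j}}$, with the first factor depending only on $(\CE,\bF^0,\bF^\infty)$, so it comes outside the inner sums. Next I would swap the order of summation, summing over $\theta$ first and then over the flags $\bF_{i,\bullet}$: this is legal because the constraint ``$\theta$ preserves each $\bF_{i,\bullet}$'' is a conjunction of independent local conditions, one per marked point.

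The final step is the identification
\[
\sum_{\bF_{i,\bullet}\text{ preserved by }\theta(s_i)}\prod_{j=1}^N x_{i,j}^{r_{i,j}} \;=\; \tilde H_{\type\theta(s_i)}[X_i;q,0].
\]
This is the standard fact that the generating function of flags in a finite-dimensional $\Bk$-vector space compatible with a nilpotent endomorphism of Jordan type $\lambda$ is a modified Hall--Littlewood polynomial indexed by $\lambda$; it is precisely the $t=0$ specialization of the Macdonald calculation used in \cite{mellit2017poincarea}. Because the flags at different marked points are chosen independently, the inner sum factorizes as $\prod_{i=3}^k \tilde H_{\type\theta(s_i)}[X_i;q,0]$, yielding the stated formula.

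The main delicate point is the accounting of the automorphism groups when passing to the coarser parabolic data: one needs that the action of $\Aut(\CE,\bF^0,\bF^\infty)$ on the set of completions $(\bF_{3,\bullet},\ldots,\bF_{k,\bullet},\theta)$ has stabilizer at each point equal to $\Aut(\CE,\bF,\theta)$, so that orbit-stabilizer exactly reproduces the original denominator $|\Aut(\CE,\bF,\theta)|$ after summing. Once this is in place, the Hall--Littlewood identification is essentially a citation, and the two displayed expressions for $\Omega$ coincide termwise as series in the $X_i$ and $t$.
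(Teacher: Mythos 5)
Your proposal is correct and follows essentially the same route as the paper's own derivation: rewrite $\Omega$ as a groupoid sum over triples $(\CE,\bF,\theta)$, regroup by the coarse datum $(\CE,\bF^0,\bF^\infty)$ via orbit--stabilizer, swap the $\theta$-sum with the sums over the remaining flags, and cite the identification of the flag generating function with $\tilde H_{\type\theta(s_i)}[X_i;q,0]$. The only cosmetic difference is that you spell out the mass-formula bookkeeping in two steps where the paper does it in one, but this is the same argument.
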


Here we are interested in expressions of the form
\[
\nabla^k h_n\left[-\frac{XY}{(q-1)(t-1)}\right] = \sum_{\lambda\vdash n} \left(q^{n(\lambda')} t^{n(\lambda)}\right)^k \frac{H_{\lambda}[X;q,t]H_{\lambda}[Y;q,t]}{\prod_{a,l} (q^{a+1}-t^l)(q^a-t^{l+1})}.
\]
Recall that
\[
(H_{\lambda}[X;q,t], s_{1^n}) = q^{n(\lambda')} t^{n(\lambda)},
\]
and by setting $t=0$
\[
(H_{\lambda}[X;q,0], s_{1^n}) = \begin{cases} q^{\binom{n}{2}} & \lambda=(n),\\
0 & \text{otherwise}.\end{cases}
\]
Applying $(-,s_{1^n})$ in the alphabets $X_3,\ldots,X_k$ to both sides of Corollary \ref{cor:main_identity2} and replacing $k$ by $k+2$ and relabeling $s_i$ we obtain
\begin{cor}\label{cor:main_identity3} Let $k\geq 0$, and let $\{s_1,\ldots,s_k\}$ be an arbitrary set of points on $\BP(\Bk)$ disjoint from $0$ and $\infty$. We have
\[
q^{k \binom{n}{2}} \sum_{\substack{(\CE,\bF^0,\bF^\infty)\\ \rank(\CE)=n}} \frac{t^{\deg} \prod_{j=1}^N x_j^{r_{1,j}} y_{j}^{r_{2,j}}}{|\Aut(\CE,\bF^0,\bF^\infty)|}|\Nilp_k(\CE,\bF^0,\bF^\infty)| =  \nabla^k h_n\left[-\frac{XY}{(q-1)(t-1)}\right],
\]
where $\Nilp_k(\CE,\bF^0,\bF^\infty)$ denotes the set of nilpotent endomorphisms $\theta$ satisfying $\theta(s_i)=0$ for $i=1,\ldots,k$.
\end{cor}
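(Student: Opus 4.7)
The plan is to derive Corollary \ref{cor:main_identity3} directly from Corollary \ref{cor:main_identity2} by applying the Hall inner product $(-,s_{1^n})$ in a suitable collection of alphabet variables, using the two specializations of $\tilde{H}_\lambda$ recalled just before the statement. First I would invoke Corollary \ref{cor:main_identity2} with $k+2$ marked points, say $\{0,\infty,s_1,\ldots,s_k\}\subset\BP^1(\Bk)$, producing a generating identity in $k+2$ alphabets $X_1,\ldots,X_{k+2}$. I then rename $X_1=X$, $X_2=Y$, and apply $(-,s_{1^n})$ in each of the alphabets $X_3,\ldots,X_{k+2}$, picking out the degree-$n$ part in these variables.

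On the left-hand side of Corollary \ref{cor:main_identity2}, the factor $\tilde{H}_{\lambda}[X_i;q,t]$ for $i\geq 3$ becomes $(\tilde{H}_\lambda[X_i;q,t],s_{1^n})=q^{n(\lambda')}t^{n(\lambda)}$. Collecting the $k$ such factors yields $(q^{n(\lambda')}t^{n(\lambda)})^k$, and the surviving expression is
\[
\sum_{\lambda\vdash n} \left(q^{n(\lambda')}t^{n(\lambda)}\right)^k\frac{\tilde H_\lambda[X;q,t]\,\tilde H_\lambda[Y;q,t]}{\prod_{(a,l)\in\lambda}(q^{a+1}-t^l)(q^a-t^{l+1})},
\]
which by the eigenvalue formula \eqref{eq:nabladef} and the standard Cauchy-type expansion of $h_n[-XY/((q-1)(t-1))]$ in the $\tilde H_\lambda$ basis equals $\nabla^k h_n\left[-\tfrac{XY}{(q-1)(t-1)}\right]$.

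On the right-hand side of Corollary \ref{cor:main_identity2}, the same inner product is applied to $\tilde H_{\type\theta(s_i)}[X_i;q,0]$. By the $t=0$ specialization, this equals $q^{\binom{n}{2}}$ if $\type\theta(s_i)=(n)$ and $0$ otherwise. The condition $\type\theta(s_i)=(n)$ means that all Jordan blocks of $\theta(s_i)$ have size one, i.e.\ $\theta(s_i)=0$. Consequently the sum over $\theta\in\Nilp(\CE,\bF^0,\bF^\infty)$ collapses to a sum over those $\theta$ satisfying $\theta(s_i)=0$ for $i=1,\ldots,k$, each contributing the factor $q^{k\binom{n}{2}}$. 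Recognizing this restricted set as $\Nilp_k(\CE,\bF^0,\bF^\infty)$ and noting that restricting to the degree-$n$ part in $X_3,\ldots,X_{k+2}$ forces $\rank(\CE)=n$, the RHS takes the claimed form.

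There is essentially no obstacle in this derivation beyond the bookkeeping; the only point that requires care is the identification $\type\theta(s_i)=(n)\Leftrightarrow\theta(s_i)=0$, which uses the convention stated in the text that $\type$ is the conjugate of the Jordan block-size partition. Once that is checked, the equality of the two sides follows termwise from Corollary \ref{cor:main_identity2}.
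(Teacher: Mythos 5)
Your derivation is correct and follows exactly the same route as the paper: invoke Corollary \ref{cor:main_identity2} with $k+2$ marked points, apply $(-,s_{1^n})$ in the alphabets $X_3,\ldots,X_{k+2}$ using the two recalled specializations of $\tilde H_\lambda$, and observe that the $t=0$ specialization forces $\type\theta(s_i)=(n)$, i.e.\ $\theta(s_i)=0$, which carves out $\Nilp_k$ and produces the $q^{k\binom n2}$ factor. No gaps; this matches the paper's argument.
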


\subsection{Parabolic bundles with two marked points}
Next we will use an explicit classification of triples $(\CE,\bF^0,\bF^\infty)$ to give an alternative formula for the generating function in Corollary \ref{cor:main_identity3}. The building blocks of the classification will be parabolic bundles of rank $1$, i.e. parabolic line bundles:

\begin{example}
Consider Definition \ref{defn:parabolic_bundle} in the case $\rank(\CE)=1$. Then $\CE(s_i)$ is a vector space of dimension $1$, so the sequence of vector spaces $0=F_{i,0}\subseteq\cdots\subseteq F_{i,N}=\CE(s_i)$ is determined by an integer $j_i$ such that $F_{i,j_i-1}=0$, $F_{i,j_i}\neq 0$. Since we are on $\BP^1$, the line bundle $\CE$ is uniquely determined by its degree $m$. So a parabolic line bundle is uniquely determined by an integer $m$ and a tuple $(j_1, j_2, \ldots, j_k)$, $1\leq j_i\leq N$. In the case $k=2$, we will denote $a=j_1$, $b=j_2$. The corresponding parabolic line bundle is denoted by $O(m;a,b)$.
\end{example}

\begin{prop}\label{prop:decomposition}
    Let $(\CE,\bF^0,\bF^\infty)$ be a parabolic vector bundle of rank $n$ on $\BP^1$ with two marked points. There exists a unique multiset of triples $(m_1,a_1,b_1)$, \ldots, $(m_n,a_n,b_n)$ such that
    \[
    (\CE,\bF^0,\bF^\infty) = \bigoplus_{i=1}^n O(m_n;a_n,b_n).
    \]
\end{prop}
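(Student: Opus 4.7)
The plan is to reduce the proposition to a Krull-Schmidt type statement together with a classification of the indecomposable objects. I would first verify that the category $\mathcal{C}$ of parabolic bundles on $\BP^1$ with two marked points is Krull-Schmidt: it is $\Bk$-linear, the $\Hom$ spaces are finite-dimensional over $\Bk$ (since $\BP^1$ is proper and parabolic constraints cut out linear subspaces of finite-dimensional fibers), and idempotents split (the image and kernel of a parabolic idempotent endomorphism yield a direct sum decomposition of $(\CE,\bF^0,\bF^\infty)$ in $\mathcal{C}$). Hence every object decomposes uniquely as a finite multiset of indecomposables, which already settles the uniqueness claim once indecomposables are identified. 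The rank-one parabolic bundles $O(m;a,b)$ are indecomposable because their endomorphism ring is $\Bk$, so the remaining task is to show that they are the only indecomposable objects.

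For existence, I would induct on $n=\rank\CE$. Given $(\CE,\bF^0,\bF^\infty)$ of rank $n\geq 2$ with $\CE\cong\bigoplus_{i=1}^n O(m_i)$, choose the smallest indices $a,b$ with $F^0_a\neq 0$ and $F^\infty_b\neq 0$, and pick lines $\ell_0\subseteq F^0_a$ and $\ell_\infty\subseteq F^\infty_b$. For an appropriate integer $m$, a dimension count using $H^1$-vanishing on $\BP^1$ produces a nowhere-vanishing section of $\CE\otimes O(-m)$ whose values at $0$ and $\infty$ lie in $\ell_0$ and $\ell_\infty$ respectively; this yields a line sub-bundle $L\cong O(m)\hookrightarrow\CE$ with $L(0)=\ell_0$, $L(\infty)=\ell_\infty$. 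The quotient $\CE/L$ inherits parabolic structures at both marked points, and the resulting short exact sequence in $\mathcal{C}$ splits provided the parabolic $\Ext^1(\CE/L,L)$ vanishes. Applying the induction hypothesis to $\CE/L$ then completes the decomposition into parabolic line bundles.

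The main obstacle is the splitting step: the integer $m$ and the lines $\ell_0,\ell_\infty$ must be chosen to balance two competing constraints — $m$ must be small enough for the evaluation map $H^0(\BP^1,\CE\otimes O(-m))\to\CE(0)\oplus\CE(\infty)$ to surject onto $\ell_0\oplus\ell_\infty$, yet large enough that $\Ext^1_{\mathcal{C}}(\CE/L,L)=0$. A conceptually cleaner route, which I would pursue in a polished write-up, is parabolic Harder-Narasimhan theory: for generic parabolic weights, $(\CE,\bF^0,\bF^\infty)$ decomposes into parabolic-stable summands, and on $\BP^1$ with two marked points all parabolic-stable objects have rank $1$, so the HN filtration collapses to a direct sum of parabolic line bundles. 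Either way, once the splitting is established, induction closes the existence argument and Krull-Schmidt yields uniqueness of the multiset $\{(m_i,a_i,b_i)\}$.
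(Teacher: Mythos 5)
Your Krull--Schmidt observation is a valid alternative route to uniqueness (the paper instead deduces uniqueness by repeatedly splitting off the maximal line subbundle and invoking Proposition~\ref{prop:hom order}), but it shifts the entire burden onto existence, which is exactly where your argument has a genuine gap --- at the splitting step you yourself flag. In your direct construction you fix the jump positions $a,b$ first (the smallest nonzero steps of the flags at $0$ and $\infty$) and then ask for an ``appropriate'' $m$. The paper makes the opposite choice: $L$ is the line subbundle \emph{maximal} in the lexicographic order on $(m,-a,-b)$, so that degree takes priority over jump positions. This priority is what forces the splitting. Supposing $0\to L\to\ol\CE\to\ol\CE'\to 0$ fails to split, the explicit Euler form \eqref{eq:homs} --- and this is where $k=2$ enters essentially --- is used to build a parabolic line bundle $L''>L$ with $\Hom(L'',L')\neq 0$ and $\Ext(L'',L)=0$ for a suitable summand $L'$ of $\ol\CE'$; the vanishing of $\Ext(L'',L)$ lets one lift a nonzero map $L''\to L'\subset\ol\CE'$ to a nonzero map $L''\to\ol\CE$, contradicting the maximality of $L$. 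With your choice of $L$ the degree need not be maximal, so $\Ext(\ol\CE',L)$ can genuinely be nonzero and the sequence may fail to split; there is no contradiction to extract.

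The Harder--Narasimhan route does not rescue this. You assert without proof that for generic weights every parabolic-stable object on $\BP^1$ with two marked points has rank one (plausible, but this is essentially the content of the proposition and must use $k=2$ in an essential way, since the statement already fails for three marked points, as the paper's remark notes), and you assert that the bundle decomposes as a direct sum of its HN/Jordan--H\"older pieces. The latter is the same splitting difficulty in disguise: an HN filtration is a filtration, not a direct sum, and proving that it splits here requires precisely the $\Ext$-vanishing analysis that the paper's maximal-subbundle argument encapsulates.
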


This can be thought of as a generalization of the classical Bruhat decomposition for $GL_n$.
There is a tedious direct proof based on several applications of the standard Bruhat decomposition, but we will use homological algebra instead. The proof will occupy the rest of the section.

Of course, the category of parabolic bundles is not an abelian category, but it can be embedded as a full subcategory into the abelian category of parabolic coherent sheaves, which has global dimension $1$, so all $\Ext^i$ vanish for $i>1$. The Euler form is given by (see \cite{mellit2017poincarea})
\begin{equation}\label{eq:homs}
\dim \Hom(\overline\CE,\overline\CE') - \dim \Ext(\overline\CE,\overline\CE') = \rank\CE \rank\CE'+ \rank\CE \deg \CE' - \rank\CE' \deg \CE
\end{equation}
\[
 - \sum_{i=1}^k \sum_{1\leq j<j'\leq N} r_{i,j}(\overline\CE) r_{i,j'}(\overline\CE').
\]
We denote by $\overline\CE$ the pair $(\CE,\bF)$. In the case $k=2$ we write $(\CE, \bF^0, \bF^\infty)$.

The dimension of $\Hom$ between two parabolic line bundles is given by
\[
\dim \Hom(O(m;j_1,\ldots,j_k), O(m';j_1',\ldots,j_k')) =\max(1+m'-m-\#\{i:j_i<j_i'\}, 0).
\]
By the formula for the Euler form we obtain
\[
\dim \Ext(O(m;j_1,\ldots,j_k), O(m';j_1',\ldots,j_k')) 
=\max(m-m'-1+\#\{i:j_i< j_i'\}, 0).
\]

Introduce a total order on parabolic line bundles in such a way that $O(m;j_1,\ldots,j_k)<O(m';j_1',\ldots,j_k')$ precisely when 
\[
(m,-j_1,\ldots,-j_k)<(m',-j_1',\ldots,-j_k') \;\text{lexicographically}.
	\]
This order clearly satisfies
\begin{prop}\label{prop:hom order}
    For two parabolic line bundles $L, L'$ if $\Hom(L, L')\neq 0$, then $L\leq L'$.
\end{prop}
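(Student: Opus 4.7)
The plan is to read the claim directly off the $\Hom$ dimension formula stated just above the proposition, namely
\[
\dim \Hom(O(m;j_1,\ldots,j_k), O(m';j_1',\ldots,j_k')) = \max(1+m'-m-\#\{i:j_i<j_i'\},0).
\]
So nonvanishing of $\Hom(L,L')$ is equivalent to the single numerical inequality
\[
m' - m \;\geq\; \#\{i : j_i < j_i'\}.
\]
My whole strategy is to trace what this inequality says about the lexicographic comparison of $(m,-j_1,\ldots,-j_k)$ and $(m',-j_1',\ldots,-j_k')$.

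First I would observe that the right-hand side of the displayed inequality is a nonnegative integer, so $m \leq m'$. If the inequality $m < m'$ is strict, then the comparison of the first coordinates already yields $L < L'$, and there is nothing more to prove. The only remaining case is $m = m'$, and here the inequality forces $\#\{i : j_i < j_i'\} = 0$; equivalently $j_i \geq j_i'$ for every $i$, i.e.\ $-j_i \leq -j_i'$ componentwise. Lexicographic comparison of the tails $(-j_1,\ldots,-j_k)$ and $(-j_1',\ldots,-j_k')$ then yields $L \leq L'$, with equality iff all $j_i = j_i'$.

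There is essentially no obstacle here: the statement is a direct unpacking of the $\Hom$ formula combined with the definition of the order on parabolic line bundles. The only thing to be careful about is the sign convention in the order (the $-j_i$ in the tuple), which flips the direction of the inequality between $j_i \geq j_i'$ and the lex comparison; that is what makes the condition $\#\{i:j_i<j_i'\}=0$ give $L \leq L'$ rather than $L \geq L'$. Once this is noted the proof is a two-line case split on whether $m < m'$ or $m = m'$.
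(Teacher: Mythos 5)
Your proof is correct and is exactly the direct unpacking that the paper treats as "clear" (the paper gives no written proof, just asserts the order "clearly satisfies" the proposition). Your two-case split on $m<m'$ versus $m=m'$, together with the observation that componentwise $-j_i\leq -j_i'$ implies lexicographic $\leq$, is the intended argument.
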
	

\begin{proof}[Proof of Proposition \ref{prop:decomposition}]
    Let $k=2$.	We prove the existence first. The proof goes by induction on the rank $n$. The case $n=1$ is clear. Assume $n>1$ and suppose $\ol\CE$ is a parabolic bundle of rank $n$. Let $L\subset \ol\CE$ be the maximal in our order parabolic line subbundle. From the theory of parabolic coherent sheaves, we have a short exact sequence
	\[
	0\to L \to \ol\CE \to \ol\CE'\to 0
	\]
	where $\ol\CE'$ is a parabolic bundle of rank $n-1$.
	By the induction assumption, $\ol\CE'\cong \bigoplus_{l=1}^{n-1} L_l$. Suppose the short exact sequence does not split. Then there exist $l$ such that for $L'=L_l$ we have
	\[
	\Ext(L', L)\neq 0 \;\Rightarrow\; m'-m-1+\#\{i:j_i'<j_i\} \geq 1.
	\]
	Note that since $k=2$ this implies $m'\geq m$.
	Our plan is to construct a bundle $L''$ such that
	\begin{enumerate}
	    \item $\Hom(L'', L')\neq 0$,
	    \item $\Ext(L'', L)=0$,
	    \item $L'' > L$.
	\end{enumerate}
	By the exact sequence
	\[
	\Hom(L'', \ol\CE) \to \Hom(L'', \ol\CE') \to \Ext(L'', L)
	\]
	these conditions would guarantee that any non-zero homomorphism $h\in\Hom(L'', L')\subset \Hom(L'', \CE')$ can be lifted to a non-zero homomorphism $L''\to \ol\CE$, and we would obtain a contradiction with the maximality of $L$.
	
	If $m'\geq m+1$, we pick $L''=O(m+1;N,N)$. This guarantees that $\dim \Hom(L'', L') = m'-m>0$, $\dim \Ext(L'', L) = 0$ and  $L''>L$, so the required conditions are satisfied. 
	
	Otherwise, we must have $m'=m$, $j_1'<j_1$ and $j_2'<j_2$. Picking $L'' = O(m;j_1',j_2)$ (or $O(m;j_1,j_2')$) satisfies $\dim \Hom(L'', L') = 1$, $\dim \Ext(L'', L) = 0$ and $L''> L$. So the existence have been proven.
	
	Note that we have in particular demonstrated that the the maximal line subbundle is a direct summand. By  Proposition \ref{prop:hom order} it must be present in any direct sum decomposition, and by successively splitting away the maximal subbundle we deduce the uniqueness.
\end{proof}

\begin{rem}
For $k>2$ the statement does not hold. For a counter-example for $k=3$, pick trivial bundle of rank $2$ and three lines in general position over the marked points.
\end{rem}

\subsection{Computations}
We are ready to identify all the ingredients in the left hand side of Corollary \ref{cor:main_identity3}. By Proposition \ref{prop:decomposition}, the summation runs over the set of sorted triples $[\mn, \an, \bn]$ (see Section \ref{sec:comb} for combinatorial notations). Each sorted triple corresponds to a direct sum of line bundles $L_i=O(m_i;a_i,b_i)$, which satisfy $L_1\geq \cdots \geq L_n$. Denote 
\[
O(\mn;\an,\bn) = \bigoplus_{i=1}^n O(m_i;a_i,b_i).
\]

\begin{prop}\label{prop:automorphisms}
    Suppose $(\mn,\an,\bn)$ is sorted. The number of automorphisms of $O(\mn;\an,\bn)$ is given by
    \[
    |\Aut(O(\mn;\an,\bn)| = (q-1)^n \aut_q(\mn,\an,\bn) q^{\sum_{i<j} \max(1+m_i-m_j - \delta_{a_j<a_i} - \delta_{b_j<b_i}, 0)}.
    \]
\end{prop}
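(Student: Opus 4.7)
The plan is to view an endomorphism of $O(\mn;\an,\bn)=\bigoplus_i L_i$ (with $L_i=O(m_i;a_i,b_i)$) as a matrix $(f_{ij})$ with $f_{ij}\in\Hom(L_j,L_i)$, and to reduce the count of automorphisms to a product of orders of general linear groups coming from the diagonal blocks and contributions $|\Hom(L_j,L_i)|=q^{\dim\Hom(L_j,L_i)}$ from the strictly above-diagonal entries. The sorting condition means $L_1\geq L_2\geq\cdots\geq L_n$ in the total order on parabolic line bundles introduced just before Proposition \ref{prop:hom order}, so by that proposition $\Hom(L_j,L_i)=0$ whenever $i>j$ and $L_i\neq L_j$. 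Thus every endomorphism is block upper triangular with respect to the grouping of the $L_i$'s by isomorphism class.

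First I would group the summands by isomorphism class, so that $O(\mn;\an,\bn)=\bigoplus_r (L^{(r)})^{\oplus\mu_r}$ where the distinct values $L^{(1)}>L^{(2)}>\cdots$ occur with multiplicities $\mu_r$, forming the partition $\mu=\mu(\mn,\an,\bn)$. The diagonal block at $r$ is a matrix in $M_{\mu_r}(\Hom(L^{(r)},L^{(r)}))=M_{\mu_r}(\Bk)$, since line bundles on $\BP^1$ have scalar endomorphisms. By the standard fact about block upper triangular invertibility, an endomorphism is an automorphism iff each diagonal block lies in $\GL_{\mu_r}(\Bk)$. Hence
\[
|\Aut(O(\mn;\an,\bn))|=\prod_r |\GL_{\mu_r}(\Bk)| \cdot \prod_{i<j,\,L_i\neq L_j} q^{\dim\Hom(L_j,L_i)}.
\]

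Next I would plug in the formula $|\GL_r(\Bk)|=(q-1)^r q^{\binom{r}{2}}[r]_q!$ and the dimension formula stated in \eqref{eq:homs}-adjacent text, namely
\[
\dim\Hom(L_j,L_i)=\max(1+m_i-m_j-\delta_{a_j<a_i}-\delta_{b_j<b_i},0).
\]
Since $\sum_r\mu_r=n$ and $\prod_r[\mu_r]_q!=\aut_q(\mn,\an,\bn)$ by definition, the $(q-1)^r$ and $[r]_q!$ factors assemble exactly into $(q-1)^n\aut_q(\mn,\an,\bn)$. For the $q$-exponent, the diagonal $\GL$-blocks contribute $\sum_r\binom{\mu_r}{2}$, and this is exactly $\sum_{i<j,\,L_i=L_j} 1=\sum_{i<j,\,L_i=L_j}\dim\Hom(L_j,L_i)$ since the dimension formula gives $1$ when $L_i=L_j$. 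Adding the contributions from $i<j,\,L_i\neq L_j$ recombines everything into the single sum $\sum_{i<j}\dim\Hom(L_j,L_i)$, which is the exponent in the stated formula.

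I do not expect any serious obstacle. The only substantive point to justify carefully is the sharp form of Proposition \ref{prop:hom order} that forces block upper-triangularity in the sorted order, together with the invertibility criterion for block upper triangular matrices whose off-diagonal blocks have entries in $\Bk$-vector spaces; these are both routine. The rest is bookkeeping to convert the product of $|\GL_{\mu_r}(\Bk)|$'s into the announced $(q-1)^n\aut_q(\mn,\an,\bn)\,q^{(\cdots)}$ form.
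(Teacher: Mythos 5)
Your proof is correct and follows essentially the same route as the paper's: Proposition \ref{prop:hom order} forces block upper-triangularity in the sorted order, invertibility reduces to the diagonal $\GL_{\mu_r}(\Bk)$-blocks, and the $q$-exponent is reassembled by noting $\dim\Hom(L_i,L_i)=1$ so that the $\binom{\mu_r}{2}$ terms fold into a single sum over all $i<j$. No meaningful deviations from the paper's argument.
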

\begin{proof}
    By Proposition \ref{prop:hom order}, automorphisms are given by block-upper-triangular matrices with block sizes equal to the multiplicities of triples $(m_i,a_i,b_i)$. A block-upper-triangular matrix is invertible precisely if the blocks are. So we obtain that the number of automorphisms is given by
    \[
    |\Aut(O(\mn;\an,\bn)| = \prod_{i<j:\; L_i\neq L_j} q^{\dim \Hom(L_j, L_i)} \times \prod_{i} |GL_{\mu_i}(\Bk)|,
    \]
    where $\mu_1, \mu_2, \ldots$ denote the multiplicities, $\sum_i \mu_i = n$.
    The number of elements of $GL_r(\Bk)$ is given by
    \[
    |GL_r(\Bk)| = q^{\binom{r}{2}} (q-1)^r [r]_q!.
    \]
    Since $\dim \Hom(L_i, L_i)=1$, we have
    \[
    |\Aut(O(\mn;\an,\bn)| = \prod_{i<j} q^{\dim \Hom(L_j, L_i)} (q-1)^n \aut_q(\mn,\an,\bn),
    \]
    and \eqref{eq:homs} gives the formula.
\end{proof}

The following statement is useful
\begin{prop}
    Suppose $\theta$ is an endomorphism of $\ol \CE$ which vanishes in some point $s\in\BP(\Bk)\setminus\{0,\infty\}$. Then $\theta$ is nilpotent.
\end{prop}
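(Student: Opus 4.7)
The plan is to reduce the statement to Cayley--Hamilton together with the fact that $\BP^1$ carries no nonconstant regular functions. Note first that the parabolic flag data $\bF^0,\bF^\infty$ plays no role here: we only need to show that any endomorphism $\theta\in\End(\CE)$ which vanishes at some point $s\in\BP^1(\Bk)$ is nilpotent.

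First, I would observe that the coefficients of the characteristic polynomial $\chi_\theta(\lambda)=\det(\lambda\cdot\id-\theta)$ are well-defined global sections of $\CO_{\BP^1}$. Indeed, they are invariants of the endomorphism (built from traces of exterior powers of $\theta$), so they glue from any local trivialization of $\CE$ into elements of $H^0(\BP^1,\CO_{\BP^1})=\Bk$. Hence each coefficient is a constant.

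Next, I would evaluate these constants at the point $s$. Since $\theta(s)=0$, the characteristic polynomial of $\theta(s)$ is $\lambda^n$, so every nonleading coefficient vanishes at $s$; being constant, they vanish identically. Therefore $\chi_\theta(\lambda)=\lambda^n$ globally, and Cayley--Hamilton for endomorphisms of a locally free sheaf (which one may verify stalkwise, since $\CE$ is locally free) gives $\theta^n=0$, as desired.

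The only real content is the globalization of the characteristic polynomial; there is no genuine obstacle, as Cayley--Hamilton is formal once that is in hand, and the vanishing of global functions on $\BP^1$ is standard. Note that the assumption $s\notin\{0,\infty\}$ used elsewhere in the section is actually not needed for this proposition; any point of $\BP^1(\Bk)$ at which $\theta$ vanishes suffices.
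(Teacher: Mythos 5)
Your argument is correct and, in fact, slightly more careful than the proof given in the paper. Both proofs rest on the same idea: conjugation-invariant functions of $\theta$ give global regular functions on $\BP^1$, hence constants in $\Bk$, and evaluating at $s$ shows these constants vanish. The paper carries this out with $\trace\theta^r$ for $r>0$ and then concludes nilpotence from the vanishing of all power traces. That last step is a Newton's-identities argument, and since $\Bk$ is a finite field it can fail: over $\mathbb{F}_p$ the identity on a rank-$p$ bundle has $\trace\theta^r = p = 0$ for every $r$ yet is not nilpotent. Your version replaces the power traces with the coefficients of the characteristic polynomial (equivalently the traces of the exterior powers $\Lambda^i\theta$), which are likewise global sections of $\CO_{\BP^1}$ and hence constant; evaluating at $s$ forces $\chi_\theta(\lambda)=\lambda^n$ everywhere, and Cayley--Hamilton, which holds over any commutative ring and can be checked stalkwise, gives $\theta^n=0$. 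This is characteristic-free and is the right formulation here. Your side remark that $s\notin\{0,\infty\}$ is not needed for this particular proposition is also accurate: the restriction to $s$ away from the marked points matters elsewhere in the section (so that vanishing at $s$ is a genuinely extra condition beyond the parabolic structure), not for the nilpotence argument itself.
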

\begin{proof}
	For every $r>0$, $\trace \theta^r$ is a global function on $\BP^1$, therefore it is constant. Since it is zero at $s$, it must be zero everywhere. So $\trace \theta^r=0$ for all $r>0$, hence $\theta$ is nilpotent.
\end{proof}

Below we include the case $k=0$ for completeness. We have
\begin{prop}
    Suppose $(\mn,\an,\bn)$ is sorted and $k\geq 0$. We have
    \[
    |\Nilp_k(O(\mn;\an,\bn)| = q^{\sum_{i<j} \max(1-k+m_i-m_j - \delta_{a_j<a_i} - \delta_{b_j<b_i}, 0)} \times
    \begin{cases}
    1 & (k>0)\\
    q^{\sum_{i} \binom{\mu_i}{2}} & (k=0)
    \end{cases},
    \]
    where $\mu=(\mu_1,\ldots,\mu_l)$ are the multiplicities of the triples $(m_i,a_i,b_i)$.
\end{prop}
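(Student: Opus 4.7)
The plan is to decompose any parabolic endomorphism $\theta$ of $O(\mn;\an,\bn)=\bigoplus_{i=1}^n L_i$ (with $L_i=O(m_i;a_i,b_i)$) into its matrix entries $\theta_{ij}\in\Hom(L_j,L_i)$ and then impose the conditions defining $\Nilp_k$. By Proposition~\ref{prop:hom order} together with the sorting convention of Definition~\ref{def:mab}, $\theta_{ij}=0$ unless $i\leq j$, so $\theta$ is block upper-triangular with respect to the grouping by isomorphism class of the $L_i$. For each pair $i\leq j$ the entry $\theta_{ij}$ is a section of a line bundle on $\BP^1$ whose space has dimension $\max(1+m_i-m_j-\delta_{a_j<a_i}-\delta_{b_j<b_i},0)$, by the $\dim\Hom$ formula recalled earlier in the section.

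For $k\geq 1$ I would impose that each $\theta_{ij}$ vanish at the $k$ rational points $s_1,\ldots,s_k\in\BP^1\setminus\{0,\infty\}$. Since vanishing at $k$ generic points reduces the dimension of sections of a line bundle by $k$ (when the difference remains nonnegative), the space of allowed $\theta_{ij}$ has dimension $\max(1-k+m_i-m_j-\delta_{a_j<a_i}-\delta_{b_j<b_i},0)$. The proposition immediately preceding the statement then says every such $\theta$ is automatically nilpotent, so there is no further constraint. It remains to check that the $i\geq j$ terms in the sum contribute zero: the diagonal gives $\max(1-k,0)=0$, and for $i>j$ the sortedness forces $m_i\leq m_j$ with the ties broken by $a_j\leq a_i$ and then $b_j\leq b_i$, so in every case the argument of $\max$ is $\leq 0$ when $k\geq 1$. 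Thus $|\Nilp_k|$ equals $q$ to the claimed sum over $i<j$.

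For $k=0$ no vanishing is imposed, but $\theta$ must be nilpotent. Since $\theta$ is block upper-triangular with diagonal blocks in $\Mat_{\mu_\ell}(\Bk)$ indexed by the multiplicity classes, and the eigenvalues of a block upper-triangular matrix are the union of those of its diagonal blocks, $\theta$ is nilpotent iff each diagonal block is nilpotent, while the strictly-above-diagonal block entries remain free. The classical Fine-Herstein count gives $q^{r(r-1)}=q^{2\binom{r}{2}}$ nilpotent matrices in $\Mat_r(\Bk)$, so the diagonal blocks contribute $q^{2\sum_\ell\binom{\mu_\ell}{2}}$ in total. On the other hand, the $\sum_{i<j}$ exponent in the claimed formula contributes exactly $\binom{\mu_\ell}{2}$ within each block (where $m_i=m_j$, $a_i=a_j$, $b_i=b_j$ force the $\max$ to equal $1$) and $\dim\Hom(L_j,L_i)$ for each cross-block pair. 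Combining the within-block contributions with the extra factor $q^{\sum_\ell\binom{\mu_\ell}{2}}$ reconstructs $q^{\mu_\ell(\mu_\ell-1)}$ per block, while the cross-block terms count the free off-diagonal entries.

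The main obstacle is bookkeeping rather than substance: one must verify carefully that sortedness really annihilates the $i\geq j$ contributions for $k\geq 1$, and, in the $k=0$ case, cleanly separate the exponent into a within-block part (which combines with the external $\binom{\mu_\ell}{2}$ factor to reproduce the nilpotent-matrix count) and a cross-block part (matching the free off-diagonal entries). All the ingredients---the $\dim\Hom$ formula, Proposition~\ref{prop:hom order}, the automatic nilpotence lemma, and the Fine-Herstein count---are already in place.
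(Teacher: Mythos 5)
Your proposal is correct and follows essentially the same route as the paper's proof: both decompose $\theta$ into block-upper-triangular matrix entries $\theta_{ij}\in\Hom(L_j,L_i)$, use the degree-drop by $k$ for sections vanishing at $s_1,\ldots,s_k$ to get the exponent in the $k\geq 1$ case (with the automatic nilpotence lemma disposing of the nilpotence condition), and for $k=0$ invoke the $q^{r^2-r}$ count of nilpotent $r\times r$ matrices per diagonal block, observing that the $\sum_{i<j}$ exponent already supplies $\binom{\mu_\ell}{2}$ per block so an extra $q^{\binom{\mu_\ell}{2}}$ factor is needed.
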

\begin{proof}
    As in the proof of Proposition \ref{prop:automorphisms}, the endomorphisms are given by block-upper-triangular matrices. In the case $k>0$ the blocks are automatically zero. The space of off-diagonal entries in position $(i,j)$ is given by $\Hom(L_j, L_i)$ ($i<j$). This is the space of polynomials of bounded degree. When polynomials are forced to have zeroes at $k$ further points, the dimension drops down by $k$. This completely describes the case $k>0$. For the case $k=0$ we need to count the number of nilpotent matrices in each block. This is given by $q^{r^2-r}$ for a block of size $r\times r$. For each block, the factor $q^{\sum_{i<j} \cdots}$ contains $q^{\binom{r}2}$, so extra factor $q^{\binom{r}2}$ has to be added.
\end{proof}

The remaining pieces of the left hand side of Corollary \ref{cor:main_identity3} are identified as follows:
\[
t^{\deg} = t^{|\mn|},\qquad \prod_{j=1}^N x_j^{r_{1,j}} = X_\an, \qquad \prod_{j=1}^N y_j^{r_{2,j}} = Y_\bn.
\]

\begin{example}
Let $k=0$. The left hand side of Corollary \ref{cor:main_identity3} becomes
\[
\sum_{[\mn,\an,\bn]} \frac{t^{|\mn|} X_\an Y_\bn q^{\sum_{i} \binom{\mu_i}{2}}}{(q-1)^n \aut_q(\mn,\an,\bn)}.
\]
So in each summand each triple $(m,a,b)$ with multiplicity $\mu$ contributes a factor of
\[
\frac{t^{\mu m} x_a^\mu y_b^\mu q^{\binom{\mu}{2}}}{(q-1)^\mu [\mu]_q!}.
\]
Summing over all $n$ we obtain every possible triple with every multiplicity, so the result can be written as an infinite product
\[
\prod_{m=0}^\infty \prod_{a,b=1}^N \sum_{\mu=0}^\infty \frac{t^{\mu m} x_a^\mu y_b^\mu q^{\binom{\mu}{2}}}{(q-1)^\mu [\mu]_q!} = \prod_{m=0}^\infty \prod_{a,b=1}^N \prod_{r=0}^\infty (1-x_a y_b t^m q^r) = \pExp\left[-\frac{X Y}{(1-t)(1-q)}\right],
\]
which matches the right hand side of Corollary \ref{cor:main_identity3}.
\end{example}

Our main conclusion is
\begin{thm}
For $k\geq 1$ we have
\[
\nabla^k h_n\left[-\frac{XY}{(q-1)(t-1)}\right] = \sum_{[\mn,\an,\bn]} \frac{t^{|\mn|} q^{\dinv_k(\mn,\an,\bn)} X_\an Y_\bn}{(q-1)^n \aut_q(\mn,\an,\bn)}.
\]
\end{thm}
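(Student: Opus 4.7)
The plan would be to combine Corollary \ref{cor:main_identity3}, which already identifies $\nabla^k h_n[-XY/((q-1)(t-1))]$ with a sum over isomorphism classes of parabolic bundles on $\BP^1$ of rank $n$ with two marked points, together with Proposition \ref{prop:decomposition} and the two subsequent propositions computing $|\Aut|$ and $|\Nilp_k|$. So the only remaining task would be to evaluate that sum explicitly and recognize it as the combinatorial right-hand side of the theorem.

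First I would use Proposition \ref{prop:decomposition} to re-index the summation by sorted triples $[\mn,\an,\bn]$, via the decomposition $(\CE,\bF^0,\bF^\infty) \cong O(\mn;\an,\bn) = \bigoplus_i O(m_i;a_i,b_i)$. Under this bijection the weight factors translate as $t^{\deg} = t^{|\mn|}$, $\prod_j x_j^{r_{1,j}} = X_\an$, and $\prod_j y_j^{r_{2,j}} = Y_\bn$. I would then substitute the closed formula for $|\Aut(O(\mn;\an,\bn))|$ from Proposition \ref{prop:automorphisms} and for $|\Nilp_k(O(\mn;\an,\bn))|$ (with $k \geq 1$) from the following proposition. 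The factor $(q-1)^n \aut_q(\mn,\an,\bn)$ appearing in $|\Aut|$ exactly matches the desired denominator, while the remaining $q$-powers combine with the prefactor $q^{k\binom{n}{2}}$ into a single exponent
\[
E = k\binom{n}{2} + \sum_{i<j}\bigl[\max(C_{ij}-k,0) - \max(C_{ij},0)\bigr],
\]
where $C_{ij} = 1 + m_i - m_j - \delta(a_i > a_j) - \delta(b_i > b_j)$.

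The main obstacle, and really the only nontrivial step, would be to verify $E = \dinv_k(\mn,\an,\bn)$. Rewriting $k\binom{n}{2} = \sum_{i<j} k$ reduces this to the pairwise identity
\[
k + \max(C_{ij}-k,0) - \max(C_{ij},0) = \max(k - C_{ij}, 0),
\]
which holds whenever $C_{ij} \geq 0$ (case-split on $C_{ij} \geq k$ versus $0 \leq C_{ij} < k$). Nonnegativity of $C_{ij}$ would follow directly from the sorting conventions of Definition \ref{def:mab}: when $m_i = m_j$ both deltas vanish, and when $m_i > m_j$ the gap $m_i - m_j \geq 1$ absorbs the two deltas. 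Finally, $\max(k - C_{ij}, 0)$ is exactly the per-pair term $\dinv_k^{i,j}(\mn,\an,\bn)$ from Definition \ref{defn:dinv}, so summing over $i<j$ assembles the combinatorial $\dinv_k$ statistic. Putting all these pieces together completes the proof.
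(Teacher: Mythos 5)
Your approach is correct and follows the paper's proof essentially line by line: re-index the bundle sum by sorted triples via Proposition \ref{prop:decomposition}, substitute the formulas for $|\Aut|$ and $|\Nilp_k|$, cancel against $q^{k\binom{n}{2}}$, and reduce to the per-pair $q$-exponent identity (the paper works with $c_{i,j}=C_{ij}-1$, a purely cosmetic shift). One small slip in your justification of $C_{ij}\geq 0$: when $m_i=m_j$ the sorting of Definition \ref{def:mab} only forces $\delta(a_i>a_j)=0$, not $\delta(b_i>b_j)=0$ (the latter requires additionally $a_i=a_j$), but even with $\delta(b_i>b_j)=1$ one still has $C_{ij}=1-1=0\geq 0$, so the conclusion is unaffected.
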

\begin{proof}
    In view of the above computations and Corollary \ref{cor:main_identity3}, it remains to match the $q$-degree in each summand with $\dinv_k$. The $q$-degree is given by
    \[
    k \binom{n}{2} + \sum_{i<j} \max(1-k+m_i-m_j - \delta_{a_j<a_i} - \delta_{b_j<b_i}, 0) - \sum_{i<j} \max(1+m_i-m_j - \delta_{a_j<a_i} - \delta_{b_j<b_i}, 0).
    \]
    For each pair $i<j$ let $c_{i,j} = m_i-m_j - \delta_{a_j<a_i} - \delta_{b_j<b_i}$ and note that $c_{i,j}\geq -1$. Then the above sum can be written as
    \[
    \sum_{i<j} \left(k+\max(1-k+c_{i,j},0) - (1+c_{i,j})\right) = \sum_{i<j} \max(k-1-c_{i,j}, 0).
    \]
    Each summand matches the corresponding summand in Definition \ref{defn:dinv}
    \[
    \max(k-1-c_{i,j}, 0) = \dinv_k^{i,j}(\mn, \an, \bn),
    \]
    and therefore the $q$-degree equals $\dinv_k(\mn,\an,\bn)$.
\end{proof}

\bibliographystyle{amsalpha}
\bibliography{refs}

\end{document}